\DeclareFontFamily{OT1}{pzc}{}
\DeclareFontShape{OT1}{pzc}{m}{it}{<-> s * [1.200] pzcmi7t}{}
\DeclareMathAlphabet{\mathpzc}{OT1}{pzc}{m}{it}
\newif\ifPDF
\newcommand{\xRightarrow}[2][]{\ext@arrow 0359\Rightarrowfill@{#1}{#2}}
\theoremstyle{plain}
\newtheorem{theorem}{Theorem}[section]				
\newtheorem{proposition}[theorem]{Proposition}		
\newtheorem{corollary}[theorem]{Corollary}
\newtheorem{lemma}[theorem]{Lemma}
\newtheorem{claim}[theorem]{Claim}
\theoremstyle{definition}
\newcommand{\Mbold}{{\bf M}}
\newcommand{\CBbb}{\mathbb C}
\newcommand{\RBbb}{\mathbb R}
\newcommand{\ZBbb}{\mathbb Z}
\newcommand{\Ccal}{\mathcal C}
\newcommand{\Fcal}{\mathcal F}
\newcommand{\Hcal}{\mathcal H}
\newcommand{\Ncal}{\mathcal N}
\newcommand{\Escr}{\mathscr E}
\newcommand{\Hscr}{\mathscr H}
\newcommand{\Sp}{\mathsf{Sp}}
\DeclareMathOperator{\real}{Re}
\DeclareMathOperator{\tr}{tr}
\DeclareMathOperator{\Det}{Det}
\DeclareMathOperator{\Log}{Log}
\DeclareMathOperator{\DN}{\mathsf{DN}}
\DeclareMathOperator{\I}{\mathscr{I}}
\newcommand{\dbar}{\bar\partial}
\begin{document}

\title[The determinant of the Dirichlet-to-Neumann operator]
{A Meyer-Vietoris formula for the determinant of the Dirichlet-to-Neumann operator 
on Riemann surfaces}

\author[Wentworth]{Richard A. Wentworth}
\address{Department of Mathematics,
   University of Maryland,
   College Park, MD 20742, USA}
\email{raw@umd.edu}
\thanks{The author gratefully acknowledges support from NSF grants
DMS-1906403 and DMS-2204346.}
\subjclass[2020]{Primary: 58J52; Secondary: 32G15, 35P05}
%
\keywords{Dirichlet-to-Neumann operator, Steklov
eigenvalue, determinants of elliptic operators, graph Laplacian}
\dedicatory{Dedicated to Peter Li  on the occasion of his 70th
birthday}

\begin{abstract}
This paper presents a Meyer-Vietoris type gluing formula for a conformal invariant of a 
    Riemannian surface with boundary that is defined by the 
    determinant of the Dirichlet-to-Neumann operator.  
    The formula is used to bound the asymptotics of the invariant under degeneration. 
    It is shown that the associated height function on the moduli space of
    hyperbolic surfaces with geodesic boundary is proper only in genus
    zero.
    Properness implies  a compactness theorem for Steklov isospectral metrics
   in the case of genus zero.
    The formula also provides
    asymptotics for the determinant of 
    the Laplacian with Dirichlet or Neumann  boundary conditions. 
    For the proof, we derive an extension of Kirchhoff's weighted matrix tree
    theorem for graph Laplacians with an external potential.
\end{abstract}
\maketitle

\allowdisplaybreaks

\thispagestyle{empty}

\section{Statement of results}
Let $(M,g)$ be a connected compact oriented Riemannian surface with
nonempty boundary of total length $\ell(\partial M)$.
The \emph{Dirichlet-to-Neumann  operator} $\DN(M)$ is a self-adjoint pseudo-differential
operator of order $1$ in $L^2(\partial M)$, which contains a great deal of
information about $M$. In fact, $\DN(M)$ essentially determines the 
conformal class of the metric $g$, and hence the Riemann
surface structure of $M$ (see \cite{LassasUhlmann:01}, and also
\cite{HenkinMichel:07}). 
The spectrum of $\DN(M)$ consists of the \emph{Steklov eigenvalues}, 
which have been studied by many authors. For a survey, see \cite{GPPS:14},
\cite{GP:17}, or \cite{SteklovSurvey}, and the references therein.

Because $\DN(M)$ is a self-adjoint elliptic operator of positive order,
the formal product of the  Steklov eigenvalues 
can be defined via the   
  zeta-regularized determinant $\Det^\ast(\DN(M))$, according to the
Ray-Singer procedure \cite{RaySinger:73} (the asterisk indicates
that the zero eigenvalue has been omitted).  
Determinants of elliptic operators often  package global information through heat
invariants in a way that reflects the geometry more transparently than the
individual eigenvalues themselves. 
The determinant of the Dirichlet-to-Neumann operator for planar domains 
was first considered
by Edward and Wu \cite{EdwardWu:91}, who showed that for a simply 
connected bounded domain 
$\Omega\subset\CBbb$ with smooth boundary,
$\Det^\ast(\DN(\Omega))=\ell(\partial\Omega)$.
More generally,  
in \cite[Theorem 1.1]{GuillarmouGuillope:07}, Guillarmou and Guillop\'e proved that for any
compact Riemannian surface $M$  with boundary, 
\begin{equation} \label{eqn:I}
\I(M):=\frac{\Det^\ast(\DN(M))}{\ell(\partial M)}
\end{equation}
is a conformal invariant of the metric on $M$. 
Eq.\ \eqref{eqn:I}  therefore defines  a positive, real-valued  function of the 
  underlying biholomorphic equivalence class of the  Riemann surface structure.
  We shall refer to $\I(M)$ as the \emph{DN-invariant} of a Riemann surface
  $M$ with boundary. 

  What kind of information  does the DN-invariant provide about $M$?
To give some idea,   in 
 this paper we prove a Meyer-Vietoris type expression for $\I(M)$ similar to the
 gluing formulas obtained  in \cite{BFK:92, Forman:87, Wentworth:12}.
Using this we show that $\I(M)$ detects the existence of long thin
separating
cylinders in $M$ (or in terms of hyperbolic geometry, short geodesics), at
least when there are relatively few  ``isolated'' components.  
This in turn leads to a compactness theorem for families of  genus zero hyperbolic
surfaces with geodesic boundary and
the same Steklov eigenvalue spectrum. 
We make the observation that the  invariant $\I(M)$ 
 may be expressed as a \emph{ratio} of determinants of
Laplace operators on $M$ with Neumann and Dirichlet conditions (see
Proposition \ref{prop:I-ratio} below), whereas by doubling 
the surface we have an expression for  the \emph{product} of these determinants.
Therefore, combining the asymptotic properties
arising from the Meyer-Vietoris theorem in this paper with the results of
Wolpert for closed hyperbolic surfaces  (see \cite{Wolpert:87}), we obtain
bounds on the asymptotic
behavior of the determinant of the Laplacian with Dirichlet and Neumann
boundary conditions.

Let us briefly explain the gluing formula. 
Suppose $\Gamma\subset M$ is a disjoint collection of oriented closed 
smoothly embedded curves $\gamma:S^1\hookrightarrow M$
not meeting $\partial M$. 
 Let $M_\Gamma$ denote the (possibly disconnected) surface with boundary obtained from $M\setminus \Gamma$ by adjoining two boundary components 
for every component of $\Gamma$.  By $\I(M_\Gamma)$ we mean the 
product of the DN-invariants over the connected components of $M_\Gamma$.
Let $\Ncal(M,\Gamma)$ denote the Neumann jump operator in
$L^2(\Gamma)$ (cf.\ \cite{BFK:92}).
  This is simply the pairwise sum of the  DN operators
associated to $M_\Gamma$ for each component in $\Gamma$. We emphasize that 
in defining $\Ncal(M,\Gamma)$, 
\emph{Dirichlet} conditions are imposed  on $\partial M$.
Let
$\Ncal_A(M,\Gamma)$ the jump operator on $M_\Gamma$ defined in
\cite{Wentworth:12} for
the trivial bundle with trivial framing and
\emph{Alvarez} boundary conditions on $\partial M$.
More precisely, this is defined for a general surface with boundary as follows.
For complex valued functions $\varphi$, let $\varphi'$, $\varphi''$ denote the real and  imaginary parts, respectively.  
Also, let $\imath: \partial M\hookrightarrow M$ be the inclusion and $\ast$ 
the Hodge operator of the metric induced on the boundary.  Then  we define
\begin{equation} \label{eqn:dn}
\DN_A(M)(f,g):=((\ast\imath^\ast(\dbar\varphi))', \varphi'\circ\imath)
\ ,
\end{equation}
where $(f,g)$ are real functions on $\partial M$,  and $\varphi$ is harmonic on $M$ satisfying 
$$(\varphi''\circ\imath , (\ast\imath^\ast(\dbar \varphi))'')=(f,g)\ .$$
In the case of $M_\Gamma$, 
the jump operator $\Ncal_A(M,\Gamma)$ in $L^2(\Gamma)$ is then the pairwise sum of  $\DN_A(M_\Gamma)$ operators as in the case of $\Ncal(M,\Gamma)$.
We emphasize that the harmonic extension is such that $\varphi''$ satisfies Dirichlet boundary conditions on $\partial M$, whereas $\varphi'$ satisfies Neumann conditions.
The operator $\Ncal_A(M,\Gamma)$ plays the role of the Neumann jump operator for mixed Dirichlet-Robin type boundary conditions.
A key difference, however, is that $\Ncal_A(M,\Gamma)$ 
is a pseudo-differential operator of order zero.
Its determinant is defined as in
\cite{FriedlanderGuillemin:08} with the aide of an
auxiliary  pseudo-differential operator $Q$ of order $1$.
 For simplicity, in this paper  we fix a particular $Q$ with
$\zeta_Q(0)=0$ once and for all (cf.\ \eqref{eqn:Q}).  See \cite{Wentworth:12} for more details.

Finally,  associated to $\Gamma$ is a simple graph 
$G_\Gamma$ whose vertices $V(G_\Gamma)$
are the connected components $\{M_i\}$ of
$M\setminus \Gamma$ and whose edges $E(G_\Gamma)$ are the intersections $\partial M_i\cap
\partial M_j$ for distinct $i$ and $j$, 
which correspond to  a union of possibly more than one component of
$\Gamma$.
The metric $g$ on $M$ 
gives a \emph{weight function}  on $G_\Gamma$; 
namely, a map $\omega_{g} : E(G_\Gamma)\to \RBbb^+$ obtained by 
setting $\omega_{g}(ij)$ to be
the sum  total  of the lengths of the components of $\Gamma$ in $\partial M_i\cap
\partial M_j$.
Let $\Delta_{(G_\Gamma,\omega_{g})}$  be the associated weighted graph
Laplacian.\footnote{Graph Laplacians will appear 
throughout the paper. Notation and the relevant results   can be found in Section
\ref{sec:graphs}, which is independent of the other sections of this paper.}
We note in passing that this type of graph Laplacian has previously appeared in the study of
small eigenvalues of Laplace operators on
hyperbolic surfaces (see \cite{SWY:80,DPRS:87, Burger:90}).
 We  now can state the main result.
\begin{theorem} \label{thm:main}
For any compact connected oriented Riemannian surface $M$ with $\partial M\neq \emptyset$, and $\Gamma\subset M$ as above, the following holds:
$$
    \I(M)=\I(M_\Gamma)\left({\det}^\ast\Delta_{(G_\Gamma,\omega_{g})}\right)\frac{(2\cdot 
    \Det^\ast_Q \Ncal_A(M,\Gamma))}{(\Det \Ncal(M,\Gamma))^2} \ . 
$$
\end{theorem}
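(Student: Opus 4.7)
The plan is to derive Theorem~\ref{thm:main} from two ingredients: a reformulation of the DN-invariant in terms of Laplace determinants (Proposition~\ref{prop:I-ratio}), and BFK-type gluing formulas whose zero-mode corrections are controlled by the weighted matrix-tree theorem developed in Section~\ref{sec:graphs}. The first step is to apply Proposition~\ref{prop:I-ratio} to rewrite $\I(M)$ as a ratio of zeta-regularized determinants of the Dirichlet and Neumann Laplacians on $M$, up to explicit length and volume factors, and likewise for each component of $M_\Gamma$. Conformal invariance of $\I$ allows the background metric to be chosen conveniently within its conformal class.

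The cleanest route is via the double $\hat M = M\cup_{\partial M}\bar M$. Decomposing $L^{2}(\hat M)$ into its symmetric and antisymmetric parts under the doubling involution yields the standard identity $\Det^\ast\Delta(\hat M) \sim \Det\Delta_{D}(M)\cdot \Det^\ast\Delta_{N}(M)$, up to explicit geometric factors. Combining this with the previous step writes $\I(M)$ in the form $(\Det\Delta_{D}(M))^{2}/\Det^\ast\Delta(\hat M)$ (or its reciprocal, depending on the direction of Proposition~\ref{prop:I-ratio}) times elementary factors. The classical BFK identity for $\Delta_{D}$ with Dirichlet data preserved on $\partial M$,
\[
\Det\Delta_{D}(M) = \Det\Delta_{D}(M_\Gamma)\cdot\Det\Ncal(M,\Gamma),
\]
then contributes $\Det\Ncal(M,\Gamma)$ with multiplicity two, which accounts for the $(\Det\Ncal(M,\Gamma))^{2}$ factor in the theorem.

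Next I would analyse $\Det^\ast\Delta(\hat M)$ under the BFK formula for the cut $\Gamma\cup\bar\Gamma$. The ensuing Neumann jump operator $\Ncal(\hat M,\Gamma\cup\bar\Gamma)$ on $L^{2}(\Gamma\cup\bar\Gamma)$ commutes with the doubling involution, and so splits into symmetric and antisymmetric summands. A direct computation from the definition \eqref{eqn:dn} identifies the symmetric summand with $\Ncal_{A}(M,\Gamma)$ (harmonic extensions satisfying Neumann conditions on $\partial M$) and the antisymmetric summand with $\Ncal(M,\Gamma)$ (harmonic extensions satisfying Dirichlet conditions on $\partial M$). Passing to determinants, this splits a single jump-operator determinant into the product $\Det\Ncal(M,\Gamma)\cdot\Det^\ast_Q\Ncal_{A}(M,\Gamma)$, which combined with the Dirichlet factor above produces the $2\Det^\ast_Q\Ncal_{A}/(\Det\Ncal)^{2}$ ratio in the theorem; the explicit factor of $2$ tracks the doubling of $\Gamma$ together with the $\zeta_{Q}(0)=0$ normalization of the auxiliary operator $Q$ and the ratio of $\ell(\partial M)$ to boundary lengths in the double.

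The principal technical obstacle will be the zero-mode bookkeeping in the BFK formula on $\hat M$. Each connected component of $\hat M_{\Gamma\cup\bar\Gamma}$ contributes a constant zero mode of the Laplacian, and when these are reassembled along $\Gamma\cup\bar\Gamma$ --- weighted by the component lengths of $\Gamma$ --- the resulting finite-dimensional correction factor must be identified with $\det^\ast\Delta_{(G_\Gamma,\omega_{g})}$. This is precisely where the extension of Kirchhoff's weighted matrix-tree theorem enters: it accommodates the external potential arising from the nontrivial boundary behaviour of the Neumann extensions on each piece, and packages the linear algebra of gluing constants across $\Gamma$ into the stated graph-Laplacian determinant. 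Making this identification rigorous, together with the careful spectral analysis of $\Ncal_{A}(M,\Gamma)$ near its kernel, is the most delicate part of the argument.
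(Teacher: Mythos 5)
Your overall architecture — start from Proposition~\ref{prop:I-ratio}, apply the classical BFK gluing for the Dirichlet Laplacian to produce $(\Det\Ncal(M,\Gamma))^2$, and then control a second gluing to produce the $\Ncal_A$ and graph-Laplacian factors — is in the right spirit, and the Dirichlet step is exactly what the paper does. However, the second gluing step contains a genuine error, and a secondary confusion about where the matrix-tree material is used.

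The error is the claimed identification of the symmetric summand of the Neumann jump operator $\Ncal(\widehat M,\Gamma\cup\bar\Gamma)$ with $\Ncal_A(M,\Gamma)$. That identification cannot be correct for an elementary reason: $\Ncal(\widehat M,\Gamma\cup\bar\Gamma)$ is a pseudo-differential operator of order one (it is a sum of two DN operators with the same principal symbol, so nothing cancels to leading order), and any summand of it under the involution is still of order one; but $\Ncal_A(M,\Gamma)$, as defined in the Introduction via the Alvarez conditions \eqref{eqn:dn}, is explicitly of order zero, which is precisely why the auxiliary operator $Q$ and the $\Det_Q$ determinant are required. Moreover, $\Ncal_A$ is not ``the jump operator with Neumann conditions on $\partial M$'' as your parenthetical suggests: it is built from complex-valued harmonic functions $\varphi$ whose imaginary part $\varphi''$ satisfies Dirichlet conditions while the real part $\varphi'$ satisfies Neumann conditions, a genuinely mixed (Robin-type) problem. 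What the symmetric summand of $\Ncal(\widehat M,\Gamma\cup\bar\Gamma)$ actually gives is the order-one jump operator for the \emph{Neumann} Laplacian on $M$ cut along $\Gamma$, an operator with constants in its kernel; that is a different object, and pursuing your plan with it would not reproduce the stated formula but would instead require a separate (and nontrivial) analysis of that kernel. The paper avoids this by not cutting $\widehat M$ along $\Gamma\cup\bar\Gamma$ at all: it reserves the double purely for the proof of Proposition~\ref{prop:I-ratio}, and handles the cut along $\Gamma$ by invoking a modified form of the gluing theorem for the \emph{product} $\Det\Delta_D\,\Det^\ast\Delta_N$ from \cite[Thm.\ 3.3]{Wentworth:12}, which is exactly the formula in which $\Ncal_A$ occurs naturally.

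Separately, the graph Laplacian factor does not come from Theorem~\ref{thm:kirchhoff} (the matrix-tree theorem with potential). That result is only needed later for the asymptotic estimates in Sections~\ref{sec:asymptotics}--\ref{sec:graphs}. In the proof of Theorem~\ref{thm:main} itself, the factor $\det^\ast\Delta_{(G_\Gamma,\omega_g)}$ arises from Lemma~\ref{lem:psi}, which is an elementary computation of the ratio of two Gram determinants: the $L^2(\Gamma)$ pairing of jumps $\delta\Psi_i$ of locally constant functions across $\Gamma$, against the $L^2(M_\Gamma)$ pairing of the $\Psi_i$ themselves, evaluated on the basis of characteristic functions. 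No external potential enters at this stage. You would do well to locate the zero-mode ratio $\det^\ast(\delta\Psi_i,\delta\Psi_j)_\Gamma/\det(\Psi_i,\Psi_j)_{M_\Gamma}$ explicitly in the Wentworth gluing formula rather than attributing it to a potential term.
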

\noindent
In Section \ref{sec:examples}, we
illustrate Theorem \ref{thm:main} with an explicit computation for the disk
and annulus, and we use 
the result to obtain asymptotic formulas in the case of multiply connected
planar domains (see Theorem \ref{thm:planar}). 

As discussed in \cite{Wentworth:91b,Wentworth:08,Wentworth:12}, 
gluing formulas of the type in Theorem \ref{thm:main} above are 
convenient for computing the asymptotic behavior of determinants. 
In Section \ref{sec:asymptotics}, we use this to prove  bounds on $\I(M)$
for degenerating hyperbolic surfaces. This is contained in
Theorems
\ref{thm:asymptotics} and \ref{thm:higher-genus} below.
As a consequence, following  \cite{OPS:89} we define
the 
 \emph{height function}
on the moduli space of Riemann surfaces with boundary by
\begin{equation*}\label{eqn:height}
    \Hscr(M):=-\log\I(M) \ .
\end{equation*}
Then we have the following 

\begin{theorem} \label{thm:proper}
Fix  integers $g\geq 0$ and $n\geq 1$, where 
 $n\geq 3$ if $g=0$.
    Let $\Mbold(g;b_1,\ldots, b_n)$ denote the moduli space of hyperbolic
    surfaces of genus $g$ and geodesic boundaries of lengths $b_1,\ldots, b_n$.
    Then $\Hscr(M)$ is a proper function on $\Mbold(g;b_1,\ldots, b_n)$
    if and only if $g=0$. 
\end{theorem}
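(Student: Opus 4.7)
My plan is to treat the two directions separately, exploiting the standard characterization of divergence in $\Mbold(g; b_1,\ldots, b_n)$: by Mumford--Mahler compactness together with the collar lemma, a sequence $[M_k]$ leaves every compact subset if and only if some simple closed geodesic in $M_k$ has length $\ell_k \to 0$ (the boundary lengths being fixed, no other type of degeneration is possible). After passing to a subsequence I may assume the topological type of the pinching multi-curve $\Gamma_k$ is constant, and I will first analyze a single component $\gamma_k \subset \Gamma_k$ of minimal length. The gluing formula of Theorem~\ref{thm:main} then turns the question into an analysis of three factors, the decisive one being the weighted graph Laplacian ${\det}^\ast \Delta_{(G_{\gamma_k},\omega_g)}$.

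For the implication $g=0 \Rightarrow \Hscr$ proper: in a planar surface every simple closed curve separates, so $M_k \setminus \gamma_k$ has exactly two components and $G_{\gamma_k}$ is a single edge of weight $\ell_k$. Its weighted Laplacian has spectrum $\{0, 2\ell_k\}$, hence
\[
{\det}^\ast \Delta_{(G_{\gamma_k}, \omega_g)} = 2\ell_k \longrightarrow 0.
\]
Theorem~\ref{thm:main} then forces $\I(M_k) \to 0$ provided the remaining factor $\I(M_{k,\gamma_k}) \cdot 2\,\Det^\ast_Q\Ncal_A(M_k,\gamma_k) / (\Det\Ncal(M_k,\gamma_k))^2$ stays bounded above, which is precisely the controlled degeneration statement of Theorem~\ref{thm:asymptotics}. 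Hence $\Hscr(M_k) \to +\infty$, establishing properness. When several geodesics pinch simultaneously, the dual graph is still a tree (every simple curve in a planar surface separates), and the weighted matrix-tree theorem gives ${\det}^\ast \Delta_{(G_{\Gamma_k},\omega_g)} = \#V(G_{\Gamma_k}) \cdot \prod_e \omega_g(e)$, which still tends to zero because at least one weight does.

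For the converse, I show $g \geq 1 \Rightarrow \Hscr$ is not proper by exhibiting a divergent sequence along which $\Hscr$ stays bounded. Since $g \geq 1$, non-separating simple closed geodesics exist, and using Fenchel--Nielsen coordinates for a pants decomposition containing such a curve $\gamma$, I choose $[M_k]$ whose $\gamma$-length $\ell_k$ tends to $0$ while all other coordinate lengths stay uniformly away from $0$ and $\infty$. Then $M_{k,\gamma_k}$ is connected, $G_{\gamma_k}$ is a single vertex with no edges, and
\[
{\det}^\ast \Delta_{(G_{\gamma_k}, \omega_g)} = 1 \quad \text{(empty product).}
\]
Theorem~\ref{thm:main} collapses to
\[
\I(M_k) \;=\; \I(M_{k,\gamma_k}) \cdot \frac{2\,\Det^\ast_Q\Ncal_A(M_k,\gamma_k)}{(\Det\Ncal(M_k,\gamma_k))^2},
\]
and the two-sided bounds supplied by Theorem~\ref{thm:higher-genus} keep the right-hand side trapped between positive constants. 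Therefore $|\Hscr(M_k)|$ is bounded along a divergent sequence, so $\Hscr$ fails to be proper.

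The principal technical obstacle, already settled by Theorems~\ref{thm:asymptotics} and~\ref{thm:higher-genus}, is the fine asymptotic analysis of $\I(M_\gamma)$ and of the Neumann-jump determinants $\Det\Ncal(M,\gamma)$ and $\Det^\ast_Q\Ncal_A(M,\gamma)$ as the pinching geodesic collapses into a thin collar. All of the topological content of Theorem~\ref{thm:proper} is then encoded in the weighted graph Laplacian factor: the vanishing weight $2\ell_k \to 0$ in the separating case, versus the trivial empty product $1$ in the non-separating case. This algebraic dichotomy, married to the analytic control of the remaining factors, is what separates the genus-zero case from all higher genera.
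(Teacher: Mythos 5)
Your account of the genus-zero direction is broadly aligned with the paper's strategy, but the genus $g\geq 1$ half of the argument contains a genuine error, and the genus-zero half glosses over the point that makes the argument delicate.

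For $g\geq 1$, pinching a \emph{non-separating} geodesic $\gamma$ does not produce a divergent sequence with bounded height. When $\gamma$ is non-separating, $M\setminus A_\gamma$ is a single connected component meeting $\partial M$, so there is no isolated component and hence no small eigenvalue of $\Ncal(M,\widetilde\Gamma)$ or $\Ncal_A(M,\widetilde\Gamma)$: the operators converge in trace class on all of $L^2(\widetilde\Gamma)$, and $\Det\Ncal$ stays bounded above and below. Meanwhile $\I(A_\gamma)\sim\ell(\gamma)\to 0$, and with nothing in the gluing formula to cancel this factor, one gets $\I(M)\sim\ell(\gamma)\to 0$, i.e.\ $\Hscr(M)\to+\infty$. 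So your sequence is actually \emph{compatible} with properness. You also misread Theorem~\ref{thm:higher-genus}: it supplies only a one-sided (lower) bound on $\I(M)$, not the two-sided control you invoke, and that lower bound itself goes to zero. The counterexample the paper uses is of a different topological type: a \emph{separating} geodesic $\gamma_\varepsilon$ cutting off an \emph{isolated} component $N_\varepsilon$ of genus one with no boundary. Then the jump operators have a small eigenvalue $\sim\ell(\gamma_\varepsilon)$ coming from the constant function on the isolated side, which sits in $(\Det\Ncal)^{-2}$ and $\Det^\ast_Q\Ncal_A$ in exactly the right way to cancel the vanishing $\I(A_{\gamma_\varepsilon})$, leaving $\I(M_\varepsilon)$ bounded away from $0$ and $\infty$. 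The presence of an isolated component is essential, and it requires $g\geq 1$: in genus zero an isolated piece would need at least three boundary curves of $\Gamma$, which forces it to be an interior vertex of the tree $G_\Gamma$, not a leaf.

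For $g=0$, your reduction to the vanishing of $\det^\ast\Delta_{(G_\Gamma,\omega_g)}$ is not by itself conclusive, because the companion factor $\Det^\ast_Q\Ncal_A/(\Det\Ncal)^2$ is \emph{not} bounded: every isolated component of $M_{\widetilde\Gamma}$ contributes a small eigenvalue to $\Ncal$, so $(\Det\Ncal)^{-2}$ can blow up in concert with $\det^\ast\Delta\to 0$. The cancellation between these two effects is precisely what Theorem~\ref{thm:asymptotics} packages, yielding the two-sided estimate $\I(M)\asymp \prod_\gamma\ell(\gamma)/\det(\Delta_{(G_\Gamma,\omega_g)}+D)$ with the potential $D$ supported on \emph{nonisolated} components. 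The topological input one then needs is that the tree $G_\Gamma$ has at least two leaves, each of which, having only a single $\Gamma$-edge, must have at least two components of $\partial M$ on its boundary and is therefore nonisolated. Hence the number $k$ of nonisolated components touching $\partial M$ satisfies $k\geq 2$, and Corollary~\ref{cor:estimate} gives $\I(M_j)\leq C\,\max\{\ell(\gamma_1)\cdots\ell(\gamma_{k-1})\}\to 0$. Your statement that ``the remaining factor stays bounded above'' should be replaced by this chain of reasoning.
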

\noindent
The above is analogous to the result of Osgood-Phillips-Sarnak and 
Khuri on flat surfaces for the
height function associated to the Laplacian with Dirichlet boundary
conditions \cite{OPS:89,Khuri:91}.

It is conjectured that nonequivalent surfaces 
with the same Steklov spectrum are rare (see
\cite{Jollivet:14}). Unfortunately, the height $\Hscr(M)$ does not seem to provide
much information on this question in higher genus. In genus zero, however, 
as in \cite{OPS:89} we can draw the following consequence.
\begin{corollary}\label{cor:steklov}
    Fix a surface $M$ of genus zero with 
    at least three
    boundary components. 
Let $\Fcal$ be a family of hyperbolic metrics on $M$ with geodesic
    boundary 
    that are mutually Steklov isospectral. 
    Then $\Fcal$ is precompact in the $C^\infty$
    topology. 
\end{corollary}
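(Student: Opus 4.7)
The plan is to combine Theorem \ref{thm:proper} with standard spectral-invariance results for the Dirichlet-to-Neumann operator. The height $\Hscr(M)=-\log\I(M)$ is itself a Steklov spectral invariant: the zeta-regularized determinant $\Det^\ast(\DN(M))$ is defined purely from the nonzero Steklov eigenvalues, while the total boundary length $\ell(\partial M)$ is recovered from the leading Weyl asymptotic $\lambda_k\sim \pi k/\ell(\partial M)$ (equivalently, from the leading heat-trace coefficient of $\DN(M)$). In particular, $\Hscr$ is constant along the isospectral family $\Fcal$.

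To apply Theorem \ref{thm:proper}, I need all members of $\Fcal$ to lie in a single moduli space. The number of boundary components $n$ is topological and hence fixed. The individual geodesic boundary lengths $(b_1,\dots,b_n)$ are likewise Steklov spectral invariants, by the theorem of Girouard-Parnovski-Polterovich-Sher extracting the length spectrum of $\partial M$ from the heat-trace asymptotics of $\DN(M)$ on the boundary circles. Consequently every metric in $\Fcal$ corresponds to a point in the same $\Mbold(0;b_1,\dots,b_n)$. Theorem \ref{thm:proper} in the $g=0$ case asserts that $\Hscr$ is proper on this moduli space, and since $\Hscr$ is constant on $\Fcal$, its image there lies in a compact subset.

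It remains to upgrade moduli-space compactness to $C^\infty$ precompactness on $M$. Since $\Mbold(0;b_1,\dots,b_n)$ is a finite-dimensional smooth manifold parametrizing isometry classes of hyperbolic metrics with geodesic boundary of the prescribed lengths, and the uniformizing metrics depend smoothly on the moduli parameters, a compact subset of $\Mbold(0;b_1,\dots,b_n)$ lifts, after pulling back by suitable diffeomorphisms of $M$, to a $C^\infty$-compact family of hyperbolic metrics on $M$. The main obstacle I anticipate is the invocation of individual boundary-length spectral invariance: without it, $\Fcal$ could a priori straddle different strata $\Mbold(0;b_1,\dots,b_n)$ constrained only by $b_1+\cdots+b_n=\ell(\partial M)$, and one would need to rule out an individual $b_i$ degenerating to $0$ or $\infty$ by separate means. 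With this ingredient in hand, the corollary drops out of Theorem \ref{thm:proper}.
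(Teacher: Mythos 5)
Your proposal is correct and follows essentially the same route as the paper: invoke the Girouard–Parnovski–Polterovich–Sher result to fix the individual boundary lengths $(b_1,\ldots,b_n)$ (the paper cites \cite[Thm.\ 1.7]{GPPS:14}), observe that $\Hscr$ is a Steklov spectral invariant and therefore constant on $\Fcal$, apply the properness from Theorem \ref{thm:proper} to confine $\Fcal$ to a compact subset of $\Mbold(0;b_1,\ldots,b_n)$, and then upgrade to $C^\infty$ precompactness via the smooth dependence of hyperbolic metrics on moduli (the paper simply cites \cite{OPS:89} for this last step). You supply some useful detail the paper elides, namely the explicit reason $\ell(\partial M)$ and hence $\Hscr$ is spectrally determined (the leading Weyl term), but the argument is the same.
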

\noindent
A compactness theorem for Steklov isospectral simply connected planar domains has been proven
in \cite{Edward:93} and \cite{Jollivet:18}.

  We also obtain asymptotic results for  
determinants of Laplace operators  on $M$. 
For a hyperbolic surface $M$ with nonempty geodesic boundary,
let $[\Det \Delta_D]_M$ and $[\Det^\ast\Delta_N]_M$ denote the zeta-regularized
determinants of the Laplace operators $\Delta_D$ and $\Delta_N$ on $M$ with  
 Dirichlet and Neumann boundary    conditions on $\partial M$,
respectively. We set $\{\kappa_i\}$ to be the collection of all eigenvalues for
\emph{both}
Dirichlet and Neumann problems on $M$ that satisfy $0<\kappa_i<1/4$. 
As a consequence
of Theorem \ref{thm:asymptotics} and work of Wolpert on the asymptotic behavior of the
Selberg zeta function \cite{Wolpert:87},
in Corollary \ref{cor:asymptotics} below we give bounds on the asymptotic
behavior of $[\Det \Delta_D]_M$ and $[\Det^\ast\Delta_N]_M$. 
In particular, we have the following. 

\begin{corollary} \label{cor:dirichlet-height}
Let $\Hscr_{D}(M):=-\log
    [\Det\Delta_{D}]_M$
denote the height function for the  determinant of the Laplacian with
    Dirichlet boundary conditions on $\partial M$.
Then $\Hscr_{D}(M)$ is a proper function on 
$\Mbold(g;b_1,\ldots, b_n)$.
\end{corollary}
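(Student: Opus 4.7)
The plan is to derive Corollary \ref{cor:dirichlet-height} from Theorems \ref{thm:asymptotics} and \ref{thm:higher-genus} combined with Wolpert's asymptotic expansion \cite{Wolpert:87} of the Selberg zeta function for degenerating closed hyperbolic surfaces, via the doubling construction. The first step is to eliminate the Neumann determinant from the two identities
$$
\mathcal{I}(M) \;=\; c_1(\ell)\,\frac{[\Det^{\ast}\Delta_N]_M}{[\Det\Delta_D]_M},\qquad [\Det\Delta]_{\hat{M}} \;=\; c_2(\ell)\,[\Det\Delta_D]_M\,[\Det^{\ast}\Delta_N]_M,
$$
where $\hat{M}$ is the closed hyperbolic double of $M$, the first is Proposition \ref{prop:I-ratio}, and the second is the standard Burghelea--Friedlander--Kappeler doubling formula. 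Since $\ell(\partial M)=\sum b_i$ is fixed on $\Mbold(g;b_1,\ldots,b_n)$, the boundary-length factors $c_1, c_2$ are constants on this moduli space, and one deduces
$$
2\,\mathcal{H}_D(M) \;=\; \mathcal{H}^{\mathrm{cl}}_{\hat{M}}(M)\,-\,\mathcal{H}(M)\,+\,C,
$$
where $\mathcal{H}^{\mathrm{cl}}_{\hat{M}}:=-\log[\Det\Delta]_{\hat{M}}$ is the Osgood--Phillips--Sarnak height of the closed doubled surface and $C\in\RBbb$.

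Next I would control both terms on the right along each degeneration of $\Mbold(g;b_1,\ldots,b_n)$. Such degenerations correspond exactly to pinchings $\ell_i\to 0$ of interior geodesics of $M$ (the boundary lengths being fixed), and by the symmetry of the double each such pinching produces a pair of pinching geodesics of the same length $\ell_i$ in $\hat{M}$. Wolpert's asymptotic \cite{Wolpert:87} then provides an explicit, logarithmically divergent rate for $\mathcal{H}^{\mathrm{cl}}_{\hat{M}}(M)\to+\infty$, in particular recovering the OPS/Khuri \cite{OPS:89,Khuri:91} properness of $\mathcal{H}^{\mathrm{cl}}$ on closed moduli space, while Theorems \ref{thm:asymptotics} and \ref{thm:higher-genus} furnish corresponding logarithmic bounds on $\mathcal{H}(M)$ along the same degenerations.

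The main obstacle is the quantitative rate comparison. In higher genus $\mathcal{H}(M)$ itself is not proper (Theorem \ref{thm:proper}), so a priori it could tend to $-\infty$; what is needed is that Wolpert's contribution to $\mathcal{H}^{\mathrm{cl}}_{\hat{M}}$, which counts each pinching geodesic of $\hat{M}$ separately and therefore doubles the count coming from $M$, strictly dominates any rate of divergence of $\mathcal{H}(M)$ admitted by Theorem \ref{thm:asymptotics}. Once this coefficient bookkeeping is verified, the identity above shows $\mathcal{H}_D(M)\to+\infty$ along every escape to infinity in $\Mbold(g;b_1,\ldots,b_n)$, yielding the desired properness.
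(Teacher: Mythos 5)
Your proposal follows essentially the same route as the paper, which combines the doubling identity $[\Det\Delta_D]_M^2 = [\Det^\ast\Delta]_{\widehat M}\,/\,(A(M)\I(M))$ (a consequence of \eqref{eqn:weisberger} and \eqref{eqn:double}) with Wolpert's asymptotic \eqref{eqn:wolpert} and the lower bound on $\I(M)$ from Theorem \ref{thm:higher-genus} to obtain the explicit bounds in Corollary \ref{cor:asymptotics}, from which properness is immediate. One small correction to your last paragraph: since $\Hscr(M)$ enters your identity $2\Hscr_D = -\log[\Det^\ast\Delta]_{\widehat M} - \Hscr(M) + C$ with a minus sign, a degeneration with $\Hscr(M)\to -\infty$ only helps; the scenario that must be ruled out is $\Hscr(M)\to +\infty$ fast enough to cancel Wolpert's $\sum_\gamma 2\pi^2/3\ell(\gamma)$ contribution, and Theorem \ref{thm:higher-genus} supplies precisely the logarithmic upper bound $\Hscr(M)\leq \log C + \sum_\gamma\log(1/\ell(\gamma))$ that makes the dominance you defer checking automatic.
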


We point out that  this result has already been obtained by Young-Heon Kim.
The estimate found in this paper,
$$
\Hscr_D(M)\geq \sum_{\gamma\in \Gamma} \left(
\frac{\pi^2}{3\ell(\gamma)}+\frac{3}{2}\log\ell(\gamma)\right)+\frac{1}{2}\sum_i
\log(1/\kappa_i)-\log C\ ,
$$
is on the one hand sharper than that in \cite[Thm.\
3.3]{Kim:08}, and in particular it incorporates the small eigenvalues (as suggested
should be possible in
\cite[Rem.\ 3.3]{Kim:08}). In the case of genus zero, we also obtain 
a more precise statement, and an upper bound (see Corollary
\ref{cor:asymptotics}).
On the other
hand, in Corollary \ref{cor:dirichlet-height} the boundary lengths are fixed whereas Kim's result does not
assume this. It may be possible to obtain the
additional terms in \cite[Thm.\ 3.3]{Kim:08} that account for varying
boundary lengths
using the methods here, but we have not pursued this.

Finally, in order to prove 
Theorem \ref{thm:proper} we found it necessary to derive a general
formula for the determinant of a weighted graph Laplacian with a positive
diagonal potential (see Theorem \ref{thm:kirchhoff}). The result is  an
extension of Kirchhoff's weighted matrix tree theorem \cite{Kirchhoff}, and thus  it may be of independent
interest.

\section{The gluing formula} \label{sec:gluing}

\subsection{Preliminary remarks on determinants}
Here we briefly review some facts about determinants of operators. 
For a strictly positive, self-adjoint, elliptic pseudodifferential operator $A$ of
positive order acting  on a Hilbert space $V$ of functions (or sections of
a bundle) on a 
compact manifold $M$, possibly with boundary and elliptic boundary
conditions, the complex power $A^{-s}$ is trace-class for $\real s\gg 0$
(see \cite{Seeley:67,Seeley:69}).
The trace $\zeta_A(s)=\tr A^{-s}$ has a meromorphic continuation to the
plane and is regular at $s=0$. The \emph{zeta-regularized determinant of
$A$} is defined as:
\begin{equation} 
    \label{eqn:determinant}
    \Det A := \exp\left( -\zeta_A'(0)\right)
    \ .
\end{equation}
This is extended to operators with a kernel by defining $\Det^\ast
A$ via the restriction of the trace  to $(\ker A)^\perp$. 

Let $A(\varepsilon)$ be a differential family of such (invertible) operators with
$$A=A(0)\ ,\ B=\frac{d}{d\varepsilon}A(\varepsilon)\bigr|_{\varepsilon=0}\
, $$
and suppose $A^{-1}B$ is trace-class.
Then
\begin{equation} \label{eqn:trace-class}
\frac{d}{d\varepsilon}\log \Det
A(\varepsilon)\biggr|_{\varepsilon=0}=\tr(A^{-1}B)
\ .
\end{equation}

Determinants of more general operators are defined in
\cite{KontsevichVishik:93}. For example,
if $A$ has order zero, then a determinant may be defined as follows (see
\cite{FriedlanderGuillemin:08} for details). 
Define
$$
\Log A :=\frac{i}{2\pi}\int_C dz(\log z)(z-A)^{-1}
\ , 
$$
where $\log$ is the principal branch and the contour $C$ contains the
spectrum of $A$. 
Pick a positive self-adjoint
pseudodifferential operator $Q$ on $M$ of order 1. 
We then define:
\begin{equation} \label{eqn:det-def}
    \log\Det_Q A := f.p.\, \tr(Q^{-s}\Log A)\biggr|_{s=0}
    \ , 
\end{equation}
where ``f.p.'' denotes the finite part of the meromorphic extension.
If $A$ is not positive, by convention we set (see \cite[p.\ 479]{Wentworth:12}):
$$
\log\Det_Q A :=\frac{1}{2}\log\Det_Q(A^2)\ .
$$
The formula for variations \eqref{eqn:trace-class} continues to hold with
this definition of determinant. 

Finally, we need the following result whose proof is straightforward. Suppose that $A$ is of
positive order and acts on $V$.
Let $\pi$ be the orthogonal projection  operator to a finite dimensional subspace $V_0$,
and $V_1=\ker(1-\pi)$.
With respect to the splitting $V=V_0\oplus V_1$, write
$$
A=\left(\begin{matrix} A_0 & B^\dagger \\ B& A_1\end{matrix}\right)\ .
$$
\begin{lemma} \label{lem:subspace-det}
    Assume $A_1$ is invertible. Then
    the operator $A_1^{-s}$ on $V_1$ is trace class for $\real s\gg 0$. 
    The zeta function $\zeta_{A_1}(s)$ has a meromorphic continuation that
    is  regular at $s=0$.
    If $\Det A_1:=\exp(-\zeta_{A_1}'(0))$, then 
    \begin{equation} \label{eqn:det-split} 
    \Det A=\det(A_0-B^\dagger A_1^{-1}B)\Det(A_1)\ .
    \end{equation} 
    Eq.\ \eqref{eqn:det-split} holds for zero-th order operators $A$ as
    well, replacing $\Det$ by $\Det_Q$,  where we assume that $Q$ preserves
    the splitting $V_0\oplus V_1$. 
\end{lemma}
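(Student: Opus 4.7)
The plan is to split the proof into two stages: first establishing the analytic properties (i) and (ii) for $\zeta_{A_1}$, and then deducing the determinant identity \eqref{eqn:det-split} by a variational deformation from the block-diagonal case to $A$.

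For the analytic part, I would exploit that $V_0$ is finite dimensional, so the difference $K := A - (A_0 \oplus A_1)$ (consisting of the off-diagonal blocks with entries $B$ and $B^\dagger$) has finite rank. Standard perturbation theory for the complex powers of $A$ and $A_0 \oplus A_1$, both strictly positive self-adjoint, then shows that $(A_0 \oplus A_1)^{-s} - A^{-s}$ is trace class with entire trace in $s$; in particular $\zeta_{A_0 \oplus A_1}$ has a meromorphic continuation to $\CBbb$ that is regular at $s=0$. Since this zeta function equals $\zeta_{A_0}(s) + \zeta_{A_1}(s)$ and $\zeta_{A_0}$ is entire (finite rank), $\zeta_{A_1}$ inherits the same properties, proving (i) and (ii), and yielding $\Det(A_0 \oplus A_1) = \det A_0 \cdot \Det A_1$.

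For the determinant identity, I would connect $A_0 \oplus A_1$ to $A$ through the smooth family
$$
A(\varepsilon) := \begin{pmatrix} A_0 & \varepsilon B^\dagger \\ \varepsilon B & A_1 \end{pmatrix}, \qquad \varepsilon \in [0,1],
$$
with $A(0) = A_0 \oplus A_1$ and $A(1) = A$. Since $\tfrac{d}{d\varepsilon} A(\varepsilon)$ is finite rank, $A(\varepsilon)^{-1} \tfrac{d}{d\varepsilon} A(\varepsilon)$ is trace class and \eqref{eqn:trace-class} applies. Inverting $A(\varepsilon)$ by the Schur complement in terms of the operator $C(\varepsilon) := A_0 - \varepsilon^2 B^\dagger A_1^{-1} B$ on $V_0$, a short block computation together with the cyclicity of the trace gives
$$
\tr\!\left( A(\varepsilon)^{-1} \frac{dA(\varepsilon)}{d\varepsilon} \right) = -2\varepsilon\, \tr\!\left( C(\varepsilon)^{-1} B^\dagger A_1^{-1} B \right) = \frac{d}{d\varepsilon} \log \det C(\varepsilon).
$$
Integrating from $\varepsilon = 0$ to $1$, using $C(0) = A_0$ and the block-diagonal identity above, produces \eqref{eqn:det-split}.

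One technical point is that $A(\varepsilon)$ (equivalently $C(\varepsilon)$) must remain invertible along the path; positivity of $A$ at $\varepsilon = 1$ and of $A_0 \oplus A_1$ at $\varepsilon = 0$ does not automatically imply this. I would handle it either by noting that noninvertibility can occur only on a discrete set of $\varepsilon$ (extending the identity by analytic continuation across regular intervals) or by complexifying $\varepsilon$ and deforming around degeneracies. For the zeroth-order case with $\Det_Q$, the same variational scheme applies: the hypothesis that $Q$ preserves $V_0 \oplus V_1$ makes $Q^{-s}$ block diagonal, so $\Log A(\varepsilon)$ and $\tr(Q^{-s} \Log A(\varepsilon))$ both decompose compatibly with the splitting, and \eqref{eqn:det-def} is differentiated in exactly the same way. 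I expect the main obstacle to be the analytic-continuation step for $\zeta_{A_1}$ in part (ii): verifying that the finite-rank comparison actually transmits the full meromorphic structure (including regularity at $s = 0$) rather than merely some coarser identity.
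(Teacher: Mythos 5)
Your proof is correct and, since the paper declares the lemma's proof ``straightforward'' without supplying one, I can only assess the argument on its own merits: the two-stage strategy (finite-rank comparison of $\zeta_{A_1}$ with $\zeta_A$, then a one-parameter deformation $A(\varepsilon)$ together with the Schur-complement block computation) is exactly the natural route, and the trace identity $\tr(A(\varepsilon)^{-1}\dot A(\varepsilon)) = -2\varepsilon\,\tr(C(\varepsilon)^{-1}B^\dagger A_1^{-1}B) = \tfrac{d}{d\varepsilon}\log\det C(\varepsilon)$ is computed correctly.

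The ``technical point'' you raise about invertibility along the path is, however, a non-issue and your proposed workaround (analytic continuation across degeneracies, or complexifying $\varepsilon$) is an unnecessary complication. Observe that
$$
A(\varepsilon) \;=\; (1-\varepsilon)\begin{pmatrix} A_0 & 0 \\ 0 & A_1 \end{pmatrix} + \varepsilon \begin{pmatrix} A_0 & B^\dagger \\ B & A_1 \end{pmatrix} \;=\; (1-\varepsilon)(A_0\oplus A_1) + \varepsilon A,
$$
a convex combination of the two strictly positive operators $A_0\oplus A_1$ (strictly positive since $A_0$ and $A_1$ are compressions of the positive operator $A$, and $A_1$ is assumed invertible) and $A$. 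Hence $\langle A(\varepsilon)v,v\rangle \geq \min(c_0,c_1)\|v\|^2$ uniformly on $[0,1]$, where $c_0,c_1>0$ are the respective lower bounds; in particular $A(\varepsilon)$ and its Schur complement $C(\varepsilon)$ remain invertible on the whole interval, and the integration from $\varepsilon=0$ to $\varepsilon=1$ is legitimate as written. With this observation inserted, your argument is complete. One further small remark: in the analytic step, note that one only needs the trace of the difference $A^{-s}-(A_0\oplus A_1)^{-s}$ to be analytic near $s=0$ (finite rank of $B$, $B^\dagger$ already gives $O(|\lambda|^{-2})$ decay of the trace of $(\lambda-A)^{-1}K(\lambda-A')^{-1}$ along the contour, hence analyticity for $\real s > -1$), so the ``full meromorphic structure'' you flag as the main worry is more than what is actually required.
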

%
%
%
%

\subsection{Proof of Theorem \ref{thm:main}} \label{sec:main}
Let us return to the context of the Introduction, where $M$ is a compact
Riemannian surface with nonempty boundary.
We begin by giving a different expression for the invariant
$\I(M)$.

\begin{proposition} \label{prop:I-ratio}
Let $A(M)$ denote the area of $M$ and $\kappa_g$ the geodesic
curvature of $\partial M$.  Then
\begin{equation} \label{eqn:weisberger}
\I(M)=\frac{\Det^\ast \Delta_N}{A(M)\Det
\Delta_D}\exp\left(-\frac{1}{2\pi}\int_{\partial M}\kappa_g
ds\right)\ .
\end{equation}
\end{proposition}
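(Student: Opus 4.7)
The strategy is to reduce the identity to a BFK-type gluing formula on the orientation-reversing double of $M$, using conformal invariance to eliminate the boundary geodesic-curvature term.

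First, I would verify that both sides of \eqref{eqn:weisberger} are conformal invariants of $g$. The left-hand side is conformally invariant by Guillarmou--Guillop\'e's Theorem~1.1, cited in the introduction. For the right-hand side, I would invoke the Polyakov--Alvarez conformal anomaly formulas for $\log \Det \Delta_D$ and $\log \Det^{*} \Delta_N$ under $g \mapsto e^{2\sigma} g$, and check that the resulting bulk and boundary terms cancel when combined with the variations of $\log A(M)$ and $\frac{1}{2\pi}\int_{\partial M}\kappa_g\,ds$ (the latter controlled by the Gauss--Bonnet theorem). This is a routine but somewhat tedious calculation, crucially using that the Polyakov--Alvarez boundary contributions for Dirichlet and Neumann conditions differ by precisely the geodesic-curvature anomaly.

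Given conformal invariance, I may assume $g$ has totally geodesic boundary, so $\kappa_g \equiv 0$ and the exponential factor disappears. The identity to prove reduces to
$$
\Det^{*}\DN(M) \;=\; \frac{\ell(\partial M)\,\Det^{*}\Delta_N}{A(M)\,\Det\Delta_D}.
$$
I would establish this by passing to the orientation-reversing double $\widetilde{M} = M \cup_{\partial M} M$. Since $\partial M$ is totally geodesic, $\widetilde{M}$ is smooth and carries an isometric $\mathbb{Z}/2$ action, and the $\pm 1$-eigenspace decomposition $L^2(\widetilde{M}) = L^2_{+} \oplus L^2_{-}$ identifies $\Delta_{\widetilde{M}}|_{L^2_{+}}$ with $\Delta_N$ on $M$ (even functions across $\partial M$ have vanishing normal derivative) and $\Delta_{\widetilde{M}}|_{L^2_{-}}$ with $\Delta_D$ on $M$. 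This yields $\Det^{*}\Delta_{\widetilde{M}} = \Det\Delta_D\cdot\Det^{*}\Delta_N$ and $A(\widetilde{M}) = 2A(M)$. Applying the BFK gluing formula \cite{BFK:92} along the closed separating hypersurface $\partial M \subset \widetilde{M}$, with the Neumann jump operator $\mathcal{R} = 2\DN(M)$, expresses $\Det^{*}\Delta_{\widetilde{M}}$ in terms of $(\Det\Delta_D)^2$, the area $A(\widetilde{M})$, and $\Det^{*}\DN(M)$. Rearranging and dividing by $\ell(\partial M)$ produces the required formula.

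The main obstacle I anticipate is tracking all the zeta-regularization constants: in particular, the contribution $2^{\zeta_{\DN}(0)}$ coming from scaling the jump operator by $2$, the $L^2$-normalization factors for the constant kernel vectors on $\widetilde M$, $M$, and $\partial M$, and the interplay of their dimensions in the BFK formula. A careful Seeley-coefficient computation for $\DN$ on the one-dimensional boundary is needed to ensure these auxiliary factors collapse correctly, and the conformal-anomaly step of the reduction must be aligned with them so that no residual constant survives. Once these normalizations are reconciled, the doubling argument closes the proof.
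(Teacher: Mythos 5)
Your proposal follows essentially the same route as the paper: reduce to geodesic boundary via Polyakov--Alvarez conformal invariance, double along $\partial M$, decompose the spectrum of the doubled surface by the $\mathbb{Z}/2$ isometry to get $\Det^\ast\Delta_{\widehat M}=\Det\Delta_D\cdot\Det^\ast\Delta_N$, and then apply the BFK gluing formula along $\partial M$ with jump operator $2\,\DN(M)$. The one normalization you flag as needing care, the scaling factor from $\Det^\ast(2\,\DN(M))$, is resolved in the paper simply by citing $\zeta_{\DN(M)}(0)=-1$ (Guillarmou--Guillop\'e, Wentworth), which gives $\Det^\ast(2\,\DN(M))=\tfrac12\Det^\ast\DN(M)$.
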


\begin{proof}
By the Polyakov-Alvarez formula \cite{Alvarez:83}, the right hand side of
    \eqref{eqn:weisberger} is conformally invariant (see \cite[eqs.\ (4)
    and (5)]{Weisberger:87}).  Hence, it
suffices to prove \eqref{eqn:weisberger} in the case of geodesic
boundary.  Let $\widehat M$ be the double of $M$ along
$\partial M$.  Then by decomposing the spectrum with respect to the
    isometric involution on $\widehat M$, we have
\begin{equation} \label{eqn:double}
\left[\Det^\ast \Delta\right]_{\widehat M}=\left[ \Det
\Delta_D\Det^\ast\Delta_N\right]_{M}
\ .
\end{equation}
On the other hand, by \cite[Theorem B*]{BFK:92}, cutting $\widehat M$ along
$\partial M$ gives
    \begin{equation} \label{eqn:bfk}
[\Det^\ast \Delta]_{\widehat M}=
 \left[\Det \Delta_D\right]^2_{M} \frac{A(\widehat M)}{\ell(\partial
M)}\Det^\ast(2\DN(M))
\ .
\end{equation}
Now $A(\widehat M)=2A(M)$,  and since
$\zeta_{\DN(M)}(0)=-1$ (cf.\ \cite{GuillarmouGuillope:07, Wentworth:08}),
    we have
$$\Det^\ast(2\DN(M))=\frac{1}{2}\Det^\ast(\DN(M))\ .$$ The result
    now follows from \eqref{eqn:double}, \eqref{eqn:bfk}, and the definition
    \eqref{eqn:I}.
\end{proof}

Let $\Gamma\subset M$ be as in the Introduction, and recall
that $\Gamma$ is assumed to be oriented. Hence, the boundary
components of $M_\Gamma$ are signed, depending upon whether the
orientations induced from the outward normals agree with those of $\Gamma$.
This allows us to define a map $\delta_\Gamma$ from functions on
$\partial M_\Gamma$ to functions on $\Gamma$ by taking the difference of
the values on the two sheets of the double cover $\partial M_\Gamma\to
\Gamma$.  With this understood, we have the following.

\begin{lemma} \label{lem:psi}
    Let $M_i$, $i=1,\ldots, p$, denote the connected components of
    $M_\Gamma$. 
Let $\{\Psi_j\}_{j=1}^p$ be any basis of  locally constant functions
on $M_\Gamma$.
Then
$$
\frac{\det^\ast(\delta\Psi_i,\delta\Psi_j)_\Gamma}{\det(\Psi_i,
\Psi_j)_{M_\Gamma}}
    = \frac{{\det}^\ast \Delta_{(G_\Gamma,\omega_{g})}
}{\prod_{i=0}^p
A(M_i)}\ .
    $$
\end{lemma}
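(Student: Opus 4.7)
The plan is to work in the canonical basis $\Psi_i=\mathbf{1}_{M_i}$ of characteristic functions of the connected components of $M_\Gamma$ and identify both Gram matrices explicitly. In this basis the area pairing is diagonal, $(\Psi_i,\Psi_j)_{M_\Gamma}=A(M_i)\delta_{ij}$, so the denominator is immediately $\det(\Psi_i,\Psi_j)_{M_\Gamma}=\prod_i A(M_i)$.

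The key step is to show that the matrix $(\delta\Psi_i,\delta\Psi_j)_\Gamma$ is precisely the matrix of the weighted graph Laplacian $\Delta_{(G_\Gamma,\omega_g)}$. Fix a component $\gamma\subset\Gamma$ and let $M_a,M_b$ be the (possibly equal) components of $M_\Gamma$ containing the two sheets of the double cover $\partial M_\Gamma\to\Gamma$ above $\gamma$. Then on $\gamma$ one has $\delta\Psi_i=\delta_{ia}-\delta_{ib}$. If $a=b$, then $\delta\Psi_i|_\gamma\equiv 0$ and $\gamma$ contributes nothing, consistent with the fact that $G_\Gamma$ records only edges between distinct components. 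If $a\neq b$, then the pointwise product $(\delta\Psi_i\cdot\delta\Psi_j)|_\gamma$ equals $+1$ when $i=j\in\{a,b\}$, equals $-1$ when $\{i,j\}=\{a,b\}$, and vanishes otherwise; crucially the value $-1$ is independent of the choice of sheet-labelling, since swapping the labelling flips both factors simultaneously. Integrating over $\gamma$ and summing over all components of $\Gamma$, the resulting matrix has diagonal entries $\sum_{j\neq i}\omega_g(ij)$ (the weighted vertex degrees) and off-diagonal entries $-\omega_g(ij)$, which is by definition the matrix of $\Delta_{(G_\Gamma,\omega_g)}$.

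Taking $\det^\ast$ and dividing by $\prod_i A(M_i)$ yields the identity in the canonical basis; passing to an arbitrary basis $\Psi'_i=\sum_j T_{ij}\Psi_j$ is a linear-algebraic check, using that the kernel of $\Psi\mapsto\delta\Psi$ is the intrinsic one-dimensional space of global constants on $M$ (one-dimensional because $G_\Gamma$ is connected, which in turn follows from the connectedness of $M$), so both $\det$ and $\det^\ast$ transform compatibly when interpreted modulo this kernel. The main obstacle is essentially only conceptual: recognising that the canonical basis simultaneously diagonalises the area pairing and turns the boundary-jump pairing into the weighted graph Laplacian. Once this identification is made, the remainder is bookkeeping with the double cover $\partial M_\Gamma\to\Gamma$.
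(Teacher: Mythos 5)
Your proof is essentially the paper's: both pick the canonical basis $\Psi_i=\mathbf{1}_{M_i}$, observe that the area pairing is diagonal with determinant $\prod_i A(M_i)$, and identify the boundary-jump Gram matrix $(\delta\Psi_i,\delta\Psi_j)_\Gamma$ entry-by-entry with the weighted graph Laplacian $\Delta_{(G_\Gamma,\omega_g)}$, noting that components of $\Gamma$ bounding $M_i$ on both sides contribute zero. Your extra remark about sign-independence of the sheet labelling is a correct and useful clarification.

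One caveat, shared by the paper: the closing claim that passing to an arbitrary basis is a routine linear-algebra check is actually delicate. Under $\Psi'=T\Psi$, both Gram matrices transform by congruence $A\mapsto TAT^\top$, and while $\det$ picks up $(\det T)^2$, the quantity $\det^\ast$ (product of nonzero eigenvalues of a singular symmetric matrix) does \emph{not} transform by $(\det T)^2$ in general, so the ratio on the left-hand side is not manifestly basis-independent. For example with $p=2$, $G=I$, $L=\bigl(\begin{smallmatrix}1&-1\\-1&1\end{smallmatrix}\bigr)$ and $T=\mathrm{diag}(1,2)$ one gets $\det^\ast(TLT^\top)/\det(TGT^\top)=5/4\neq 2$. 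The paper asserts basis-independence without proof as well, and only ever uses the canonical basis, so this is a shared imprecision in the statement rather than a defect peculiar to your argument; still, your phrase ``transform compatibly when interpreted modulo this kernel'' papers over a real subtlety.
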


\begin{proof}
The statement is independent of the choice of basis, so choose
$
\Psi_i
$
to be the characteristic function on $M_i$. 
Clearly,
$
    \det(\Psi_i, \Psi_j)_{M_\Gamma}=\prod_{i=1}^p A(M_i)
$.
We also have
$$
(\delta\Psi_i,\delta\Psi_j)_\Gamma
=\begin{cases} \ell(\partial M_i\cap \Gamma)^\ast & i=j\ , \\
-\ell(\partial M_i\cap \partial M_j) & i\neq j \ .
\end{cases}
$$
The $\ast$ means we omit components of $\Gamma$ that bound only $M_i$.
Now the result follows 
    by the definition of $(G_\Gamma, \omega_g)$ and  the graph Laplacian
    from the Introduction (see Section \ref{sec:graphs} for more
    details).
\end{proof}

\begin{proof}[Proof of Theorem \ref{thm:main}]
Use a small modification of \cite[Theorem B*]{BFK:92} to incorporate the boundary $\partial M$, and write
$$
\left[\Det\Delta_D\right]_{M}=\prod_{i=0}^p
\left[\Det\Delta_D\right]_{M_i} \Det \Ncal(M,\Gamma) \ .
$$
On the other hand, by a similar modification of \cite[Theorem
    3.3]{Wentworth:12} we have
$$
    \frac{
\left[\Det\Delta_D\Det^\ast\Delta_N\right]_{M}
    }{2A(M)}
=\prod_{i=0}^p\left[\Det\Delta_D\Det^\ast\Delta_N\right]_{M_i}
\frac{\det^\ast(\delta\Psi_i,\delta\Psi_j)_\Gamma}{\det(\Psi_i,
\Psi_j)_{M_\Gamma}}
\Det^\ast_Q \Ncal_A(M,\Gamma) \ .
$$
    The result now follows from Proposition \ref{prop:I-ratio} and Lemma \ref{lem:psi}. 
    Notice that all the factors involving geodesic curvature along $\Gamma$ cancel pairwise.
\end{proof}

\section{Examples} \label{sec:examples}

\subsection{Disks and annuli}
By \cite{EdwardWu:91}  and \cite{GuillarmouGuillope:07}, or alternatively
\cite{Weisberger:87}
and eq.\ \eqref{eqn:weisberger}, or indeed by a direct calculation,  it follows that $\I(M)=1$
for the disk, and $\I(M)=2\pi/\log\rho$ for the annulus with modulus
$(\log\rho)/2\pi$,  $1<\rho<\infty$. 
If $M$ is a euclidean
disk of radius $R$ and $\Gamma$ the circle centered at the origin of radius $r$, $0<r<R$, then Theorem \ref{thm:main} states in this case that
\begin{equation} \label{eqn:disk}
1=\frac{4\pi}{\log\rho}(2\pi r)\frac{\Det_Q^\ast \Ncal_A}{(\Det \Ncal)^2}
\ ,
\end{equation}
where $\rho=R/r$ and $\Ncal$, $\Ncal_A$ 
denote the operators for the pair $(M,\Gamma)$, and we have used that 
$\det^\ast\Delta_{(G_\Gamma,\omega_g)}=2\pi r$ in this case. Let us 
verify \eqref{eqn:disk} directly.  
The operators $\Ncal$, $\Ncal_A$ have eigenvalues $1/r\log\rho$, and 
$1/2r\log\rho$, corresponding to the constant functions on $\Gamma$. 
It therefore suffices to show that
\begin{equation} \label{eqn:temp}
(\Det' \Ncal)^2=(2\pi r)^2\Det_Q' \Ncal_A
\ ,
\end{equation}
where the prime indicates the determinant of the operator restricted to the space $L^2_0(\Gamma)\subset L^2(\Gamma)$ orthogonal to the constants.  For the operator $Q$ we may choose
\begin{equation} \label{eqn:Q}
Q\bigl( \sum_{n\in \ZBbb} f_ne^{in\theta}\bigr)=f_0+ \sum_{n\neq 0} |n|f_ne^{in\theta}
\ ,
\end{equation}
so that $\zeta_Q(0)=0$ as in the Introduction.
Since $\zeta_{\Ncal}(0)=-1$, we must  prove
\begin{equation} \label{eqn:temp-bis}
    \log\Det' \widehat\Ncal=\frac{1}{2}\log \Det_Q' \Ncal_A \ ,
\end{equation}
where $\widehat\Ncal=(2\pi r)\Ncal$.
Set $\tau=\rho^{-1}$.

Now by a direct calculation one finds (recall \eqref{eqn:dn}):
\begin{align*}
\Ncal\bigl(\sum_{n\neq 0} f_ne^{in\theta}\bigr)&=\sum_{n\neq
    0}\frac{|n|a_n}{r} f_ne^{in\theta}\ , \\
\Ncal_A\biggl(\sum_{n\neq 0} {f_n\choose g_n}e^{in\theta}\biggr)&=\sum_{n\neq 0} a_n
\left(\begin{matrix} 0& -i\sigma(n)\\ i\sigma(n) & -\frac{2r}{|n|}\end{matrix}\right)
{f_n\choose g_n}e^{in\theta} \ ,
\end{align*}
where $ a_n=\frac{2}{1-\tau^{2|n|}}$ and $\sigma(n)$ is the sign of $n$.  
For fixed $R$, regard $\widehat\Ncal$ and $\Ncal_A$ as operators depending upon
$\tau$. 
Then as $\tau\to 0$, 
$$
\zeta_{\widehat\Ncal(0)}(s)=2(4\pi)^{-s}\zeta(s)\ ,\ (\Ncal_A(0))^2=4\cdot\text{id}
\ ,
$$
where $\zeta(s)$ is the Riemann zeta function.  Below we use that 
$\zeta(0)=-1/2$ and $\zeta'(0)=-\frac{1}{2}\log(2\pi)$.
Hence, on the one hand, 
\begin{align*}
    \log\Det_Q'\Ncal_A(0)&=\frac{1}{2}\log\Det_Q'(\Ncal_A(0))^2
    =\frac{1}{2}\, f.p.\tr\left(Q^{-s}\log(4)\cdot\text{id}\right)\bigr|_{s=0}
    \\
    &=4\log(2)\, f.p.(\zeta(s))\bigr|_{s=0}
    =-2\log(2)
    \ .
\end{align*}
On the other hand, 
$$-\zeta_{\widehat\Ncal(0)}'(0)=2(\log(4\pi)\zeta(0)-2\zeta'(0))=-\log(2)\
.$$
Thus, \eqref{eqn:temp-bis} is satisfied in this case. 

Next, by direct calculation,
$$
\frac{d}{d\tau}\log\Det_Q'\Ncal_A=
f.p.\tr\left(Q^{-s}\widehat\Ncal^{-1}\frac{d}{d\tau}\widehat\Ncal\right)\biggr|_{s=0}
\ .
$$
But since the derivative of $a_n$ with respect to $\tau$ vanishes rapidly
with $n$, the operator $\widehat\Ncal^{-1}(d\widehat\Ncal/d\tau)$ is trace-class, and
so 
$$
    \frac{d}{d\tau}\log\Det'\widehat\Ncal=
\tr\left(\widehat\Ncal^{-1}\frac{d}{d\tau}\widehat\Ncal\right)
    =f.p.\tr\left(Q^{-s}\widehat\Ncal^{-1}\frac{d}{d\tau}\widehat\Ncal\right)\biggr|_{s=0}
    =
\frac{d}{d\tau}\log\Det_Q'\Ncal_A
\ .
$$
Thus, \eqref{eqn:temp-bis} is proven.

\subsection{Multiply connected planar domains}
\label{sec:planar}
Next, consider a planar domain 
$M(\boldsymbol{\varepsilon})$, $\boldsymbol{\varepsilon}=(\varepsilon_1, \ldots, \varepsilon_n)$,
obtained by removing disks of radius $\varepsilon_i>0$ (with fixed centers at
$a_1,\ldots, a_n$) from a  disk of fixed radius, which  without loss of 
generality we may choose to be the unit disk $D$.
The following generalizes the case of the annulus in the previous section.


\begin{theorem} \label{thm:planar}
$\displaystyle
    \lim_{\boldsymbol{\varepsilon}\to 0} 
    \I(M(\boldsymbol{\varepsilon}))\prod_{i=1}^n\log(1/\varepsilon_i)
    =(2\pi)^n$.
\end{theorem}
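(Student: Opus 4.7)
The plan is to apply Theorem \ref{thm:main} along circles surrounding each small hole and to extract the asymptotic by comparing with the gluing formula applied to the undeformed unit disk.

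Fix small radii $r_1,\ldots,r_n>0$ (independent of $\boldsymbol{\varepsilon}$) so that the closed disks $\overline{D}(a_i,r_i)$ are pairwise disjoint and contained in the interior of $D$, and set $\Gamma_i=\{|z-a_i|=r_i\}$ and $\Gamma=\bigsqcup_i \Gamma_i$. Then $M(\boldsymbol{\varepsilon})_\Gamma = M_0 \sqcup A_1(\varepsilon_1)\sqcup\cdots\sqcup A_n(\varepsilon_n)$, where $M_0 := D\setminus\bigcup_i \overline{D}(a_i,r_i)$ is \emph{independent} of $\boldsymbol{\varepsilon}$ and each $A_i(\varepsilon_i)=\{\varepsilon_i<|z-a_i|<r_i\}$ is a round annulus of modulus $\log(r_i/\varepsilon_i)/2\pi$. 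The graph $G_\Gamma$ is the star with center $M_0$ and $n$ leaves $A_i$, with edge weights $\omega_g(M_0,A_i)=2\pi r_i$. A direct eigenvalue calculation (or Kirchhoff's matrix tree theorem) yields ${\det}^\ast\Delta_{(G_\Gamma,\omega_g)}=(n+1)\prod_i 2\pi r_i$, independent of $\boldsymbol{\varepsilon}$.

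Apply Theorem \ref{thm:main} both to $M(\boldsymbol{\varepsilon})$ and to the unit disk $D$ cut along the same $\Gamma$, using $\I(D)=\I(D(a_i,r_i))=1$ and $\I(A_i(\varepsilon_i))=2\pi/\log(r_i/\varepsilon_i)$ from Section \ref{sec:examples}. This gives
\begin{align*}
\I(M(\boldsymbol{\varepsilon})) &= \I(M_0)\cdot\prod_{i=1}^n \frac{2\pi}{\log(r_i/\varepsilon_i)}\cdot (n+1)\prod_i 2\pi r_i\cdot\frac{2\,\Det^\ast_Q\Ncal_A(M(\boldsymbol{\varepsilon}),\Gamma)}{(\Det\Ncal(M(\boldsymbol{\varepsilon}),\Gamma))^2},\\
1 &= \I(M_0)\cdot (n+1)\prod_i 2\pi r_i\cdot \frac{2\,\Det^\ast_Q\Ncal_A(D,\Gamma)}{(\Det\Ncal(D,\Gamma))^2}.
\end{align*}
The $\boldsymbol{\varepsilon}$-independent factors $\I(M_0)$ and $(n+1)\prod_i 2\pi r_i$ cancel upon taking the ratio, leaving
$$
\I(M(\boldsymbol{\varepsilon}))=\prod_{i=1}^n \frac{2\pi}{\log(r_i/\varepsilon_i)}\cdot R(\boldsymbol{\varepsilon}),\qquad R(\boldsymbol{\varepsilon}):=\frac{\Det^\ast_Q\Ncal_A(M(\boldsymbol{\varepsilon}),\Gamma)\,(\Det\Ncal(D,\Gamma))^2}{\Det^\ast_Q\Ncal_A(D,\Gamma)\,(\Det\Ncal(M(\boldsymbol{\varepsilon}),\Gamma))^2}.
$$
Since $\log(1/\varepsilon_i)/\log(r_i/\varepsilon_i)\to 1$, multiplying through by $\prod_i\log(1/\varepsilon_i)$ reduces the theorem to the claim $R(\boldsymbol{\varepsilon})\to 1$.

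The main obstacle is establishing this convergence of regularized determinants. The key point is that both $\Ncal(M(\boldsymbol{\varepsilon}),\Gamma)$ and $\Ncal(D,\Gamma)$ equal the same $\boldsymbol{\varepsilon}$-independent operator $\DN_{M_0}|_\Gamma$ (coming from $M_0$ with Dirichlet conditions on $\partial D$) plus a block-diagonal piece on $L^2(\Gamma)=\bigoplus_i L^2(\Gamma_i)$: respectively $\bigoplus_i \DN_{A_i(\varepsilon_i),\mathrm{out}}$ and $\bigoplus_i \DN_{D(a_i,r_i)}$. A direct Fourier-mode computation on each $\Gamma_i$, analogous to the one carried out in Section \ref{sec:examples}, shows that the difference has $m$-th eigenvalue $(|m|/r_i)\cdot 2(\varepsilon_i/r_i)^{2|m|}/\bigl(1-(\varepsilon_i/r_i)^{2|m|}\bigr)$ for $m\neq 0$ and $1/\bigl(r_i\log(r_i/\varepsilon_i)\bigr)$ for $m=0$, so $P(\boldsymbol{\varepsilon}):=\Ncal(M(\boldsymbol{\varepsilon}),\Gamma)-\Ncal(D,\Gamma)$ is smoothing with trace norm tending to $0$ as $\boldsymbol{\varepsilon}\to 0$. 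Applying the variational formula \eqref{eqn:trace-class} along the linear path $t\mapsto \Ncal(D,\Gamma)+tP(\boldsymbol{\varepsilon})$ gives
$$
\log\Det\Ncal(M(\boldsymbol{\varepsilon}),\Gamma)-\log\Det\Ncal(D,\Gamma)=\int_0^1 \tr\bigl((\Ncal(D,\Gamma)+tP(\boldsymbol{\varepsilon}))^{-1}P(\boldsymbol{\varepsilon})\bigr)\,dt\ \longrightarrow\ 0.
$$
Since \eqref{eqn:trace-class} continues to hold for the $Q$-regularized determinant of an order-zero operator, an identical argument applied to $\Ncal_A(M(\boldsymbol{\varepsilon}),\Gamma)-\Ncal_A(D,\Gamma)$ — which is likewise smoothing with vanishing trace norm — yields $\Det^\ast_Q\Ncal_A(M(\boldsymbol{\varepsilon}),\Gamma)\to\Det^\ast_Q\Ncal_A(D,\Gamma)$. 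Hence $R(\boldsymbol{\varepsilon})\to 1$, and the theorem follows.
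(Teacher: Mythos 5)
Your proof is correct and takes essentially the same approach as the paper: apply the gluing formula of Theorem \ref{thm:main} to both $M(\boldsymbol{\varepsilon})$ and the unit disk cut along the fixed circles $\Gamma$, then use the fact that the Neumann jump operators differ by a trace-class perturbation (explicit in Fourier modes on each annulus) that vanishes as $\boldsymbol{\varepsilon}\to 0$ to conclude the ratio of determinants tends to $1$. The paper asserts this convergence more tersely and then ``applies the gluing formula once again to the limit,'' whereas you have made the variational step \eqref{eqn:trace-class} explicit and written the two gluing formulas as a ratio, but the ingredients and structure are the same.
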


\begin{proof}
Fix an $\varepsilon_0>0$, $\varepsilon_0<\frac{1}{2}\min\{ |a_i-a_j|
\mid i\neq j\}$, and   
such that $D_{\varepsilon_0}(a_i)\Subset D$.
Then set
$\boldsymbol{\varepsilon}_0=(\varepsilon_0,\ldots,\varepsilon_0)$. 
    We suppose $\boldsymbol{\varepsilon}< \boldsymbol{\varepsilon}_0$, and
    let $A_{\varepsilon_i}$ denote the annulus $\varepsilon_i\leq
    |z-a_i|\leq \varepsilon_0$.
    Recall that
    $\I(A_{\varepsilon_i})=2\pi/\log(\varepsilon_0/\varepsilon_i)$. 
    Then taking $\Gamma$ in Theorem
    \ref{thm:main} to be the collection of curves $\partial
    D_{\varepsilon_0}(a_i)$, we have:
    \begin{equation} \label{eqn:disk-factorize}
\I(M(\boldsymbol{\varepsilon}))
=
\I(M(\boldsymbol{\varepsilon}_0))
        ({\det}^\ast\Delta_{(G_\Gamma,\omega_g)})\frac{(2\cdot\Det_Q^\ast
        \Ncal_A(M(\boldsymbol{\varepsilon}),\Gamma))}{(\Det\Ncal(M(\boldsymbol{\varepsilon}),\Gamma))^2}
        \times \prod_{i=1}^n \frac{2\pi}{\log(\varepsilon_0/\varepsilon_i)}
        \ .
    \end{equation}


    Since the first two  factors  on the right hand side of
    \eqref{eqn:disk-factorize} are fixed independent of the
    $\varepsilon_i$, 
    it suffices to analyze the determinants of the Neumann jump
    operators. 
    But by direct calculation, the Dirichlet-to-Neumann operator on an
    annulus of  modulus $\rho$ (with Dirichlet conditions on the inner
    boundary) is 
    equal to the Dirichlet-to-Neumann operator on the disk of radius
    $\varepsilon_0$ up to a trace-class operator whose norm $\to 0$ as
    $\varepsilon_i\to 0$. To be precise, on the $n$-th Fourier mode,
    $\DN(A_{\varepsilon})$ acts as:
    $$
    \DN(A_{\varepsilon})(g_ne^{in\theta})=\frac{|n|}{\varepsilon_0}
    \left(1+\frac{2\rho^{-2|n|}}{1-\rho^{-2|n|}}\right)
    g_ne^{in\theta}\ .
    $$
    It follows that as $\varepsilon_i\to 0$, 
    $
\Det\Ncal(M(\boldsymbol{\varepsilon}),\Gamma)
    $ tends to the determinant of the corresponding operator on the domain with
    $A_{\varepsilon_i}$ replaced by the disk  $D_{\varepsilon_0}(a_i)$. 
The same holds for $\Ncal_A(M(\boldsymbol{\varepsilon}),\Gamma)$. 
Applying the gluing formula once again to the limit, we obtain the result.
\end{proof}

\section{Asymptotics of $\I(M)$} \label{sec:asymptotics}

\subsection{Genus zero}
As we have seen in Section \ref{sec:planar}, general bounds on the invariant $\I(M)$
are obtained from Theorem \ref{thm:main} by judicious choices of $\Gamma$. 
This can be done on a case-by-case basis; to get overall uniform estimates is
combinatorially complicated. For simplicity, here we treat only the genus zero
case in complete generality. In higher genus, we find rough estimates
that suffice for the application to the Laplacian with Dirichlet
conditions. 
Recall that 
by a ``short geodesic'' on a hyperbolic surface, 
we mean a simple closed geodesic of length less than the absolute constant
$c_0$ appearing in the collar lemma (cf.\ \cite{Keen:74}).

\begin{theorem} \label{thm:asymptotics}
Fix  positive numbers
$b_1,\cdots, b_n$,  $n\geq 3$, and $\delta>0$.
    Then there is a constant $C\geq 1$ 
     depending only on $c_0$, $(b_1,\cdots, b_n)$, and $\delta$, 
such that the following holds.
For any hyperbolic surface $M$ of genus zero with
geodesic boundary components of lengths
$b_1,\cdots, b_n$, let $\Gamma$ be the
    collection of short geodesics on $M$ and
    $\Delta_{(G_\Gamma,\omega_{g})}$
    the graph Laplacian from the Introduction.
    Then
    \begin{equation} \label{eqn:asymptotics}
    C^{-1}
        \frac{\prod_{\gamma\in
        \Gamma}\ell(\gamma)}{\det(\Delta_{(G_\Gamma,\omega_{g})}+D)}
    \leq
    \I(M)\leq C
        \frac{\prod_{\gamma\in \Gamma}\ell(\gamma)
        }{\det(\Delta_{(G_\Gamma,\omega_{g})}+D)}
        \ ,
    \end{equation} 
    where 
         $D$ is the diagonal matrix with entries $\delta$
    for the vertices
    corresponding to the  components of $M_\Gamma$ that intersect $\partial M$, and zeros
    elsewhere. 
\end{theorem}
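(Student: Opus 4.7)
The plan is to apply Theorem~\ref{thm:main} with $\Gamma$ equal to the collection of short geodesics on $M$, yielding
$$\I(M) = \I(M_\Gamma)\cdot{\det}^\ast\Delta_{(G_\Gamma,\omega_g)}\cdot\frac{2\,\Det_Q^\ast\Ncal_A(M,\Gamma)}{(\Det\Ncal(M,\Gamma))^2},$$
and then to exhibit matching upper and lower bounds for each factor up to a constant depending only on $b_1,\ldots,b_n$, $c_0$, and $\delta$. In genus zero the graph $G_\Gamma$ is a tree, so by the classical matrix-tree theorem ${\det}^\ast\Delta_{(G_\Gamma,\omega_g)} = |V(G_\Gamma)|\cdot\prod_{\gamma\in\Gamma}\ell(\gamma)$, which already accounts for the numerator in \eqref{eqn:asymptotics} up to a bounded factor. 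The remaining task is to show that the product $\I(M_\Gamma)\cdot(2\,\Det_Q^\ast\Ncal_A)/(\Det\Ncal)^2$ is bounded above and below by constant multiples of $\det(\Delta_{(G_\Gamma,\omega_g)}+D)^{-1}$.

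To control $\I(M_\Gamma)$, I would refine $\Gamma$ to $\Gamma'$ by adjoining, for each $\gamma\in\Gamma$, a pair of auxiliary curves at fixed hyperbolic distance $d_0$ inside the collar of $\gamma$, and apply the gluing formula a second time to $(M,\Gamma')$. This replaces $\I(M_\Gamma)$ by a product of (a) explicit contributions from the half-collar annuli between each $\gamma$ and the auxiliary curves, each of the form $\I\sim\ell(\gamma)/\pi$ by the annulus computation in Section~\ref{sec:examples}, and (b) invariants of the resulting \emph{thick} pieces, which have injectivity radius bounded below by $d_0$ and hence, by Mumford compactness for hyperbolic surfaces with boundary of bounded geometry, satisfy $\I\in[C_1^{-1},C_1]$. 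All the auxiliary operators introduced by this second application (Neumann jump determinants across the auxiliary curves, graph Laplacians of the enlarged graph, etc.)\ involve only bounded-geometry regions and hence contribute uniformly bounded factors.

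Next I would analyze the Neumann jump operators $\Ncal(M,\Gamma)$ and $\Ncal_A(M,\Gamma)$ by splitting $L^2(\Gamma)=V_0\oplus V_1$, where $V_0$ is the span of constants on each component of $\Gamma$ and $V_1$ its orthogonal complement. By Lemma~\ref{lem:subspace-det}, each determinant factors through this splitting. On $V_1$, the DN operators differ from those on standard annuli by terms exponentially small in $1/\ell(\gamma)$ (as in the annulus calculation), so the $V_1$-blocks and the Schur-complement corrections contribute uniformly bounded factors. On $V_0$, $\Ncal(M,\Gamma)$ becomes a finite-dimensional matrix whose diagonal entries, coming from extending constants across the two half-collars of each $\gamma$, scale like $\ell(\gamma)/\log\rho_\gamma$, while the off-diagonal structure reflects the weighted graph Laplacian $\Delta_{(G_\Gamma,\omega_g)}$ induced by the thick pieces. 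The Dirichlet condition on $\partial M$ forces a uniformly positive perturbation of the diagonal at vertices whose pieces meet $\partial M$; after the natural rescaling by the $\ell(\gamma)$ this is exactly the $D$-matrix in the statement.

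The main obstacle is the final assembly: after absorbing all the collar and thick-piece factors, the finite-dimensional $V_0$-determinant, normalized by $\prod_\gamma\ell(\gamma)$, must be identified up to a bounded multiplicative error with $\det(\Delta_{(G_\Gamma,\omega_g)}+D)$. This is where Theorem~\ref{thm:kirchhoff} is indispensable: the extended matrix-tree formula expresses $\det(\Delta+D)$ as a positive sum over rooted spanning forests of $G_\Gamma$ in which each tree contains a vertex with $D_v>0$, which precisely matches the way Dirichlet conditions on $\partial M$ propagate along thick pieces to provide positivity even when some components of $M_\Gamma$ (e.g.\ interior pants) do not meet $\partial M$. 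Carrying out this identification uniformly in $\ell(\gamma)\in(0,c_0]$ and collecting the many bounded error terms into a single constant of the claimed form is the technical heart of the argument.
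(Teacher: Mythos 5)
Your high-level strategy is in the right family --- apply Theorem~\ref{thm:main} to a curve system adapted to the short geodesics, use bounded geometry on the thick parts, split the jump operators via Lemma~\ref{lem:subspace-det}, and invoke Theorem~\ref{thm:kirchhoff} to recognize the graph Laplacian with potential --- but the specific decomposition you choose differs from the paper's in a way that introduces real problems, and the key identification is left unproved.

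The paper does \emph{not} apply the gluing formula to $\Gamma$ itself. Instead, in Step~1 it surrounds each $\gamma\in\Gamma$ with a conformal annulus $A_\gamma$, makes the metric euclidean near $\partial A_\gamma$, and applies Theorem~\ref{thm:main} to $\widetilde\Gamma := \bigcup_{\gamma}\partial A_\gamma$, a curve system of \emph{bounded} length. The payoff of this choice is substantial: $\det^\ast\Delta_{(G_{\widetilde\Gamma},\omega_g)}$ is then bounded above and below, the operators $\Ncal(M,\widetilde\Gamma)$ and $\Ncal_A(M,\widetilde\Gamma)$ act on $L^2$ of bounded-length circles so the trace-class convergence to the capped-off operators on $V_1$ is routine, and the $\prod_\gamma\ell(\gamma)$ factor in the numerator comes cleanly from $\I(M_{\widetilde\Gamma})\asymp\prod_\gamma\I(A_\gamma)\asymp\prod_\gamma\ell(\gamma)$ (one annulus per $\gamma$). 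The denominator $\det(\Delta_{(G_\Gamma,\omega_g)}+D)$ comes entirely from the small-eigenvalue block of $\Ncal(M,\widetilde\Gamma)$ (Proposition~\ref{prop:A}). Your choice of cutting at $\Gamma$ directly pushes all this into less controlled territory: $\I(M_\Gamma)$ involves components with arbitrarily short boundary geodesics, and the jump operators live on curves of length $\to 0$, where the heat-trace/$\zeta$-regularization behavior has its own $\ell$-dependence that your proposal does not track.

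There is also a concrete scaling error. You assert that the $V_0$ diagonal entries of $\Ncal(M,\Gamma)$ ``scale like $\ell(\gamma)/\log\rho_\gamma$'' (i.e., $\sim\ell(\gamma)^2$), and ``after the natural rescaling by the $\ell(\gamma)$ this is exactly the $D$-matrix.'' This is not right. With the hyperbolic metric, the Dirichlet-to-Neumann operator of a component on the constant function at a short boundary $\gamma$ is $O(1)$, not $O(\ell^2)$: in collar coordinates the flux $\ell\cosh\rho\,\partial_\rho\varphi$ is constant along the collar, and matching against the bounded-geometry thick part forces the normal derivative at $\rho=0$ to be of order one. The $O(\lambda(\gamma))\asymp O(\ell(\gamma))$ scaling that the paper exploits appears because the paper's $\Ncal(M,\widetilde\Gamma)$ uses the \emph{flat} metric on $\partial A_\gamma$ (length $2\pi$), for which the annular DN value on constants is $\lambda(\gamma)=1/\log\rho(\gamma)$; it is the characteristic functions $\chi_i$ of the \emph{isolated} thick pieces, not constants on the short geodesics, that produce the small eigenvalues. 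Your auxiliary-curve refinement (two half-collars per $\gamma$, each with $\I\sim\ell(\gamma)$, on top of $\det^\ast\Delta_{(G_\Gamma,\omega_g)}\asymp\prod\ell(\gamma)$) also over-counts powers of $\ell(\gamma)$ and would require a compensating $\prod\ell(\gamma)^{-2}$ from the jump determinants that is nowhere established. Finally, you explicitly defer ``the technical heart of the argument'' --- the uniform two-sided comparison of the finite-dimensional $V_0$-determinant with $\det(\Delta_{(G_\Gamma,\omega_g)}+D)$. In the paper this is Proposition~\ref{prop:A}, proved via Claims~\ref{claim:A} and~\ref{claim:B}, Lemma~\ref{lem:key} (exponential decay of the off-diagonal perturbation across the tree), and Corollaries~\ref{cor:increasing} and~\ref{cor:comparison}; it is not a detail to be waved at, but the core of the theorem.
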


\begin{proof}
The proof proceeds in several steps. 
Recall that we denote the connected components of $M\setminus \Gamma$ by
$M_i$. 

\medskip
\noindent {\bf  Step 1}.
First,
about each $\gamma\in \Gamma$  we choose a  conformal
    annulus $A_\gamma$ of modulus $(\log \rho(\gamma))/2\pi$,
    $1<\rho(\gamma)<+\infty$.
    We choose a  local coordinate 
$1/\rho(\gamma)\leq |z|\leq 1$ for $A_\gamma$. By a conformal change 
we adjust  the hyperbolic  metric   to be
euclidean in a neighborhood of the boundary $\partial A_\gamma$.
    Introduce  the notation
    \begin{equation} \label{eqn:lambda}
    \lambda(\gamma):=1/\log\rho(\gamma)\ .
    \end{equation}
    The significance of
    $\lambda(\gamma)$ is that it is the value of the $\DN$-operator for
    $A_\gamma$  at the
    boundary $|z|=1$ for the characteristic function of this boundary.
    The value the $\DN$-operator at the other boundary is
    $-\lambda(\gamma)$. 
    By the collar lemma and  Wolpert's estimate (cf.\ \cite{Wolpert:90}),
    we may choose $A_\gamma$ such that  the length $\ell(\gamma)$ (in the hyperbolic
    metric) of the geodesic in $(M,g)$
    homotopic to $\gamma$ satisfies $\ell(\gamma)\sim 2\pi^2/\log\rho(\gamma)$. 
    Thus, we have a comparison between $\lambda(\gamma)$ and
    $\ell(\gamma)$ that is uniform as $\ell(\gamma)\to 0$.

Let 
$$\widetilde\Gamma := \bigcup_{\gamma\in \Gamma}\partial A_\gamma\ ,$$
and set $\widetilde M_i$ to be the connected component of 
    $$\mathring{M}_{\widetilde\Gamma}:=M\setminus
    \bigcup_{\gamma\in \Gamma }A_\gamma$$ that intersects $M_i$. 
    We let $\overline M_{\widetilde\Gamma}$ denote the (possibly
    disconnected) Riemann  surface obtained by ``capping off''
    $\mathring{M}_{\widetilde\Gamma}$, i.e.\ replacing each 
    annulus $A_\gamma$ by a pair of disks.  
An important point is the bounded geometry of $\widetilde M_i$ as $M$ varies in the
moduli space of hyperbolic metrics with fixed set $\Gamma$ of short geodesics. See
\cite{Wolpert:87,Wolpert:90}. 
    In the following, we shall call $\widetilde M_i$ an \emph{isolated
    component} if $\partial\widetilde M_i\cap\partial M=\emptyset$.

\medskip
\noindent {\bf  Step 2}.
Let $\Ncal=\Ncal(M,\widetilde\Gamma)$ and
$\Ncal_A=\Ncal_A(M,\widetilde\Gamma)$ be the Neumann jump operators 
acting on $L^2(\widetilde\Gamma)$. 
For each $i$, let $\chi_i\in L^2(\widetilde\Gamma)$
denote the function defined by boundary values of the characteristic function of the component
    $\widetilde M_i$, i.e.\ $\chi_i$ is locally constant  on $\widetilde
    \Gamma$,  and is equal to $1$
    on $\partial \widetilde M_i$ and $0$ otherwise. 
Let us write an orthogonal decomposition
$$
L^2(\widetilde\Gamma)=V_0\oplus V_1\ , 
$$
where $V_0$ is the span of 
all $\chi_i$ for isolated components $\widetilde M_i$. 
By a similar analysis to the one carried out in \cite{Wentworth:91b,
Wentworth:08, Wentworth:12}, for example, on the orthogonal complement
$V_1$,  as the
    $\ell(\gamma)\to 0$ 
the operators $\Ncal$ and $\Ncal_A$ converge in trace-class
to the corresponding operators $\overline \Ncal$ and $\overline\Ncal_A$ on the capped off components 
of
$\overline M_{\widetilde \Gamma}$ (see also the proof of Lemma
\ref{lem:key} below). 

\medskip
\noindent {\bf Step 3}.
We must analyze the small eigenvalues of $\Ncal$ and
$\Ncal_A$, which occur from the restriction to $V_0$. 
For $\Ncal_A$, this refers to the $\varphi''$ component. The analysis is
    therefore identical for both $\Ncal$ and $\Ncal_A$, and so henceforth
    we deal only with $\Ncal$.  
For the isolated components $\widetilde M_i$,
the harmonic function on $\widetilde M_i$ with the boundary values of $\chi_i$
is $\chi_i$ itself, and so the Dirichlet-to-Neumann operator for
$\widetilde M_i$  annihilates this. 
    On the other hand, if $\partial \widetilde M_i\cap \partial
    M\neq\emptyset$, then since Dirichlet conditions are imposed on
    $\partial M$, the $\DN$ operator is nonzero on the boundary
    values of $\chi_i$.  
    For each collar $A_\gamma$, $\DN$ is rotationally
    symmetric, and so preserves constants. 
In terms of the splitting $V_0\oplus V_1$, we may write:
\begin{equation} \label{eqn:matrix} 
\Ncal=
\left(
    \begin{matrix}
        A & B^\dagger\\ B&\Ncal_0
    \end{matrix}
\right)\ .
\end{equation}
Now $\Ncal_0$ is uniformly invertible as  the lengths $\ell(\gamma)\to 0$. 
By Lemma \ref{lem:subspace-det}, we have
\begin{equation} \label{eqn:modified-det}
\Det\Ncal=\det(A-B^\dagger\Ncal_0^{-1}B)\Det(\Ncal_0)\ . 
\end{equation}
It follows that
$\Det\Ncal$ is estimated by 
$\det(A-B^\dagger \Ncal_0^{-1}B)$. 
Recall the weighted graph $(G_\Gamma, \omega_{g})$ from the Introduction. 
The key result is the following

\begin{proposition} \label{prop:A}
    Fix $\delta>0$.
    There is a constant $C\geq 1$  depending only on $c_0$, $(b_1,\ldots,
    b_n)$, and $\delta$,  such that
    $$
    C^{-1}\det(\Delta_{(G_\Gamma,\omega_{g})}+D)
    \leq \det(A-B^\dagger\Ncal_0^{-1}B)\leq 
    C\det(\Delta_{(G_\Gamma,\omega_{g})}+D)
    \ .
    $$
\end{proposition}
\noindent
We postpone the proof of this proposition to the next section.

\medskip
\noindent {\bf Step 5}.
Assuming Proposition \ref{prop:A}, we complete the proof of Theorem
\ref{thm:asymptotics}.
By definition:
$$
\I(M_{\widetilde\Gamma})=
\I(\mathring{M}_{\widetilde\Gamma})
\prod_{\gamma\in \Gamma} \I(A_\gamma)
\ .
$$
Now $\I(A_\gamma)=2\pi/\log\rho(\gamma)\simeq \ell(\gamma)/\pi$, and
because $\mathring{M}_{\widetilde\Gamma}$ has bounded
geometry over the moduli space, 
$\I(\mathring{M}_{\widetilde\Gamma})$ is bounded from above and below away
from zero. Hence, there is an estimate (above and below):
$$
\I(M_{\widetilde\Gamma})
\simeq C\prod_{\gamma\in \Gamma}\I(A_\gamma)
\simeq C\prod_{\gamma\in \Gamma}\ell(\gamma)
\ .
$$
 Apply Theorem \ref{thm:main} to $\widetilde \Gamma$. 
The lengths of the elements of $\widetilde \Gamma$
are bounded away from
zero, so the factor ${\det}^\ast\Delta_{(G_{\widetilde\Gamma}, \omega_g)}$ in Theorem
\ref{thm:main} remains bounded
above and below away from zero. 
From the discussion in Step 2 above, eq.\ \eqref{eqn:modified-det},  and 
Proposition \ref{prop:A},
we have
$$
\frac{\Det_Q^\ast\Ncal_A(M,\widetilde\Gamma)}{(\Det\Ncal(M,\widetilde\Gamma))^2}\sim
\frac{1}{\Det\Ncal(M,\widetilde\Gamma)}
\ ,
$$
and
$$
C^{-1}\det(\Delta_{(G_\Gamma,\omega_M)}+D)
\leq \Det\Ncal(M,\widetilde\Gamma)
\leq C\det(\Delta_{(G_\Gamma,\omega_M)}+D)
\ ,
$$
for a constant $C$.
Putting this all together completes the proof.
\end{proof}

\subsection{Proof of Proposition \ref{prop:A}}
Here, we relate the action of $\Ncal$ on the locally  constant functions 
in $L^2(\widetilde \Gamma)$ associated to the isolated components of
$\mathring{M}_{\widetilde\Gamma}$ to the
graph Laplacian $\Delta_{(G_\Gamma,\omega_M)}$.
This relationship is contained in the matrix $A$ appearing in
\eqref{eqn:matrix}.  
Now the key point  is that in terms of the graph Laplacian, the  form of
the modification to the matrix $A$ in Proposition \ref{prop:A},
 $B^\dagger\Ncal_0^{-1}B$, corresponds  
 to adding edges 
with weights that are at least quadratic in the weights of the graph $G_\Gamma$. 
Using the results in Section \ref{sec:graphs}, we then  argue that for
small weights  such a modification gives only a small perturbation of determinants. 

To spell this out precisely, let us introduce some convenient notation. 
Enumerate the components $\widetilde M_i$ of
$\mathring{M}_{\widetilde\Gamma}$, 
and let $L_{ij}$ denote the adjacency matrix of $G_\Gamma$.
By the assumption of genus zero, if $L_{ij}\neq 0$, then there is a unique
element $\gamma_{ij}\in \Gamma$ bounding $M_i$ and $M_j$ (by definition:
$\gamma_{ij}=\gamma_{ji}$). 
Associated to $\gamma_{ij}$ are two elements of $\widetilde \Gamma$, one
bounding $\widetilde M_i$ and the other $\widetilde M_j$. 
Let $\chi_{i,\gamma_{ij}}$ and  $\chi_{j,\gamma_{ij}}$ denote the
characteristic functions of these two  components of $\widetilde \Gamma$.
\begin{figure}
    \hspace{-.5cm}
    \begin{tikzpicture}
        \draw (-3,1) to [out=-25, in=205] (3,1);
        \draw (-3,-1) to [out=25, in=155] (3,-1);
        \draw [thick] (0,.265) to [out=-75, in=75] (0,-.265);
        \draw (-2,.6) to [out=-75, in=75] (-2,-.6);
        \draw (2,.6) to [out=-75, in=75] (2,-.6);
        \draw [dotted] (-2,.6) to [out=-100, in=100] (-2,-.6);
        \draw [dotted] (2,.6) to [out=-100, in=100] (2,-.6);
        \node at (-3.5, 0) {$\widetilde M_i$};
        \node at (3.5, 0) {$\widetilde M_j$};
        \node at (0, .5) {$\gamma_{ij}$};
        \node at (-1.75, -.9) {$\chi_{i,\gamma_{ij}}$};
        \node at (2, -1) {$\chi_{j,\gamma_{ij}}$};
    \end{tikzpicture}
    \caption{}
\end{figure}
We define a weight function on $G_\Gamma$ by
\begin{equation}  \label{eqn:omega-lambda}
    \omega_M(ij) :=\begin{cases} \lambda(\gamma_{ij}) & L_{ij}\neq 0\\
    0 & L_{ij}=0\end{cases}\ ,
\end{equation} 
where $\lambda(\gamma)$ is defined in \eqref{eqn:lambda} above.
By the discussion in Step 1 of the previous section, there is a constant
$\kappa\geq 1$, uniform as the $\ell(\gamma)\to 0$, such that
for all $i,j$,
\begin{equation} \label{eqn:comp}
    \kappa^{-1} \omega_g(ij)\leq \omega_M(ij)\leq
    \kappa\omega_g(ij)\ .
\end{equation}
Let $G_0\subset G_\Gamma$ be the (possibly disconnected) subgraph obtained
by deleting nonisolated vertices and their edges. 
We identify the vertices of $G_0$ with the basis elements $\chi_i$ of
$V_0$. Then we  
\begin{claim} \label{claim:A}
    The matrix $A$ in \eqref{eqn:matrix} is the restriction to $G_0$ of the graph Laplacian 
    $\Delta_{(G_\Gamma,\omega_M)}$.
\end{claim}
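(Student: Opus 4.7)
The plan is to compute $\Ncal\chi_i$ explicitly for each isolated vertex $i$ and read off its projection onto $V_0$ in the basis $\{\chi_j\}$. The harmonic extensions that define $\Ncal$ are forced by the piecewise boundary data of $\chi_i$ on $\widetilde\Gamma$: on the isolated component $\widetilde M_i$, which does not meet $\partial M$ by assumption, the extension is the constant $1$; on every other component $\widetilde M_k$ the boundary data from $\widetilde\Gamma$ vanishes and the Dirichlet condition imposed on $\partial M$ (which enters in the definition of $\Ncal$) forces the extension to be $0$; and on each annulus $A_\gamma$ incident to $\widetilde M_i$, the extension is the rotationally symmetric logarithmic function taking the value $1$ on the $\widetilde M_i$ side and $0$ on the opposite side, while an $A_\gamma$ not incident to $\widetilde M_i$ contributes the zero extension.

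Applying the one-sided DN operators is then immediate. The components $\widetilde M_k$ contribute $0$ (normal derivative of a constant or of zero), and by the defining property of $\lambda(\gamma)$ recalled in Step~1 of the proof of Theorem~\ref{thm:asymptotics}, each incident annulus $A_\gamma$ contributes $+\lambda(\gamma)$ on the $\widetilde M_i$ side and $-\lambda(\gamma)$ on the opposite side. Summing across $\widetilde\Gamma$ yields
$$
\Ncal\chi_i \;=\; \sum_{\gamma\,\text{incident to}\,\widetilde M_i}\lambda(\gamma)\bigl(\chi_{i,\gamma}-\chi_{k(\gamma),\gamma}\bigr),
$$
where $k(\gamma)$ denotes the unique other endpoint of $\gamma$ in $G_\Gamma$.

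Projecting onto $V_0$ then produces the columns of $A$. The diagonal entry $A_{ii}$ collects $+\lambda(\gamma)$ over every edge incident to $i$ in $G_\Gamma$, whether the other endpoint is isolated or not, giving $A_{ii}=\sum_{j:\,L_{ij}\neq 0}\omega_M(ij)$. For an isolated $j\neq i$, the off-diagonal entry $A_{ji}$ equals $-\lambda(\gamma_{ij})=-\omega_M(ij)$ if $L_{ij}\neq 0$ and is zero otherwise, where the genus zero hypothesis is what guarantees the uniqueness of $\gamma_{ij}$ so that a single contribution appears in place of a sum. These are precisely the entries of the principal submatrix of $\Delta_{(G_\Gamma,\omega_M)}$ indexed by the vertex set of $G_0$.

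The crucial structural point — and where I expect the main care is needed — is the asymmetry between the two roles played by an edge incident to an isolated vertex $i$: an edge to a non-isolated neighbor $k$ still contributes to the diagonal of $A$ (because the annulus always produces $\lambda(\gamma)$ on the $\widetilde M_i$ side), yet it must not contribute an off-diagonal entry (because $\chi_{k(\gamma),\gamma}$ is not part of the basis of $V_0$). This asymmetry is exactly what restricting a graph Laplacian to a subset of vertices encodes — full diagonal retained, off-diagonals toward the complement erased — and matching it carefully against the contributions produced by $\Ncal$ is the main bookkeeping obstacle; the rest of the argument is an elementary calculation once the harmonic extensions above are identified.
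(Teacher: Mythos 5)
Your proposal is correct and follows exactly the paper's approach: the paper's one-line proof ("This follows by direct calculation of the harmonic extensions of the locally constant functions $\chi_i$") refers to the very computation you have carried out, identifying the piecewise-constant harmonic extension on each $\widetilde M_k$, the logarithmic extension on each annulus $A_\gamma$, and reading off $\langle\Ncal\chi_i,\chi_j\rangle$ from the $\pm\lambda(\gamma)$ contributions. Your structural remark about the diagonal versus off-diagonal asymmetry matches the split into ``$k$ isolated'' and ``$k$ not isolated'' in the paper's display \eqref{eqn:basic}.
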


\begin{proof}
    Let $\widetilde M_i$ and $\widetilde M_j$ be  isolated components. 
    The claim amounts to the statement that
    \begin{align}
        \begin{split}  \label{eqn:basic}
        \langle \Ncal(\chi_i), \chi_j\rangle &= -L_{ij}\lambda(\gamma_{ij})
    \ ,    \\
            \langle \Ncal(\chi_i), \chi_i\rangle &= \sum_{k\text{
                isolated}} L_{ik}\lambda(\gamma_{ik})
        + \sum_{k\text{
                not isolated}} L_{ik}\lambda(\gamma_{ik})
\ . 
        \end{split}
    \end{align}
This follows by direct calculation of the harmonic extensions of the
    locally constant functions $\chi_i$ (cf.\ Step 1 of the previous
    section). 
\end{proof}

We next consider the other entries of the decomposition \eqref{eqn:matrix}.
 As in \eqref{eqn:basic}, we have
\begin{equation} \label{eqn:B}
    B\chi_i = \sum_jL_{ij}\lambda(\gamma_{ij})\left\{
    (\chi_{i,\gamma_{ij}})^\perp-
    (\chi_{j,\gamma_{ij}})^\perp\right\} \ .
\end{equation}
Here, $\perp$ indicates the orthogonal projection to $V_1$ in the decomposition
$V_0\oplus V_1$. Note that the sum is over \emph{all} components
$\widetilde M_j$, not just isolated ones.  
Also, by definition $\Ncal_0\chi_i=0$. Let us define 
    \begin{equation} \label{eqn:P}
        P_{ij}:=
    \sum_{k,k'} L_{ik}L_{jk'}\lambda(\gamma_{ik})\lambda(\gamma_{jk'})
    \left\langle  \Ncal_0^{-1}\chi_{i,\gamma_{ik}}-\Ncal_0^{-1}\chi_{k,\gamma_{ik}},
    \chi_{j,\gamma_{jk'}} - \chi_{k',\gamma_{jk'}}\right\rangle
\end{equation}

Since  the summand in  \eqref{eqn:P} is skew-symmetric in $j$ and $k'$, we
have $\sum_j P_{ij}=0$, and therefore
\begin{equation} \label{eqn:miracle}
P_{ii}=-\sum_{j\neq i} P_{ij}\ .
\end{equation}
Let $\widehat G_\Gamma$ denote the complete graph on the vertices of
$G_\Gamma$; similarly $\widehat G_0\subset\widehat G_\Gamma$ denotes the complete graph on $G_0$.
Define weights for $\widehat G_\Gamma$ (possibly zero or nonpositive) by:
\begin{equation} \label{eqn:def-weights}
\widehat\omega_M(ij)=
\omega_M(ij) 
+ P_{ij}\ ,\ i\neq j\ .
\end{equation}
If we set (see \eqref{eqn:laplacian})
$$
\widehat\mu_M(i)=\mu_M(i)-P_{ii}\ ,
$$
then it follows from \eqref{eqn:miracle} that
$$\widehat\mu_M(i)
=\sum_{j\neq i} \widehat\omega_M(ij)
\ .
$$
Moreover, if $\widetilde M_i$ and $\widetilde M_j$ are both isolated, then
from \eqref{eqn:B} we have
$
P_{ij}=  \langle \Ncal_0^{-1}B\chi_i, B\chi_j\rangle 
$.
Combining this with Claim \ref{claim:A}, we therefore have the following:

\begin{claim} \label{claim:B}
    The matrix $A-B^\dagger\Ncal_0^{-1}B$  is the restriction to $\widehat G_0$ of the graph Laplacian 
    $\Delta_{(\widehat G_\Gamma, \widehat\omega_M)}$.
\end{claim}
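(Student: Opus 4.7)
The plan is to verify the identity entrywise by computing the matrix elements of $A - B^\dagger\Ncal_0^{-1}B$ in the basis $\{\chi_i\}$ of $V_0$ and matching them against the entries of $\Delta_{(\widehat G_\Gamma,\widehat\omega_M)}$ restricted to the isolated-vertex subgraph $\widehat G_0$. All the ingredients are already assembled: Claim \ref{claim:A} identifies the entries of $A$, formula \eqref{eqn:B} gives $B\chi_i$ explicitly, and \eqref{eqn:def-weights} together with the skew-symmetry identity \eqref{eqn:miracle} encodes the definition of $\widehat\omega_M$ and $\widehat\mu_M$.

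First I would treat the off-diagonal entries. For $i\neq j$ with both $\widetilde M_i$ and $\widetilde M_j$ isolated, Claim \ref{claim:A} gives $A_{ij}=-L_{ij}\lambda(\gamma_{ij})=-\omega_M(ij)$. Substituting \eqref{eqn:B} into the bilinear form and comparing with \eqref{eqn:P} yields $\langle\Ncal_0^{-1}B\chi_i,B\chi_j\rangle=P_{ij}$ — this identification is exactly the one flagged in the paragraph preceding the claim. Combining these,
\[
(A - B^\dagger\Ncal_0^{-1}B)_{ij}=-\omega_M(ij)-P_{ij}=-\widehat\omega_M(ij),
\]
which matches the off-diagonal entry of the graph Laplacian.

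Next I would handle the diagonal entries at an isolated vertex $i$. Claim \ref{claim:A} gives $A_{ii}=\mu_M(i)=\sum_k L_{ik}\lambda(\gamma_{ik})$, where the sum ranges over \emph{all} neighbors of $i$ in $G_\Gamma$ (isolated or not). The same inner-product calculation with $j=i$ yields $(B^\dagger\Ncal_0^{-1}B)_{ii}=P_{ii}$, so that
\[
(A - B^\dagger\Ncal_0^{-1}B)_{ii}=\mu_M(i)-P_{ii}=\widehat\mu_M(i).
\]
The identity \eqref{eqn:miracle} then gives $\widehat\mu_M(i)=\sum_{j\neq i}\widehat\omega_M(ij)$, which is precisely the diagonal entry of $\Delta_{(\widehat G_\Gamma,\widehat\omega_M)}$ at $i$.

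The only bookkeeping subtlety — and the one point where I expect a misstep is easiest to make — is keeping track of which index ranges run over "all vertices of $\widehat G_\Gamma$" versus "isolated vertices only". The restriction to $\widehat G_0$ on the left-hand side does not suppress the contribution of non-isolated neighbors from the diagonal, since those edges appear in $\mu_M(i)$ and the correction $P_{ii}$ exactly produces the needed $\widehat\mu_M(i)=\sum_{j\neq i}\widehat\omega_M(ij)$ via the skew-symmetry of $P_{ij}$ in its second pair of indices. Once this is correctly accounted for, the claim reduces to comparing definitions.
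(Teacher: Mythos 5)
Your proof is correct and matches the paper's argument essentially verbatim: both use Claim \ref{claim:A} for the $A$-entries, identify $(B^\dagger\Ncal_0^{-1}B)_{ij}$ with $P_{ij}$ for isolated $i,j$ via \eqref{eqn:B}, and then invoke the definitions \eqref{eqn:def-weights} and the identity \eqref{eqn:miracle} to match both off-diagonal and diagonal entries of the Laplacian. Your remark that the diagonal contributions from non-isolated neighbors (present in $\mu_M(i)$) are exactly compensated by $P_{ii}$ through the skew-symmetry of the summand in \eqref{eqn:P} is precisely the bookkeeping point the paper encodes in passing from $\mu_M(i)$ to $\widehat\mu_M(i)$.
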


By the discussion in Section \ref{sec:graphs} below (cf.\
\eqref{eqn:induction}), we conclude: 
\begin{lemma} \label{lem:reduction}
Fix $\delta>0$, and let $D$ denote the diagonal matrix with entry $\delta$
for all nonisolated components, and zeros elsewhere.
    Then
$    \det(A-B^\dagger\Ncal_0^{-1}B)$ is the coefficient of $\delta^k$ in
    $\det(\Delta_{(\widehat G_\Gamma, \widehat\omega_M)}+D)$,
    where $k$ is the number of nonisolated components. 
\end{lemma}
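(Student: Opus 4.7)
The plan is to combine Claim \ref{claim:B} with a standard multilinear expansion of the determinant of a matrix with a partial diagonal perturbation, which is presumably the content of the formula \eqref{eqn:induction} to be established in Section \ref{sec:graphs}. By Claim \ref{claim:B}, the matrix $A - B^\dagger \Ncal_0^{-1} B$ is precisely the principal submatrix of $\Delta_{(\widehat G_\Gamma, \widehat\omega_M)}$ obtained by restricting to the vertices of $\widehat G_0$, i.e., those associated to the isolated components of $\mathring M_{\widetilde\Gamma}$.

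Write the full Laplacian in block form according to the partition of vertices into $V_{\mathrm{iso}}$ (the isolated ones) and $V_{\mathrm{non}}$ (the nonisolated ones), with $|V_{\mathrm{non}}| = k$:
$$
\Delta_{(\widehat G_\Gamma, \widehat\omega_M)} + D \;=\; \begin{pmatrix} L_0 & C^\dagger \\ C & L_1 + \delta\, I_k \end{pmatrix},
$$
where $L_0 = A - B^\dagger \Ncal_0^{-1} B$ by Claim \ref{claim:B}. Only the lower-right block depends on $\delta$, and it does so linearly in each diagonal entry.

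Expanding by multilinearity of the determinant in the $k$ columns (equivalently, the $k$ rows) indexed by $V_{\mathrm{non}}$, each such column being the sum of a fixed column of the block matrix plus $\delta$ times a standard basis vector, yields
$$
\det\!\bigl(\Delta_{(\widehat G_\Gamma, \widehat\omega_M)} + D\bigr)
\;=\; \sum_{T \subseteq V_{\mathrm{non}}} \delta^{|T|}\, \det \Phi_{T},
$$
where $\Phi_T$ denotes the principal submatrix of $\Delta_{(\widehat G_\Gamma, \widehat\omega_M)}$ obtained by deleting the rows and columns indexed by $T$. The coefficient of $\delta^k$ is picked up by the unique term $T = V_{\mathrm{non}}$, which leaves exactly the principal submatrix on $V_{\mathrm{iso}}$, namely $L_0 = A - B^\dagger\Ncal_0^{-1}B$. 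This gives the claimed identification.

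The only step requiring care is the identification of \eqref{eqn:induction} with this multilinear expansion, but this is a routine matrix identity. Accordingly, I do not anticipate a real obstacle: once Claim \ref{claim:B} is in hand, Lemma \ref{lem:reduction} reduces to reading off the top-degree coefficient in $\delta$ of a polynomial in $\delta$ obtained by a direct Leibniz-style expansion.
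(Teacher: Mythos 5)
Your argument is correct and is essentially the same as the paper's: the paper simply cites the one-step identity \eqref{eqn:induction}, which, iterated over the $k$ nonzero diagonal entries of $D$, produces exactly your multilinear expansion $\det(\Delta_{(\widehat G_\Gamma,\widehat\omega_M)}+D)=\sum_{T\subseteq V_{\mathrm{non}}}\delta^{|T|}\det\Phi_T$, and the $T=V_{\mathrm{non}}$ term is identified with $\det(A-B^\dagger\Ncal_0^{-1}B)$ via Claim \ref{claim:B}.
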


Given distinct vertices $v_i, v_j\in V(G_\Gamma)$, then since 
$G_\Gamma$ is a tree there is a unique geodesic $g_{ij}$ in $G_\Gamma$ from
$v_i$ to $v_j$. Moreover, there is a 1-1 correspondence
$\Gamma\leftrightarrow E(G_\Gamma)$.  For $\gamma\in \Gamma$, we shall say $\gamma\in g_{ij}$ if
the edge associated to $\gamma$ lies on $g_{ij}$. 

\begin{lemma} \label{lem:key}
    Fix $\varepsilon_0$, $0<\varepsilon_0<1$. 
    There are constants $\kappa\geq 1$ and $C>0$ such that
    if $\lambda(\gamma)$ is sufficiently small with respect to  $\varepsilon_0$ for all $\gamma\in \Gamma$,
     then:
    \begin{equation} \label{eqn:key1}
       \kappa^{-1} \omega_M(ij)\leq \widehat \omega_{M}(ij)\leq
        \kappa  \omega(ij)\ ,\  L_{ij}\neq 0\ ;
    \end{equation}
    \begin{equation} \label{eqn:key2}
        |\widehat \omega_{M}(ij)|\leq C\prod_{\gamma\in g_{ij}}
        \lambda(\gamma)\ , \ L_{ij}= 0\ .
    \end{equation}
\end{lemma}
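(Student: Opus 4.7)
My plan is to derive Lemma \ref{lem:key} from the explicit formula \eqref{eqn:P} combined with two inputs: uniform invertibility of $\Ncal_0$ on $V_1$, and a decay estimate for its matrix elements across successive thin collars. The uniform invertibility is immediate from Step 2 of the proof of Theorem \ref{thm:asymptotics}: the trace-class convergence $\Ncal \to \overline{\Ncal}$ on $V_1$, together with invertibility of $\overline{\Ncal}$ on $V_1$, yields $\|\Ncal_0^{-1}\|_{V_1\to V_1}\leq C_0$ uniformly as the $\ell(\gamma)\to 0$.

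The heart of the argument is a decay estimate of the form
$$
\bigl|\langle \Ncal_0^{-1}(\chi_{i,\gamma_{ik}}-\chi_{k,\gamma_{ik}}),\, \chi_{j,\gamma_{jk'}}-\chi_{k',\gamma_{jk'}}\rangle\bigr|\leq C\prod_{\gamma\in p}\lambda(\gamma),
$$
where $p$ is the set of collars strictly between the edges $\gamma_{ik}$ and $\gamma_{jk'}$ in the tree $G_\Gamma$ (the empty product equals $1$, applied when $\gamma_{ik}$ and $\gamma_{jk'}$ share a vertex). The mechanism is that on each annulus $A_\gamma$ the Fourier computation of Section \ref{sec:examples} shows that the DN operator couples its two boundary circles only by a cross term of size $\lambda(\gamma)=1/\log\rho(\gamma)$, while its diagonal is of order one. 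Writing $\Ncal_0 = D+R$ with $D$ block-diagonal with respect to the partition of $\widetilde\Gamma$ by collars and pieces $\widetilde M_i$, one has $\|D^{-1}R\| = O(\max_\gamma \lambda(\gamma))$. The Neumann series for $\Ncal_0^{-1}$ then converges once the $\lambda$'s are small with respect to $\varepsilon_0$, and each ``hop'' across a distinct collar contributes one factor of the corresponding $\lambda$.

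Given this estimate, both conclusions follow by inspection of \eqref{eqn:P}. For (ii), when $L_{ij}=0$ and $m$ is the number of edges of $g_{ij}$, the terms with $\gamma_{ik}$ and $\gamma_{jk'}$ being the two terminal edges of $g_{ij}$ pick up $m-2$ additional $\lambda$-factors from the decay estimate (one per intermediate edge); together with the two prefactor $\lambda$'s, this gives $C\prod_{\gamma\in g_{ij}}\lambda(\gamma)$. Any other choice of $(k,k')$ involves branches off $g_{ij}$ and hence even more factors of $\lambda$, and is subdominant. For (i), when $L_{ij}\neq 0$ so that $\omega_M(ij)=\lambda(\gamma_{ij})$: the term with $k=j,\, k'=i$ contributes $O(\lambda(\gamma_{ij})^2)$, while every other term falls into one of three cases --- $k=j$ with $k'\neq i$, $k\neq j$ with $k'=i$, or $k\neq j$ with $k'\neq i$. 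In the first two subcases the prefactors already contain $\lambda(\gamma_{ij})$; in the third, the decay estimate yields an additional factor of $\lambda(\gamma_{ij})$, because the path from $\gamma_{ik}$ to $\gamma_{jk'}$ in the tree must cross $\gamma_{ij}$. Thus $|P_{ij}|\leq C\lambda(\gamma_{ij})\cdot \max_\gamma \lambda(\gamma)$, which is at most $\tfrac{1}{2}\omega_M(ij)$ once the $\lambda$'s are small, giving \eqref{eqn:key1} with, for instance, $\kappa=2$.

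The main obstacle is justifying the decay estimate rigorously and uniformly over the moduli space with fixed short-geodesic set $\Gamma$. The Neumann-series heuristic is clean, but verifying that $\|D^{-1}R\|=O(\max_\gamma\lambda(\gamma))$ in operator norm (not merely against selected vectors) requires a basis of $V_1$ adapted to the collar decomposition, combined with the uniform resolvent bound above. A viable alternative is variational: represent $\Ncal_0^{-1}\phi$ as the minimizer of a Dirichlet energy on $M$ with prescribed flux across $\widetilde\Gamma$, and then track directly how the minimizing potential decays across each $A_\gamma$ by solving the mixed boundary-value problem on the collar explicitly.
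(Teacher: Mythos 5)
Your outline follows the paper's proof in essentially all respects, and you correctly identify the decay estimate across collars as the one nontrivial technical point.  Two remarks on where your write-up is imprecise and how the paper closes the gap.

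First, the decomposition.  Your phrase ``$D$ block-diagonal with respect to the partition of $\widetilde\Gamma$ by collars \emph{and} pieces $\widetilde M_i$'' is ambiguous, since each circle of $\widetilde\Gamma$ bounds one collar and one piece, and these are incompatible partitions.  The operative splitting in the paper is $\Ncal_0 = \overline\Ncal_0 + R$, where $\overline\Ncal_0$ is the jump operator for the capped-off surface $\overline M_{\widetilde\Gamma}$; it is block-diagonal only with respect to the $\widetilde M_i$-grouping, and is uniformly invertible on $V_1$ by bounded geometry.  All collar coupling is carried by $R = \oplus_{\gamma} R_\gamma$, which is block-diagonal with respect to the $\partial A_\gamma$-grouping, with explicit Fourier entries $O(\rho(\gamma)^{-|n|})$.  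In particular the cross term is \emph{exponentially} small in $1/\lambda(\gamma)$, not of size $\lambda(\gamma)$ as you assert, but this only helps: it yields $\|R_\gamma\|\leq\varepsilon_0\lambda(\gamma)$ once $\lambda(\gamma)$ is small.

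Second, the estimate itself.  Your bound $\|D^{-1}R\|=O(\max_\gamma\lambda(\gamma))$ by itself is too coarse to produce the product $\prod_{\gamma\in g_{ij}}\lambda(\gamma)$ in \eqref{eqn:key2}; one needs the per-collar block structure of $R$.  The paper proves, by induction on $p$, that for $f$ supported on $\partial\widetilde M_i$ and $g$ on $\partial\widetilde M_j$,
\begin{equation*}
\bigl|\langle(\overline\Ncal_0^{-1}R)^p f,\, g\rangle\bigr| \;\leq\; C'\varepsilon_0^p\,\|f\|\,\|g\|\prod_{\gamma\in g_{ij}}\lambda(\gamma)\, ,
\end{equation*}
because $\overline\Ncal_0^{-1}$ preserves support on the individual $\partial\widetilde M_k$'s while each application of $R$ crosses exactly one collar, so every path in $G_\Gamma$ from $v_i$ to $v_j$ accrues a factor $\leq\varepsilon_0\lambda(\gamma)$ for each collar traversed; the factor $\varepsilon_0^p$ makes the Neumann series for $\Ncal_0^{-1}=\overline\Ncal_0^{-1}\sum_{p\geq 0}(-1)^p(\overline\Ncal_0^{-1}R)^p$ converge.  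This is precisely the ``adapted basis'' route you propose as one of your two alternatives; the variational alternative is not pursued.  Given this estimate, your term-by-term accounting of $P_{ij}$ in both cases \eqref{eqn:key1} and \eqref{eqn:key2} matches what the paper does.
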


\begin{proof}
By explicit computation, we may write
$
\Ncal_0=\overline\Ncal_0+R
$,
where $R$ is diagonal with respect to the orthogonal decomposition
    $V_1\cap \oplus_{\gamma\in \Gamma} L^2(\partial A_\gamma)$, 
    and the component pieces $R_\gamma\to 0$ in trace class as
    $\lambda(\gamma)\to 0$. 
If $f_n$ denotes the $n$-th Fourier mode of a function $f$ on one boundary
component of $\partial A_\gamma$, $n\neq 0$,  then the $n$-th Fourier mode of
    $R_\gamma(f)$ on this component is
    $$
    \frac{|n|\rho^{-2|n|}}{1-\rho^{-2|n|}}f_n\ ,
    $$
    and on the other component of $\partial A_\gamma$ it is
    $$
    -\frac{2|n|\rho^{-|n|}}{1-\rho^{-2|n|}}f_{-n}\ .
    $$
    In particular, given $\varepsilon_0$ then for sufficiently small $\lambda(\gamma)$ the  norm of $R$ is  bounded by 
      $\varepsilon_0\lambda(\gamma)$. 
    Since $\overline\Ncal_0^{-1}$ is uniformly
    bounded, we have
$
    \Ncal_0=\overline\Ncal_0(1+\overline\Ncal_0^{-1}R)
$,
where $\overline\Ncal_0^{-1}R$ has small norm in trace class bounded on
    each component by $\ell(\gamma)$. Let $f$ be supported on  
    $\partial \widetilde M_i$ and $g$ on $\partial \widetilde M_j$, and fix
    $p\geq 1$. We
    claim there is a constant $C'>0$, independent of $i,j, p$ and the
    $\lambda(\gamma)$, such that
    $$
    |\langle(\overline\Ncal_0^{-1}R)^p f, g\rangle|\leq C'\varepsilon_0^p\Vert f\Vert\Vert g\Vert
    \prod_{\gamma\in g_{ij}}\lambda(\gamma)\ .  
    $$
    This follows easily by induction on $p$.
    We now apply this estimate, and use the expression
    $$
    \Ncal_0^{-1}=\overline\Ncal_0^{-1}+\sum_{p=1}^\infty (-1)^p
    (\overline\Ncal_0^{-1}R)^p \overline\Ncal_0^{-1} 
    $$
    in the definition \eqref{eqn:P} of $P_{ij}$. 
For example,  for $i\neq j$, one of the terms is  
    \begin{align}
        \sum_{k,k'}L_{ik}L_{jk'}\lambda(\gamma_{ik})\lambda(\gamma_{jk'})\langle \Ncal_0^{-1}\chi_{k,\gamma_{ik}}, \chi_{k',\gamma_{jk}}\rangle
        &= \sum_{k}L_{ik}L_{jk}\lambda(\gamma_{ik})\lambda(\gamma_{jk})
    \langle \overline\Ncal_0^{-1}\chi_{k,\gamma_{ik}}, \chi_{k,\gamma_{jk}}\rangle
        \notag \\
        &\quad +
        \sum_{k,k'}L_{ik}L_{jk'}\lambda(\gamma_{ik})\lambda(\gamma_{jk'})O\bigl(\prod_{\gamma\in
        g_{kk'}} \lambda(\gamma)\bigr)\ ,
        \label{eqn:mess}
    \end{align}
    where we have used the fact that  $\overline
    \Ncal_0^{-1}(\chi_{k,\gamma_{ik}})$ 
    is supported on $\partial\widetilde M_k$ for any $i$. 
    Notice that  the  second term
    on the right hand side of \eqref{eqn:mess} contains $\lambda(\gamma)$
    for every $\gamma\in g_{ij}$, and therefore satisfies both (i) and (ii). 
    The first term vanishes if $L_{ij}\neq 0$, and so automatically
    satisfies (i). If $L_{ij}=0$, it
    contributes $\lambda(\gamma_{ik})\lambda(\gamma_{jk})$ for each vertex
    $k$ ``subadjacent'' to $i$ and $j$ (see Figure 2). In particular, this term satisfies 
    (ii).
    The other terms in  \eqref{eqn:P} are treated similarly. 
\end{proof}

\begin{figure}
    \hspace{-.8cm}
    \begin{tikzpicture}
        \draw (0,0) -- (2,1.5);
        \draw (2,1.5) -- (4,0);
        \draw[dotted] (0,0) -- (4,0);
        \node at (0,0) {$\bullet$};
        \node at (2,1.5) {$\bullet$};
        \node at (4,0) {$\bullet$};
        \node at (-.2,-.2) {$i$};
        \node at (2,1.8) {$k$};
        \node at (4.2,-.2) {$j$};
        \node at (.5,1) {$\lambda(\gamma_{ik})$};
        \node at (3.5,1) {$\lambda(\gamma_{jk})$};
        \node at (2,-.3) {$\lambda(\gamma_{ik})\lambda(\gamma_{jk})$};
    \end{tikzpicture}
    \caption{}
\end{figure}

Finally,  we complete the proof of Proposition \ref{prop:A}. Fix $\delta>0$, and
suppose $M_\Gamma$ has
$k$ nonisolated components. 
By the result of  Lemma \ref{lem:reduction} and the expansion
\eqref{eqn:egregium},
if the weights $\lambda(\gamma)$ are sufficiently small compared to
$\delta$ for all $\gamma\in \Gamma$, then  the determinants are dominated by
the $\delta^k$ coefficients. 
It  therefore suffices to relate 
    $\det(\Delta_{(\widehat G_\Gamma, \widehat\omega_M)}+D)$ to
    $\det(\Delta_{(G_\Gamma, \omega_g)}+D)$.
    From  \eqref{eqn:key2}  and  Corollary \ref{cor:comparison},
    there is a constant $C\geq 1$ such that 
    $$
C^{-1}\det(\Delta_{(G_\Gamma, \widetilde \omega_M)}+D)
\leq
\det(\Delta_{(\widehat G_\Gamma,\widehat \omega_M)}+D)
\leq
C\det(\Delta_{(G_\Gamma, \widetilde \omega_M)}+D)
\ ,
$$
where $\widetilde\omega_M$ is the restriction of the weight function
$\widehat\omega_M$ on $\widehat G_\Gamma$ to $G_\Gamma$. 
By \eqref{eqn:key1} and Corollary 
\ref{cor:increasing}, there is constant $C_1\geq 1$ such that
$$
C_1^{-1}\det(\Delta_{(G_\Gamma, \omega_M)}+D)
\leq
\det(\Delta_{(G_\Gamma, \widetilde \omega_M)}+D)
\leq
C_1\det(\Delta_{(G_\Gamma, \omega_M)}+D)
\ .
$$
Finally, using \eqref{eqn:comp} and the same comparison 
between $\omega_M$ and $\omega_g$ gives upper and
lower bounds on
$\det(\Delta_{(G_\Gamma, \omega_g)}+D)$.
Combining these statements completes the proof of
Proposition \ref{prop:A}.

\subsection{Higher genus}
In higher genus, the graph $G_\Gamma$ will not be a tree in general. This
leads to a more complicated perturbation of the graph Laplacian. 
Nevertheless, 
it is clear from the proof of Theorem \ref{thm:asymptotics}
that there is a uniform upper bound on $\Det\Ncal(M,\widetilde\Gamma)$. 
Indeed, the discussion concerned the low eigenvalues, whereas as the
operator on the orthogonal complement in the previous section converges in trace-class. 
As a consequence, directly from Theorem \ref{thm:main}, we have the following
\begin{theorem} \label{thm:higher-genus}
Fix  positive numbers $g\geq 1$ and 
$b_1,\cdots, b_n$,  $n\geq 1$.
    Then there  is a positive  constant $C$, 
   depending only on  $c_0$ and  $(b_1,\cdots, b_n)$,
such that the following holds.
For any hyperbolic surface $M$ of genus zero with
geodesic boundary components of lengths
$b_1,\cdots, b_n$ and short geodesics $\Gamma$, 
$$
     C^{-1}\prod_{\gamma\in\Gamma}\ell(\gamma)\leq \I(M) \ .
$$
\end{theorem}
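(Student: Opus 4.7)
The plan is to apply the gluing formula Theorem \ref{thm:main} to the enlarged separating set $\widetilde\Gamma=\bigcup_{\gamma\in\Gamma}\partial A_\gamma$, exactly as in Step 1 of the proof of Theorem \ref{thm:asymptotics}, and to estimate the three resulting factors in
\[
\I(M)=\I(M_{\widetilde\Gamma})\,\bigl({\det}^\ast\Delta_{(G_{\widetilde\Gamma},\omega_g)}\bigr)\,\frac{2\,\Det_Q^\ast \Ncal_A(M,\widetilde\Gamma)}{(\Det \Ncal(M,\widetilde\Gamma))^2}.
\]
The first factor is handled exactly as in Step 5 of the proof of Theorem \ref{thm:asymptotics}: the factorization $\I(M_{\widetilde\Gamma})=\I(\mathring M_{\widetilde\Gamma})\prod_{\gamma}\I(A_\gamma)$, the identity $\I(A_\gamma)=2\pi/\log\rho(\gamma)\simeq \ell(\gamma)/\pi$, and the bounded geometry of $\mathring M_{\widetilde\Gamma}$ together yield $\I(M_{\widetilde\Gamma})\geq C^{-1}\prod_{\gamma\in\Gamma}\ell(\gamma)$. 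The second factor is two-sidedly bounded by absolute constants, since the components of $\widetilde\Gamma$ have lengths bounded uniformly away from $0$ and $\infty$.

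It remains to bound the third factor below by a positive constant. Decompose $L^2(\widetilde\Gamma)=V_0\oplus V_1$ as in Step 2 of the proof of Theorem \ref{thm:asymptotics}. On $V_1$ the operators $\Ncal$ and $\Ncal_A$ converge in trace class to their counterparts on the capped-off surface $\overline M_{\widetilde\Gamma}$, so $\Det \Ncal_0$ and $\Det_Q (\Ncal_A)_0$ are both uniformly bounded above and below. Lemma \ref{lem:subspace-det} then reduces the task to controlling the finite symmetric matrices $A-B^\dagger\Ncal_0^{-1}B$ for $\Ncal$ and its analogue for $\Ncal_A$ on $V_0$. The explicit formulas of Section \ref{sec:main} and Lemma \ref{lem:key} show that the entries of both matrices are controlled by the collar parameters $\lambda(\gamma)=1/\log\rho(\gamma)$ and are therefore uniformly bounded above; this yields the uniform upper bound $\Det\Ncal(M,\widetilde\Gamma)\leq C$ announced just before the theorem, hence a uniform lower bound on $(\Det\Ncal)^{-2}$. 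Because the decomposition of $\Ncal_A$ on $V_0$ has the same graph-Laplacian-plus-potential structure as that of $\Ncal$ (see Step 3 of the proof of Theorem \ref{thm:asymptotics}), the analogous matrix analysis gives a uniform positive lower bound on $\Det_Q^\ast\Ncal_A(M,\widetilde\Gamma)$, completing the lower bound on the ratio.

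The main obstacle, and the reason why no matching upper bound for $\I(M)$ is claimed in higher genus, is that when $g\geq 1$ the graph $G_\Gamma$ need not be a tree. Consequently, the delicate subadjacency bookkeeping of Lemma \ref{lem:key} and the tree-based comparison of Corollary \ref{cor:comparison} are unavailable, and one cannot in general identify $\det(A-B^\dagger\Ncal_0^{-1}B)$ with a shifted weighted graph-Laplacian determinant up to a bounded multiplicative error. A two-sided estimate analogous to Theorem \ref{thm:asymptotics} therefore lies beyond this approach. Fortunately, only the crude upper bound on $\Det\Ncal$ and the corresponding lower bound on $\Det_Q^\ast\Ncal_A$ are needed for Theorem \ref{thm:higher-genus}, and both of these follow from entrywise matrix bounds that are insensitive to the presence of cycles in $G_\Gamma$.
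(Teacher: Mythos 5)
Your high-level strategy matches the paper's: apply the gluing formula to $\widetilde\Gamma$, control $\I(M_{\widetilde\Gamma})$ and $\det^\ast\Delta_{(G_{\widetilde\Gamma},\omega_g)}$ exactly as in Step 5 of Theorem \ref{thm:asymptotics}, and bound the ratio of Neumann jump determinants below using only estimates that survive the loss of the tree structure. However, there is a genuine error in the last step. You claim a uniform positive lower bound on $\Det_Q^\ast\Ncal_A(M,\widetilde\Gamma)$. This is false: as noted in Step 3 of the proof of Theorem \ref{thm:asymptotics}, the operator $\Ncal_A$ has the same small eigenvalues on $V_0$ as $\Ncal$ does (coming from the $\varphi''$ component), and these behave like $\lambda(\gamma)\sim \ell(\gamma)\to 0$. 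By Lemma \ref{lem:subspace-det} the determinant factors as $\Det_Q^\ast\Ncal_A=\det(A_A-B_A^\dagger(\Ncal_A)_0^{-1}B_A)\Det_Q(\Ncal_A)_0$, and the finite matrix in the first factor has entries of order $\lambda(\gamma)$; its determinant therefore tends to zero whenever there is at least one isolated component (which certainly occurs in higher genus, e.g.\ for a handle pinching off). So $\Det_Q^\ast\Ncal_A$ is \emph{not} bounded away from zero, and "entrywise bounds" cannot give such a lower bound — they give an \emph{upper} bound only.

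What is actually used is a \emph{comparability}: $\Det_Q^\ast\Ncal_A(M,\widetilde\Gamma)\sim\Det\Ncal(M,\widetilde\Gamma)$ uniformly, so that
\[
\frac{\Det_Q^\ast\Ncal_A(M,\widetilde\Gamma)}{(\Det\Ncal(M,\widetilde\Gamma))^2}\sim\frac{1}{\Det\Ncal(M,\widetilde\Gamma)}\geq C^{-1},
\]
where the final inequality uses the uniform upper bound on $\Det\Ncal$. The comparability follows from Step 2 (trace-class convergence of both operators on $V_1$ to bounded invertible limits on the capped-off surface) together with Step 3 (the $V_0$ finite-dimensional analysis is identical for $\Ncal$ and for the $\varphi''$-piece of $\Ncal_A$); neither of these facts uses that $G_\Gamma$ is a tree, which is why this part survives to higher genus. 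Replacing your claimed lower bound on $\Det_Q^\ast\Ncal_A$ by this comparability statement fixes the argument and brings it into agreement with the paper's (admittedly terse) proof.
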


\section{Further results}

\subsection{Properness and Steklov isospectral surfaces}
Now we provide proofs of the other consequences of Theorems \ref{thm:main}
and \ref{thm:asymptotics}.

    \begin{proof}[Proof of Theorem \ref{thm:proper}]
        Let $\{M_j\}$ be a sequence  of genus zero hyperbolic surfaces
        with geodesic boundaries of fixed lengths $b_1, \ldots, b_n$.
        After passing to a subsequence, we may assume
        there is a nonempty collection 
        $\Gamma_j$  of geodesics all of  whose lengths $\ell(\gamma)\to 0$ as $j\to
        \infty$, and all other geodesics have lengths bounded away from
        zero. 
        Since the  $M_j$ have genus zero, 
         In this case, 
         $(\#\Gamma+1)\prod \omega_{M_j}(\gamma)={\det}^\ast
         \Delta_{(G_\Gamma, \omega_{M_j})}$ (see \eqref{eqn:mtt*}), and each $\omega_{M_j}(\gamma)$ is
         comparable to the length $\ell_{M_j}(\gamma)$.
        We can then use \eqref{eqn:asymptotics}
        and Corollary \ref{cor:estimate} below
         to conclude that
        \begin{equation} \label{eqn:I-estimate} 
        \I(M_j)\leq C\,        \max\left\{\ell(\gamma_1)\cdots \ell(\gamma_{k-1})
        \mid \gamma_1,\ldots,\gamma_{k-1}\in\Gamma_j\text{ distinct}\right\}
        \ ,
        \end{equation} 
        where $k$ is the number of nonisolated components of $(M_j)_{\Gamma_j}$ that intersect
        $\partial M_j$. We may assume $k$ is constant  and $C$ is independent of $j$. 
        Then \eqref{eqn:I-estimate} implies that
        \begin{equation} \label{eqn:H-estimate} 
        \Hcal(M_j)\geq (k-1)\min_{\gamma\in \Gamma_j}\log(1/\ell_j(\gamma))-\log
        C\ .
        \end{equation} 
        For a  connected tree with more than one vertex, there are at least two
        vertices having  only a single edge. 
        Since the  components of  $(M_j)_{\Gamma_j}$ must have at least
        $3$ boundary components, this implies  $k\geq 2$.
        Since $\ell_j(\gamma)\to 0$, \eqref{eqn:H-estimate} implies that
        $\Hcal(M_j)$ is unbounded along $\{M_j\}$.
        
        If $g\neq 0$, then we may find a family of surfaces $M_\varepsilon$
        with a geodesic $\gamma_\varepsilon$ of length $\ell(\varepsilon)\sim
        1/\log(1/\varepsilon)$, such that
        $M_\varepsilon\setminus\gamma_\varepsilon$ consists of two
        components: one component $M'_\varepsilon$
        containing all components of the boundary $\partial
        M(\varepsilon)$, and an isolated component $N_\varepsilon$
        obtained by removing a disk of radius $\varepsilon$ from a genus
        one Riemann surface $N$.
        As in the proof above, we may choose an annulus $A_\varepsilon$
        about $\gamma_\varepsilon$ whose boundary lengths are bounded above
        and below. Now apply Theorem \ref{thm:main} to the case where
        $\Gamma=\partial A_\varepsilon$. 
        Then the Neumann jump operators $\Ncal$ and $\Ncal_A$ both have a
        small eigenvalue $\sim 1/\log(1/\varepsilon)$ corresponding to the
        constant function $1$  on the component of $\partial A_\varepsilon$
        meeting $N_\varepsilon$, and $0$ on the other
        component of $\partial A_\varepsilon$. 
        As in the proof above,
        orthogonal to this space, the operators converge up to trace-class
        to the corresponding operators on $N$ and  the surface 
        $M'_\varepsilon$ union a disk.
        The small eigenvalue cancels the vanishing of
        $\I(A_\varepsilon)$ in the gluing formula, with the remaining
        factors bounded. Hence, $\Hcal(M_\varepsilon)$ remains bounded as
        $\varepsilon\to 0$. 
    \end{proof}

\begin{proof}[Proof of Corollary \ref{cor:steklov}]
    By \cite[Theorem 1.7]{GPPS:14}, the lengths of the boundary components
    of all the members of $\Fcal$ are equal to some fixed lengths
    $(b_1,\ldots, b_n)$.  From Theorem \ref{thm:proper},
    $\Fcal$ is contained in a compact subset of the moduli space
    $\Mbold(0;b_1,\ldots, b_n)$. 
    The result then follows as in \cite{OPS:89}.
\end{proof}

\subsection{Dirichlet and Neumann Laplacians}

\begin{corollary} \label{cor:asymptotics}
Consider the situation in Theorem \ref{thm:asymptotics}.
Let $\{\kappa_i\}$ be the collection of small eigenvalues, as in the
    Introduction.
   Then
the constant $C$ may be chosen such that
    for any hyperbolic surface $M$ of genus zero with
geodesic boundary components of lengths
$b_1,\cdots, b_n$, 
    \begin{align*}
    C^{-1} \prod_{\gamma\in \Gamma}
        \exp(-\pi^2/3\ell(\gamma))&\ell^{-3/2}(\gamma)\left(
         \det(\Delta_{(G_\Gamma,\omega_M)}+D)\prod \kappa_i\right)^{1/2}
        \leq [\Det\Delta_D]_M\\
        &\leq C \prod_{\gamma\in \Gamma}
    \exp(-\pi^2/3\ell(\gamma))\ell^{-3/2}(\gamma)\left(
                \det(\Delta_{(G_\Gamma,\omega_M)}+D)\prod \kappa_i \right)^{1/2}
                \ ,
    \end{align*}
and
    \begin{align*}
    C^{-1} \prod_{\gamma\in \Gamma}
        \exp(-\pi^2/3\ell(\gamma))&\ell^{-1/2}(\gamma)\left(\frac{\prod
        \kappa_i}{\det(\Delta_{(G_\Gamma,\omega_M)}+D)}\right)^{1/2}
    \leq [\Det^\ast\Delta_N]_M
    \\
        &\leq
    C \prod_{\gamma\in \Gamma}
        \exp(-\pi^2/3\ell(\gamma))\ell^{-1/2}(\gamma)
        \left(\frac{\prod
        \kappa_i}{\det(\Delta_{(G_\Gamma,\omega_M)}+D)}\right)^{1/2}
        \ .
    \end{align*}
    For $g\geq 1$ and $n\geq 1$, we have
$$
 [\Det\Delta_D]_M
        \leq C \prod_{\gamma\in \Gamma}
    \exp(-\pi^2/3\ell(\gamma))\ell^{-3/2}(\gamma)\left(
                \prod \kappa_i \right)^{1/2}
                \ .
$$
\end{corollary}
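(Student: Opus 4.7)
The proof will combine three ingredients. Starting from Proposition \ref{prop:I-ratio} (with $\kappa_g\equiv 0$ on geodesic boundary) and the doubling identity \eqref{eqn:double}, I can solve for $[\Det\Delta_D]_M$ and $[\Det^\ast\Delta_N]_M$ as
$$[\Det\Delta_D]_M^2 = \frac{[\Det^\ast\Delta]_{\widehat M}}{A(M)\,\I(M)},\qquad [\Det^\ast\Delta_N]_M^2 = A(M)\,\I(M)\,[\Det^\ast\Delta]_{\widehat M}.$$
Since $A(M)$ is determined by Gauss-Bonnet from the fixed data $(g;b_1,\ldots,b_n)$, it is a constant that can be absorbed into $C$.

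Next, I would analyze the double $\widehat M$. The doubling involution $\sigma$ fixes $\partial M$ pointwise, while each $\gamma\in\Gamma$ lies in the interior of $M$, so its mirror image $\sigma(\gamma)\neq\gamma$ is a second short geodesic in $\widehat M$ of the same length; thus $\Gamma_{\widehat M}=\Gamma\sqcup\sigma(\Gamma)$, and the fixed-length boundary components of $M$ are not short in $\widehat M$. The Laplacian of $\widehat M$ commutes with $\sigma$, and eigenfunctions in its $+1$ (resp.\ $-1$) eigenspace restrict to Neumann (resp.\ Dirichlet) eigenfunctions on $M$, so the small spectrum of $\widehat M$ is exactly the collection $\{\kappa_i\}$ from the Introduction. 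Wolpert's asymptotic formula \cite{Wolpert:87}, applied to this degenerating family with uniform constants over the thick part (which has bounded geometry as exploited in Section \ref{sec:asymptotics}), then yields
$$[\Det^\ast\Delta]_{\widehat M}\asymp C\prod_{\gamma'\in\Gamma_{\widehat M}}e^{-\pi^2/3\ell(\gamma')}\ell(\gamma')^{-1}\prod_i\kappa_i \;=\; C\prod_{\gamma\in\Gamma}e^{-2\pi^2/3\ell(\gamma)}\ell(\gamma)^{-2}\prod_i\kappa_i.$$

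For the genus-zero case I substitute the two-sided bound $\I(M)\asymp\prod_\gamma\ell(\gamma)/\det(\Delta_{(G_\Gamma,\omega_g)}+D)$ of Theorem \ref{thm:asymptotics} into the two identities above. The comparison \eqref{eqn:comp} between $\omega_g$ and $\omega_M$, combined with Corollary \ref{cor:increasing}, permits replacement of $\omega_g$ by $\omega_M$ in the determinant at the cost of a bounded factor. Taking square roots and collecting powers of $\ell(\gamma)$ produces the claimed two-sided bounds for $[\Det\Delta_D]_M$ and $[\Det^\ast\Delta_N]_M$. In higher genus only the lower bound $\I(M)\gtrsim\prod_\gamma\ell(\gamma)$ from Theorem \ref{thm:higher-genus} is available, but this is exactly what is needed: dividing Wolpert's upper bound for $[\Det^\ast\Delta]_{\widehat M}$ by $A(M)\I(M)$ and extracting a square root yields the stated upper bound on $[\Det\Delta_D]_M$.

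The principal obstacle is packaging Wolpert's asymptotic in the precise form above with constants uniform across the relevant moduli space. The factor $\ell(\gamma')^{-1}$ per pinching geodesic on the closed surface $\widehat M$ must be carefully tracked, together with the fact that it is doubled under $\Gamma_{\widehat M}=\Gamma\sqcup\sigma(\Gamma)$; one must also verify that every eigenvalue of $\widehat M$ lying below $1/4$ appears among the $\{\kappa_i\}$, i.e.\ that the $\pm 1$-eigenspace decomposition of $\sigma$ exhausts the small spectrum and that no small eigenvalue arises from non-separating behaviour in $\widehat M$ that is invisible to $M$. Once these points are secured, the remaining manipulations are purely algebraic.
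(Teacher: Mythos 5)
Your argument is correct and follows essentially the same route as the paper: express $[\Det\Delta_D]_M^2$ and $[\Det^\ast\Delta_N]_M^2$ via the doubling identity \eqref{eqn:double} and Proposition \ref{prop:I-ratio}, feed in Wolpert's asymptotics for $[\Det^\ast\Delta]_{\widehat M}$ (with the doubled short geodesics and the small spectrum $\{\kappa_i\}$ identified through the isometric involution), and close with Theorems \ref{thm:asymptotics} and \ref{thm:higher-genus}. You also make explicit the $\omega_g\leftrightarrow\omega_M$ exchange via \eqref{eqn:comp} and Corollary \ref{cor:increasing}, which the paper leaves implicit; otherwise the two proofs are the same.
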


\begin{proof}
Let $\widehat M$ be the double of $M$. Then 
    decomposing the spectrum with respect to the 
    isometric involution,
    the small eigenvalues for the
 Laplacian on the closed surface $\widehat M$ are exactly the collection
    $\{\kappa_i\}$. Moreover, since the boundary lengths of $M$ are fixed,
    we may ignore them in the asymptotics. Hence, the short geodesics
    of $\widehat M$ correspond to the short geodesics in $M$ and their
    mirrors in the double. 
    By \cite[Theorem 5.3]{Wolpert:87} there is a constant $B>1$ such that
    \begin{equation} \label{eqn:wolpert} 
        B^{-1}\leq \frac{[\Det^\ast\Delta]_{\widehat M}}{\prod_{\gamma\in
    \Gamma}\exp(-2\pi^2/3\ell(\gamma))\ell^{-2}(\gamma)\prod_i\kappa_i}
    \leq B \ .
    \end{equation} 
    On the other hand, from \eqref{eqn:weisberger} and \eqref{eqn:double}, we have
$$
        [\Det\Delta_D]_M^2=\frac{[\Det^\ast\Delta]_{\widehat M}}{A(M)\I(M)}
        \quad ,\quad
        [\Det\Delta_N]_M^2=[\Det^\ast\Delta]_{\widehat M}A(M)\I(M)
        \ .
$$ 
    The result now follows from \eqref{eqn:wolpert} and Theorems
    \ref{thm:asymptotics} and \ref{thm:higher-genus}.
\end{proof}

\section{Graph Laplacians} \label{sec:graphs}
\subsection{Matrix tree theorem with potential}
For the proof of Theorem \ref{thm:proper}
we require the results in this section,  perhaps well-known, but for which we have been
unable to locate precise statements in the vast literature on this subject.
 For the sake of
completeness, we therefore provide proofs here.
This will also allow us to review the construction and basic facts of graph
Laplacians. 
The main result, Theorem \ref{thm:kirchhoff}, is an extension of the
weighted matrix tree theorem of Kirchhoff for the graph Laplacian with an
added diagonal potential. 
Corollary \ref{cor:estimate} then gives  a comparison of the
determinants of the graph Laplacians with and without the 
potential.

Let  $G$ be an undirected graph with vertex and edge sets $V(G)$ and
$E(G)$,
respectively. Label the elements of $V(G)$ by $v_i\in V$, $i=1,\ldots, n$.  
For $i\neq j$ we say $(ij)\in E(G)$ if there is an edge between $v_i$ and $v_j$. 
We always assume $G$ is \emph{simple},  by which we mean there is at most one edge
between distinct vertices, and no edge from a vertex to itself. 
A weight function on $G$ is a map $\omega:E(G)\to \RBbb$. The weight defines
(and is determined by) an
associated $n\times n$ symmetric matrix:
$$\omega_{ij}:=\begin{cases} \omega(ij)& \text{ if }(ij)\in E\ ,\\
0 & \text{otherwise.}\end{cases}
$$
If we set $\displaystyle \mu_i=\sum_{(ij)\in E}\omega_{ij}$, then
the (weighted) graph Laplacian is the $n\times n$ symmetric matrix: 
\begin{equation} \label{eqn:laplacian}
(\Delta_{(G,\omega)})_{ij}=\begin{cases} 
    -\omega_{ij} &  (ij)\in E\ , \\
    \mu_i & i=j \ , \\
    0& \text{otherwise.}
\end{cases}
\end{equation}
The weight $\omega$ is positive if $\omega(ij)>0$ for all $(ij)\in E(G)$. 
When the weights are positive, the  matrix $\Delta_{(G,\omega)}$ is positive semidefinite with a zero eigenvalue of
multiplicity $1$ if $G$ is connected. 
We let ${\det}^\ast\Delta_{(G,\omega)}$ denote the product of the nonzero eigenvalues.
By a \emph{potential} we mean a function $\delta: V(G)\to \RBbb$. If
$\delta_i=\delta(v_i)$, then $\delta$ is represented by a diagonal matrix with
entries $\delta_i$, which we will typically denote by $D$. 
Given a potential, we shall say a vertex $v$ is \emph{marked} if 
$\delta(v)\neq 0$. The potential is positive if $\delta(v)$ is either zero
or positive for every $v$. 

For a connected graph $G$, let $\Sp(G)$ be the set of spanning trees of $G$, i.e.\
connected trees $T\subset G$ such that $V(T)=V(G)$.
For a tree $T$ with $\ell$ marked vertices $v_1,\ldots, v_\ell$, let
$\Escr(T;v_1,\ldots,v_\ell)$ denote the set of  collections of $(\ell-1)$
edges $e_1,\ldots, e_{\ell-1}\in E(T)$ such that each of the $\ell$ connected components
of $T\setminus e_1\cup\cdots\cup e_{\ell-1}$ contains exactly one marked vertex
$v_j$. 
Finally, for $T\in \Sp(G)$ and $S\in \Escr(T;v_1,\ldots,v_\ell)$, we define
a \emph{multiplicity}:
$$
m(T,S)=\#\left\{ (T',S')\mid T'\in \Sp(G)\ ,\ S'\in
\Escr(T';v_1,\ldots,v_\ell) \ ,\ 
T'\setminus S'=T\setminus S\right\}
$$
For elements of the set above, we shall say that $(T',S')$ \emph{is
equivalent  to} $(T,S)$.
With this understood,  we are ready to state the main result.

\begin{theorem} \label{thm:kirchhoff} 
    Let $(G,\omega)$ be a connected weighted graph  with $n$ vertices
    $v_1,\ldots, v_n$,  and let 
    $\Delta_{(G,\omega)}$ be the graph
    Laplacian. Fix
    a potential  $\delta:V(G)\to \RBbb$, $\delta_i=\delta(v_i)$ with
    associated diagonal matrix $D$. 
     Then
    \begin{equation} \label{eqn:egregium}
        \det(\Delta_{(G,\omega)} + D)
        =\sum_{T\in \Sp(G)}\sum_{\ell=1}^n\sum_{1\leq i_1<\cdots<i_\ell\leq
        n}\sum_{S\in
        \Escr(T;v_{i_1},\ldots,v_{i_\ell})}\frac{\delta_{i_1}\cdots\delta_{i_\ell}}{m(T,S)}\prod_{e\in
        E(T)\setminus S} \omega(e)
        \ .
    \end{equation}
\end{theorem}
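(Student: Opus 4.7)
The plan is to deduce Theorem \ref{thm:kirchhoff} from the classical weighted matrix tree theorem via an auxiliary vertex construction. I would extend $G$ to a graph $G^+$ by adjoining a new vertex $v_0$ together with an edge $(0i)$ of weight $\delta_i$ for each $i$ with $\delta_i\neq 0$, and weight $\omega^+$ restricted to $E(G)$ equal to $\omega$. Listing $v_0$ first, the Laplacian $\Delta_{(G^+,\omega^+)}$ has $\Delta_{(G,\omega)}+D$ as its lower-right $n\times n$ principal submatrix. The classical weighted matrix tree theorem---which is an algebraic identity valid for real weights of arbitrary sign, being a Cauchy--Binet expansion of the signed incidence matrix---then gives
\[
\det(\Delta_{(G,\omega)}+D)
= \sum_{T^+\in\Sp(G^+)} \prod_{e\in E(T^+)} \omega^+(e).
\]

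Next I would reorganize this sum according to the set of edges of $T^+$ incident to $v_0$. If $T^+$ contains exactly the edges $(0,i_1),\ldots,(0,i_\ell)$ from $v_0$, then deleting $v_0$ yields a spanning forest $F$ of $G$ with precisely $\ell$ components, the $j$-th containing $v_{i_j}$; conversely, every such marked forest determines a unique $T^+$. The product of $v_0$-edge weights is $\delta_{i_1}\cdots\delta_{i_\ell}$, and the remaining factor is $\prod_{e\in E(F)}\omega(e)$, so
\[
\det(\Delta_{(G,\omega)}+D)
= \sum_{\ell=1}^n\sum_{1\leq i_1<\cdots<i_\ell\leq n}
\delta_{i_1}\cdots\delta_{i_\ell}
\sum_{F} \prod_{e\in E(F)}\omega(e),
\]
where the innermost sum runs over spanning forests $F$ of $G$ with $\ell$ components, each containing exactly one of $v_{i_1},\ldots,v_{i_\ell}$.

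To match this with \eqref{eqn:egregium}, for fixed marked vertices I would reparametrize marked forests $F$ by pairs $(T,S)$ with $T\in\Sp(G)$ and $S\in\Escr(T;v_{i_1},\ldots,v_{i_\ell})$ satisfying $T\setminus S=F$. By the very definition of $m(T,S)$, the number of representatives of a given equivalence class is $m(T,S)$, and $\prod_{e\in E(T)\setminus S}\omega(e)=\prod_{e\in E(F)}\omega(e)$ depends only on $F$, so $\sum_{(T,S):\,T\setminus S=F}1/m(T,S)=1$. Substituting collapses the forest sum into exactly the right-hand side of \eqref{eqn:egregium}.

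The main delicate point is the combinatorial bookkeeping of the final step---verifying that the $(T,S)$-parametrization of marked forests, weighted by $1/m(T,S)$, really does count each forest with multiplicity one. Everything else is standard: the auxiliary-vertex trick reduces to classical Kirchhoff, and the partition of $\Sp(G^+)$ by $v_0$-incident edges is a routine observation.
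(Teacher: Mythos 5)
Your proof is correct and takes a genuinely different route from the paper's. The paper proves Theorem \ref{thm:kirchhoff} by a double induction on the number of vertices $n$ and the number $k$ of marked vertices: starting from the classical weighted matrix tree theorem for $k=1$, it uses the row/column expansion \eqref{eqn:induction} to reduce to the graph $\widetilde G$ obtained by deleting the marked vertex $v_k$, and then it carries out a careful combinatorial matching (Cases 1 and 2, plus the relaxation of assumptions (i) and (ii)) between the pairs $(\widetilde T,\widetilde S)$ of the smaller graph and $(T,S)$ of the original graph. Your argument instead reduces everything in one step to the classical matrix tree theorem by adjoining an auxiliary root vertex $v_0$ with pendant edges of weight $\delta_i$, observes that $\det(\Delta_{(G,\omega)}+D)$ is the cofactor of $v_0$ in the augmented Laplacian, partitions spanning trees of $G^+$ by their $v_0$-incident edges to obtain the spanning-forest expansion, and then notes that for a fixed forest $F=T\setminus S$ the multiplicity $m(T,S)$ depends only on $F$ (it equals the number of ways to extend $F$ to a spanning tree of $G$), so $\sum_{(T,S):\,T\setminus S=F}1/m(T,S)=1$ by construction. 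The one small point you leave implicit but should state is that this set is nonempty: because $G$ is connected, every spanning forest extends to at least one spanning tree, so the reparametrization does not drop any terms. Your approach is shorter and arguably more illuminating, since it reveals that the natural statement of the theorem is the rooted-spanning-forest expansion, with the paper's $(T,S)$-formulation appearing as a reparametrization whose $1/m(T,S)$ factor exactly compensates overcounting; the paper's inductive proof buys nothing extra here and is considerably more delicate. Both arguments rely on the classical weighted matrix tree theorem as the base case, and both correctly handle signs (the Cauchy--Binet expansion is an algebraic identity valid for weights of arbitrary sign, as you note).
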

Theorem \ref{thm:kirchhoff} will be proved in the next section.
First,  let us draw some conclusions.
An immediate consequence of \eqref{eqn:egregium} is the following
important
\begin{corollary} \label{cor:increasing}
    Suppose $(G,\omega)$ is a connected graph with positive weights and a
    positive potential $\delta$. 
    For $\kappa\geq 1$,  there is $C\geq 1$ depending only on
    $\kappa$, $G$, and $\delta$, such that the following holds.
For any  weight function $\widetilde \omega$ on $G$ with 
    \begin{equation} \label{eqn:kappa}
    \kappa^{-1}\omega(e)\leq \widetilde\omega(e)\leq
    \kappa\,\omega(e)
    \end{equation}
    for all $e\in E(G)$, we have
$$
    C^{-1}\det(\Delta_{(G,\omega)}+D)
    \leq \det(\Delta_{(G,\widetilde\omega)}+D)
    \leq
    C\det(\Delta_{(G,\omega)}+D)
    \ .
$$
\end{corollary}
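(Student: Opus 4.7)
The plan is to invoke the explicit formula provided by Theorem \ref{thm:kirchhoff}, which expresses $\det(\Delta_{(G,\omega)}+D)$ as a sum over triples $(T,S,(i_1,\ldots,i_\ell))$ of terms of the form
$$
\frac{\delta_{i_1}\cdots\delta_{i_\ell}}{m(T,S)}\prod_{e\in E(T)\setminus S}\omega(e).
$$
Under the hypotheses of the corollary (positive weights, nonnegative potential, connected $G$), each multiplicity $m(T,S)$ is a positive integer and each $\delta_{i_k}\geq 0$, so every term in the expansion is nonnegative. The essential observation is that the degree of each term as a monomial in the weights is
$$
|E(T)\setminus S|=(n-1)-(\ell-1)=n-\ell,
$$
which is bounded above by $n-1$ uniformly in the choice of $T$, $S$, and $\ell\geq 1$.

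Given any $\widetilde\omega$ satisfying \eqref{eqn:kappa}, replacing each factor $\omega(e)$ by $\widetilde\omega(e)$ multiplies the corresponding monomial by a number lying in $[\kappa^{-(n-\ell)},\kappa^{n-\ell}]\subset[\kappa^{-(n-1)},\kappa^{n-1}]$. Since every term is nonnegative, these bounds pass through the sum, and an application of Theorem \ref{thm:kirchhoff} on both sides yields
$$
\kappa^{-(n-1)}\det(\Delta_{(G,\omega)}+D)\leq \det(\Delta_{(G,\widetilde\omega)}+D)\leq \kappa^{n-1}\det(\Delta_{(G,\omega)}+D),
$$
so that the conclusion holds with $C=\kappa^{n-1}$ (a constant depending only on $\kappa$ and $|V(G)|$, hence on $\kappa$ and $G$).

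The only point requiring a moment's thought is the degenerate case $\delta\equiv 0$, in which case the $\ell\geq 1$ sum in \eqref{eqn:egregium} is empty, both determinants vanish since $\Delta_{(G,\omega)}$ has nontrivial kernel on a connected graph, and the asserted inequality is trivial; in all other cases at least one marked vertex $v_i$ has $\delta_i>0$, and since for every spanning tree $T$ and choice of marked vertices the set $\Escr(T;v_{i_1},\ldots,v_{i_\ell})$ is nonempty (one may always separate the marked vertices by removing $\ell-1$ edges of the tree), the sum is strictly positive and the multiplicative comparison is meaningful. There is no real obstacle here: the content of the corollary is essentially the statement that the Kirchhoff-type expansion exhibits $\det(\Delta_{(G,\omega)}+D)$ as a positive-coefficient polynomial in the edge weights of uniformly bounded degree, after which bounded multiplicative perturbations of the weights pass cleanly to bounded multiplicative perturbations of the determinant.
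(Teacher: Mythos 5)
Your proof is correct and follows the route the paper intends: the paper presents Corollary~\ref{cor:increasing} as an ``immediate consequence'' of Theorem~\ref{thm:kirchhoff} without spelling it out, and you have supplied precisely the intended argument — the expansion \eqref{eqn:egregium} is a positive-coefficient polynomial in the edge weights whose monomials have degree at most $n-1$, so the weight comparison \eqref{eqn:kappa} passes through term-by-term with $C=\kappa^{n-1}$.
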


In the following, we suppose $\delta$ has exactly $k$ nonzero entries
$\delta_1, \ldots, \delta_k$ at $v_1,\ldots, v_k$, $1\leq k\leq n$. 
Suppose first that $k=1$.
Notice that in this case, no edges are removed: in the expression
\eqref{eqn:egregium} the sum over $S$ (and therefore also the
multiplicities) is  
absent.
For $\varepsilon>0$,
    \begin{align*}
        \frac{d}{d\varepsilon}\log\det(\Delta_{(G,\omega)}+\varepsilon D)
        &=\tr\left((\Delta_{(G,\omega)}+
        \varepsilon D)^{-1}D\right)
        =\delta_1(\Delta_{(G,\omega)}+\varepsilon D)^{-1}_{11} \notag\ ; \\
        \frac{d}{d\varepsilon}\det(\Delta_{(G,\omega)}+\varepsilon D)
        &=\delta_1 \det
        ((\Delta_{(G,\omega)}+D)^{[1]})
        =\delta_1\det(\Delta_{(G,\omega)}^{[1]}) \ .
    \end{align*}
Here, we have introduced the following notation: if $A=(A_{ij})$ is an
$n\times n$ matrix, we denote by $A^{[k]}$
the $(n-1)\times(n-1)$ matrix obtained by deleting the $k$-th row and
the $k$-th column.
    Since $\det\Delta_{(G,\omega)}=0$, by integration we get
   \begin{equation} \label{eqn:red-det} 
    \det(\Delta_{(G,\omega)}+D)
        =\delta_1\det(\Delta_{(G,\omega)}^{[1]}) \ .
   \end{equation} 
    Now by the weighted matrix tree theorem (cf.\ \cite[Thm.\
    VI.27]{Tutte:01}), 
   \begin{equation} \label{eqn:mtt} 
       \det(\Delta_{(G,\omega)}^{[1]})
=\sum_{T\in \Sp(G)}\prod_{e\in E(T)} \omega(e)
\ ,
   \end{equation} 
    and so we obtain \eqref{eqn:egregium}.
    In the proof of Theorem \ref{thm:kirchhoff} below, we do not reprove 
    \eqref{eqn:mtt} but rather use it as a starting point for an inductive
    argument.

    For $k\geq 2$,
the appearance of the multiplicity $m(T,E)$ is a new feature in
the generalized matrix tree expression \eqref{eqn:egregium}. Its necessity
is immediate from the $\delta^n$ term in case $k=n$, $\delta_1=\cdots=\delta_n=\delta$.
More illuminating is   the simple
example in Figure 3.
\begin{figure}
    \begin{tikzpicture}
        \draw (0,.08) -- (2,1);
        \draw (0,-.07) -- (2,-1);
        \draw  (2,1) -- (4,.08);
        \draw (2,-1) -- (4,-.07);
        \node at (0,0) {$\circ$};
        \node at (2,1) {$\bullet$};
        \node at (2,-1) {$\bullet$};
        \node at (4,0) {$\circ$};
        \node at (-.3,0) {$v_1$};
        \node at (4.3,0) {$v_2$};
        \node at (1.6,1.2) {$v_3$};
        \node at (2.5,-1.2) {$v_4$};
    \end{tikzpicture}
    \caption{}
\end{figure}
Here, the weights are $\delta(v_i)=\delta_i$, $i=1,2$ and zero otherwise.
Then one calculates the $\delta_1\delta_2$ term directly:
\begin{equation} \label{eqn:cool-example}
\det(\Delta_{(G,\omega)}+D)=\delta_1\delta_2(\omega_{13}+\omega_{23})(\omega_{14}+\omega_{24})
+ \cdots
\end{equation}
There are $4$ spanning trees for $G$, obtained by removing a single edge.
For each tree $T$, there are two possible edges $S$ that can be removed to separate
$v_1$ from $v_2$.  Thus there are $8$ terms in the $\delta_1\delta_2$ sum
in \eqref{eqn:egregium}. But the
multiplicity of each pair $(T,S)$ is clearly $2$, corresponding to switching the edge removed to
define the tree $T$ with the edge $S$ removed from $T$. The $8$ terms thus
reduce to the $4$ terms in \eqref{eqn:cool-example}.

    A second special case is where $G$ is a tree. 
For  $\delta_i>0$, $i=1,\ldots, k$, and zero otherwise, from 
    \eqref{eqn:egregium} we have 
    \begin{equation} \label{eqn:min} 
\det(\Delta_{(G,\omega)}+D)\geq
\delta_1\cdots\delta_k\min\left\{\omega(e_1)\cdots\omega(e_{n-k}) \mid
e_1,\ldots, e_{n-k}\in E(G)\ \text{distinct}\right\}
   \end{equation} 
    We are mostly interested in the case where  the edge weights are much
    smaller than the $\delta_i$'s. The estimate above can probably be
    improved. However, notice that in the example 
    \eqref{eqn:cool-example}, $\omega_{13}\omega_{23}$ (or
    $\omega_{14}\omega_{24}$) do not appear in the $\delta_1\delta_2$ term. 
    If $\omega_{14}$ and $\omega_{24}$ are big compared to the other two
    weights, we cannot replace $\min$ by $\max$ in \eqref{eqn:min}.

    For a connected graph,  the weighted matrix tree theorem
    (the equality \eqref{eqn:mtt}, which holds for any principal minor)
    implies,
    \begin{equation} \label{eqn:mtt*}
        {\det}^\ast\Delta_{(G,\omega)}
=n\sum_{T\in \Sp(G)}\prod_{e\in E(T)} \omega(e) \ .
    \end{equation}
   In case   $G$ is a tree, there is only one term in the sum. 
     Hence, from \eqref{eqn:red-det}, \eqref{eqn:mtt}, and  \eqref{eqn:min} we obtain
\begin{corollary} \label{cor:estimate}
    Let $(G,\omega)$ be a weighted tree with $n$ vertices, and suppose
 $D$ has exactly $k\geq 1$ nonzero entries $\delta_1,\ldots,\delta_k>0$. 
    Then  if $k=1$,
    $$
    {\det}^\ast\Delta_{(G,\omega)}=\frac{n}{\delta_1}
\det(\Delta_{(G,\omega)}+D) \ ,
$$
and if $k\geq 2$,
$$
    \frac{{\det}^\ast(\Delta_{(G,\omega)})}{\det(\Delta_{(G,\omega)}+D)}\leq
        \frac{n}{\delta_1\cdots \delta_k}\,
        \max\left\{\omega(e_1)\cdots\omega(e_{k-1}) \mid
e_1,\ldots, e_{k-1}\in E(G)\ \text{distinct}\right\} \ .
$$
\end{corollary}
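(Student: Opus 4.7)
The plan is to specialize Theorem \ref{thm:kirchhoff} and the weighted matrix tree formula \eqref{eqn:mtt*} to the case where $G$ is itself a tree. In that case $\Sp(G)=\{G\}$, so the outer spanning-tree sum collapses to a single term; moreover a pair $(T',S')$ is equivalent to $(G,S)$ only when $T'=G$ and $S'=S$, forcing the multiplicity $m(G,S)=1$ throughout. These two observations drive both parts of the corollary.

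For the case $k=1$, equation \eqref{eqn:red-det} gives $\det(\Delta_{(G,\omega)}+D) = \delta_1 \det(\Delta_{(G,\omega)}^{[1]})$, and \eqref{eqn:mtt} specialized to the tree reads $\det(\Delta_{(G,\omega)}^{[1]}) = \prod_{e\in E(G)}\omega(e)$. Comparing with ${\det}^\ast\Delta_{(G,\omega)} = n\prod_{e\in E(G)}\omega(e)$ from \eqref{eqn:mtt*} immediately produces the claimed ratio $n/\delta_1$.

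For $k\geq 2$ the numerator is again $n\prod_{e\in E(G)}\omega(e)$. For the denominator I would invoke Theorem \ref{thm:kirchhoff}; since all $\omega(e)$ and all $\delta_i$ are positive, every summand in \eqref{eqn:egregium} is nonnegative. Discarding all but the $\ell=k$, $\{i_1,\ldots,i_\ell\}=\{1,\ldots,k\}$ term yields
\[
\det(\Delta_{(G,\omega)}+D) \;\geq\; \delta_1\cdots\delta_k \sum_{S\in \Escr(G;v_1,\ldots,v_k)}\prod_{e\in E(G)\setminus S}\omega(e).
\]
That $\Escr(G;v_1,\ldots,v_k)$ is nonempty follows from an easy induction on $k$: find two marked vertices joined by a path containing no other marked vertex, remove one edge of that path, and iterate on the two resulting subtrees.

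Taking ratios and using the identity $\prod_{e\in E(G)}\omega(e)/\prod_{e\in E(G)\setminus S}\omega(e) = \prod_{e\in S}\omega(e)$, any $S\in\Escr(G;v_1,\ldots,v_k)$ then yields
\[
\frac{{\det}^\ast\Delta_{(G,\omega)}}{\det(\Delta_{(G,\omega)}+D)} \;\leq\; \frac{n}{\delta_1\cdots\delta_k}\prod_{e\in S}\omega(e).
\]
Each such $S$ consists of $k-1$ distinct edges of $E(G)$, so its product is bounded by $\max\{\omega(e_1)\cdots\omega(e_{k-1}) : e_1,\ldots,e_{k-1}\in E(G)\ \text{distinct}\}$, giving the stated inequality. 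The only real care needed is the bookkeeping $m(G,S)=1$ and the nonemptiness of $\Escr$, both of which are immediate in the tree setting, so no serious obstacle is anticipated.
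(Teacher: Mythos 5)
Your proof is correct and follows essentially the same path as the paper: both specialize Theorem~\ref{thm:kirchhoff} and \eqref{eqn:mtt*} to a tree, use \eqref{eqn:red-det} and \eqref{eqn:mtt} for the exact $k=1$ identity, and for $k\ge 2$ keep only the top-degree $\delta_1\cdots\delta_k$ term of \eqref{eqn:egregium} and compare to a $\max$ of $(k-1)$-fold edge products (the paper packages this step as \eqref{eqn:min}). Your explicit remarks that $m(G,S)=1$ on a tree and that $\Escr(G;v_1,\ldots,v_k)\neq\emptyset$ fill in details the paper leaves implicit.
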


Finally, 
an important technical result for this paper is the following, which is used in
Section \ref{sec:asymptotics}. Let $(G,\omega)$ be a connected, weighted
tree with $n$ vertices and positive weights. 
Let $\widehat G$  be 
  the complete graph on the vertices of $G$.
Fix $\kappa_0>0$.
Suppose $\widehat \omega$ is a system of weights (not necessarily
positive) for $\widehat G$ satisfying
\begin{enumerate}
    \item $\widehat\omega(e)=\omega(e)$ for all $e\in E(G)$;
    \item For $\widehat e\in E(\widehat G)\setminus E(G)$  between vertices
        $v_1$ and $v_2$, 
        $$
        |\widehat\omega(\widehat e)|\leq \kappa_0\omega(e)
        $$
        for any $e\in E(G)$ along the geodesic in $G$ from $v_1$ to $v_2$. 
\end{enumerate}

\begin{corollary} \label{cor:comparison}
    Fix $(G,\omega)$ as above, and let  $\delta:V(G)\to \RBbb_{\geq 0}$ be a
     positive potential.
     Then for $\kappa_0>0$ sufficiently small (depending upon $(G,\omega)$
     and $\delta$), 
      there is a constant $C\geq 1$ depending
    only on $G$, $\delta$,  and $\kappa_0$,  such that if $(\widehat
    G,\widehat\omega)$ satisfies (i) and (ii)   above,
    $$
    C^{-1}\det(\Delta_{(G,\omega)}+D)\leq \det(\Delta_{(\widehat G,
    \widehat\omega)}+D) \leq C\det(\Delta_{(G,\omega)}+D)
    \ .
    $$
\end{corollary}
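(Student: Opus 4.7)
The plan is to treat Corollary \ref{cor:comparison} as a small matrix perturbation of $\Delta_{(G,\omega)}+D$, rather than as a term-by-term comparison in the matrix tree expansion of Theorem \ref{thm:kirchhoff}. The starting observation is that all entries of $\Delta_{(\widehat G,\widehat\omega)}-\Delta_{(G,\omega)}$ come from weights $\widehat\omega(\widehat e)$ with $\widehat e\in E(\widehat G)\setminus E(G)$, and these are uniformly small by condition (ii).

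First I would reduce to the case $D\neq 0$: if $D$ is the zero matrix, both determinants vanish because the constant vector $\mathbf{1}$ lies in the kernel of any graph Laplacian regardless of the signs of its weights, so the inequality is trivial. Otherwise, since $(G,\omega)$ is a connected positively-weighted tree, $\Delta_{(G,\omega)}$ is positive semidefinite with one-dimensional kernel $\RBbb\cdot \mathbf{1}$, and a single positive $\delta_i$ shifts that zero eigenvalue off the origin. Thus $A:=\Delta_{(G,\omega)}+D$ is positive definite, hence invertible, with operator and inverse-operator norms depending only on $(G,\omega,\delta)$.

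Next I would decompose
$$
\Delta_{(\widehat G,\widehat\omega)}+D = A+B,
$$
where $B$ is the matrix built from the formula \eqref{eqn:laplacian} applied to the subgraph of non-$G$ edges with weights $\widehat\omega(\widehat e)$ for $\widehat e\in E(\widehat G)\setminus E(G)$. By hypothesis (ii), each such weight satisfies $|\widehat\omega(\widehat e)|\leq \kappa_0\,\omega_{\max}$, where $\omega_{\max}:=\max_{e\in E(G)}\omega(e)$. A Gershgorin-type estimate then yields $\|B\|_{\mathrm{op}}\leq C_1\kappa_0$ for some $C_1=C_1(G,\omega)$, and therefore $\|A^{-1}B\|_{\mathrm{op}}\leq C_2\kappa_0$ with $C_2=C_2(G,\omega,\delta)$.

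Finally I would combine the bounds via the identity
$$
\det(\Delta_{(\widehat G,\widehat\omega)}+D) = \det(A)\,\det(I+A^{-1}B),
$$
choosing $\kappa_0$ small enough that $C_2\kappa_0\leq 1/2$. The eigenvalues of $I+A^{-1}B$ then lie in $[1/2,3/2]$, so $\det(I+A^{-1}B)\in [2^{-n},(3/2)^n]$, which yields the two-sided estimate with $C=2^n$. The main subtlety is the uniform norm bound on $B$: since $E(\widehat G)\setminus E(G)$ may be dense (the complete graph minus a tree), condition (ii) is essential to ensure that $\|B\|_{\mathrm{op}}$ scales linearly in $\kappa_0$ with a constant depending only on the fixed data $(G,\omega)$, rather than growing with the perturbation itself.
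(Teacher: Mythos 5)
Your approach — treating $\Delta_{(\widehat G,\widehat\omega)}+D$ as a perturbation $A+B$ of $A=\Delta_{(G,\omega)}+D$ and estimating $\det(I+A^{-1}B)$ via $\|A^{-1}B\|_{\mathrm{op}}\leq\|A^{-1}\|_{\mathrm{op}}\|B\|_{\mathrm{op}}$ — is genuinely different from the paper's, which compares the two sides term by term in the matrix-tree expansion of Theorem \ref{thm:kirchhoff}. However, as written, your argument has a real gap: the constant $C_2$ in $\|A^{-1}B\|_{\mathrm{op}}\leq C_2\kappa_0$ depends on $\omega$ through $\|A^{-1}\|_{\mathrm{op}}$, whereas the corollary asserts that $C$ depends only on $G$, $\delta$, and $\kappa_0$ — and this uniformity in $\omega$ is precisely what is needed when the corollary is invoked in the proof of Proposition \ref{prop:A}, where the weights $\omega_M(ij)=\lambda(\gamma_{ij})\to 0$.

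The point is quantitative: as the weights degenerate, $A$ develops small eigenvalues and $\|A^{-1}\|_{\mathrm{op}}$ diverges, while your bound on $B$ only uses $|\widehat\omega(\widehat e)|\leq\kappa_0\omega_{\max}$, which does not vanish at the same rate. Take a star with center $v_0$, one leaf $v_1$ with $\omega(0,1)=\eta$ small, the remaining leaves with weight $1$, and $\delta$ supported at $v_1$. Then $\det A=\delta\,\eta\,\prod_{j\geq 2}\omega(0,j)$ and the smallest eigenvalue of $A$ is $\asymp\eta$, so $\|A^{-1}\|_{\mathrm{op}}\asymp 1/\eta$; but a non-$G$ edge between two "far" leaves has weight up to $\kappa_0$, so $\|B\|_{\mathrm{op}}\asymp\kappa_0$, and $\|A^{-1}\|_{\mathrm{op}}\|B\|_{\mathrm{op}}\asymp\kappa_0/\eta$ is unbounded. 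The determinant ratio is nevertheless $1+O(\kappa_0)$ — the small-eigenvector of $A$ is nearly constant on the component of $G$ away from $v_1$ and hence is annihilated by $B$ — but the crude product-of-norms bound cannot see this cancellation. The paper avoids this by exploiting the full strength of condition (ii) (domination relative to \emph{every} edge on the geodesic, not merely $\omega_{\max}$): each term in the expansion \eqref{eqn:egregium} for $\widehat G$ containing a non-$G$ edge is dominated, after the iterative edge replacement described in the proof, by a term for $G$ with a factor of $\kappa_0$, and the count of such terms is a purely combinatorial quantity depending on $n$ alone. If you want to salvage a perturbation-style argument, you would need to replace $\|A^{-1}\|_{\mathrm{op}}\|B\|_{\mathrm{op}}$ with an estimate that encodes the alignment between the near-kernel of $A$ and $\ker B$, which effectively recapitulates the combinatorial comparison.
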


\begin{proof}
    We wish to compare the terms appearing in \eqref{eqn:egregium} 
    for $(G,\omega)$ and $(\widehat G, \widehat\omega)$. 
Let $\widehat T$ be a spanning tree for $\widehat G$.  
        Consider a
    component subtree $\widehat T'$ of  $\widehat T\setminus \widehat S$,
    for a separating set of edges $\widehat S$.     Suppose that  $\widehat T'$ contains an edge $\widehat e$ not in $E(G)$. 
    Let $v_1$ and $v_2$ be the  two vertices of $\widehat e$. Since
    $\widehat T'$ contains only one marked vertex, we may assume $v_1$ is
    not marked. Let $g$ be the geodesic in $G$ from $v_1$ to $v_2$.
    Because $\widehat T'$ is a tree, we cannot have $g\subset \widehat T'$,
    since then $g\cup \widehat e$ would be a cycle. 
    Hence, 
    let
    $e$ be the first edge  in $g$ (going from $v_1$ to $v_2$) that is not contained in $\widehat T'$. 
Then if we replace $\widehat e$ by $e$ we obtain a new spanning tree
$\widehat T_1$
    (with the same separating set $\widehat S$) with fewer edges  that are not in
    $E(G)$. 
    Moreover,  by (ii) the product of the edges in $\widehat T\setminus \widehat S$ is
    strictly less (in absolute value) than that of  $\widehat T_1\setminus
    \widehat S$.
    Continuing in this way, we find a new spanning tree $\widehat T_\bullet$  of $\widehat G$ 
     such that $\widehat T_\bullet\setminus \widehat S\subset G$. Now
    there is a unique separating set $S\subset E(G)$ such that $\widehat
    T_\bullet\setminus \widehat S=G\setminus S$. 
    Thus, the term in the expansion \eqref{eqn:egregium} for $\widehat G$
    corresponding to $(\widehat T, \widehat
    S)$ is dominated by the term $(G,S)$ in the expansion for $G$.  
    This completes the proof. 
\end{proof}

\subsection{Proof of Theorem \ref{thm:kirchhoff}}
The proof is by induction on $n$ and $k =$ the number of nonzero entries of
$D$. Thus, we assume \eqref{eqn:egregium} holds for graphs with fewer than
$n$ vertices and any $D$. 
We have seen in \eqref{eqn:mtt}
that by the usual weighted matrix tree theorem,  the result holds for all $n$ and $k=1$. 
Suppose now that  $k\geq 2$, and that \eqref{eqn:egregium} holds for $n$
vertices and potentials with  fewer than $k$ nonzero entries.
 We must show that for $D$ with exactly $k$ nonzero entries,
    \begin{equation} \label{eqn:egregium-k}
        \det(\Delta_{(G,\omega)} + D)
        =\sum_{T\in \Sp(G)}\sum_{\ell=1}^k\sum_{1\leq i_1<\cdots<i_\ell\leq
        k}\sum_{S\in
        \Escr(T;v_{i_1},\ldots,v_{i_\ell})}\frac{\delta_{i_1}\cdots\delta_{i_\ell}}{m(T,S)}\prod_{e\in
        E(T)\setminus S} \omega(e)
        \ .
    \end{equation}
Set $\delta_k=\delta(v_k)$.
By the same argument used above to derive \eqref{eqn:mtt}, we have
\begin{equation}  \label{eqn:induction}
    \det(\Delta_{(G,\omega)}+D)=\det(\Delta_{(G,\omega)}+D)\bigr|_{\delta_k=0}+
    \delta_k\det((\Delta_{(G,\omega)}+D)^{[k]})
    \ .
\end{equation}
We view the second term on the right hand side as the determinant of a new
weighted graph $\widetilde G$ with potential $\widetilde D$,  obtained by deleting $v_k$ and all edges at
$v_k$. The weight function  $\widetilde\omega$ is the restriction of $\omega$
to $\widetilde G$.
Importantly, since the edges of $v_k$ have been deleted, the $\mu_j$ differ
from $\widetilde\mu_j$, and 
$\widetilde D$  is determined by the rule
\begin{equation} \label{eqn:delta}
\widetilde \delta_j=\begin{cases}
    \delta_j & (jk)\not\in E(G)\ , \\
    \delta_j+\omega_{jk} & (jk)\in E(G) \ ,
\end{cases}
\end{equation}
for all $v_j\in \widetilde G$. 
With this interpretation we have
\begin{equation} \label{eqn:det-minor}
    \det((\Delta_{(G,\omega)}+D)^{[k]}) 
=
\det(\Delta_{(\widetilde G,\widetilde\omega)}+\widetilde D)
\ .
\end{equation}
By induction on $k$, we may assume the first term on the right hand side of   \eqref{eqn:induction}
satisfies \eqref{eqn:egregium}, with $\delta_k$ set to zero. 
This accounts for all the terms on the right
hand side of \eqref{eqn:egregium-k} where $i_\ell\leq k-1$. The remaining
terms all have $i_\ell=k$, and therefore a factor of $\delta_k$. 
Given \eqref{eqn:det-minor}, in order
to complete the proof we must show that 
\begin{align}
    \begin{split}\label{eqn:main-equation}
\det(\Delta_{(\widetilde G,\widetilde\omega)}+\widetilde D)
        &=\sum_{\ell=1}^k\sum_{1\leq i_1<\cdots<i_{\ell-1}\leq
        k-1}\delta_{i_1}\cdots\delta_{i_{\ell-1}}\\
        &\hspace{-1cm}\times \sum_{T\in \Sp(G)}\sum_{S\in
        \Escr(T;v_{i_1},\ldots,v_{i_{\ell-1}},v_k)}\frac{1}{m(T,S)}\prod_{e\in
        E(T)\setminus S} \omega(e)
        \ .
    \end{split}
\end{align}
In the sum above, $\ell=1$ is taken to mean that no $\delta_i$'s appear. 
Let $w_1,\ldots, w_m$ be the vertices adjacent to $v_k$, with edges $f_1,
\ldots, f_m$.  See Figure 4.
\begin{figure}
    \begin{tikzpicture}
        \draw (.03,.06) -- (2,1);
        \draw (.07,.04) -- (2,.5);
        \draw (.08,0) -- (2,0);
        \draw (.03,-.06) -- (2,-1);
        \draw[dotted] (2,-.2) -- (2,-.8);
        \node at (0,0) {$\circ$};
        \node at (2,1) {$\bullet$};
        \node at (2,.5) {$\bullet$};
        \node at (2,0) {$\bullet$};
        \node at (2,-1) {$\bullet$};
        \node at (4,1) {$\circ$};
        \node at (4,0) {$\circ$};
        \node at (4,-1) {$\circ$};
        \draw (2,1) -- (2.5,1);
        \draw (2,.5) -- (2.5,.5);
        \draw (2,0) -- (2.5,0);
        \draw (2,-1) -- (2.5,-1);
        \draw[dotted] (2.5,1) -- (3,1);
        \draw[dotted] (2.5,.5) -- (3,.5);
        \draw[dotted] (2.5,0) -- (3,0);
        \draw[dotted] (2.5,-1) -- (3,-1);
        \node at (-.5,0) {$v_k$};
        \node at (4.5,1) {$v_1$};
        \node at (4.5,0) {$v_2$};
        \node at (4.8,-1) {$v_{k-1}$};
        \node at (2.3,1.2) {$w_1$};
        \node at (2.3,-1.3) {$w_m$};
        \node at (1,.8) {$f_1$};
        \node at (1,-.8) {$f_m$};
    \end{tikzpicture}
    \caption{}
\end{figure}
Let us first assume that
\begin{enumerate}
    \item $G\setminus\{v_k\}$ \emph{is connected};
    \item \emph{None of the $w_j$'s are marked in $G$}.
\end{enumerate}
Thus, by \eqref{eqn:delta}, $\widetilde G$ has $(k+m-1)$ marked points at
$v_1,\ldots, v_{k-1}$ and $w_1,\ldots, w_m$.  
Since $\widetilde G$ has $(n-1)$ vertices and we have assumed the result
holds in this case for all $k$,
by induction  we have
\begin{align}
    \begin{split} \label{eqn:hat}
\det(\Delta_{(\widetilde G,\widetilde\omega)}+\widetilde D)
        &=\sum_{\substack{\ell=1,\ldots, k\\ \ell'=0,\ldots,
        m}}\sum_{\substack{1\leq i_1<\cdots<i_{\ell-1}\leq k-1\\ 
        1\leq j_1<\cdots<j_{\ell'}\leq m}}
        \delta_{i_1}\cdots\delta_{i_{\ell-1}}
                \\
        &\hspace{-1cm}\times 
        \sum_{\widetilde T\in \Sp(\widetilde G)}\sum_{\widetilde S\in
        \Escr(\widetilde T;v_{i_1},\ldots,v_{i_{\ell-1}}, w_{j_1}, \ldots,
        w_{j_{\ell'}})}\frac{\widetilde\delta_{j_1}\cdots\widetilde\delta_{j_{\ell'}}
}{m(\widetilde T,\widetilde S)}\prod_{e\in
        E(\widetilde T)\setminus \widetilde S} \omega(e)
        \ ,
    \end{split}
\end{align}
where by $\ell'=0$ we mean no $\widetilde\delta_j$'s appear, and in the sum
we do not allow both $\ell=1$ and $\ell'=0$. 
In order to prove the equality of the right hand sides of \eqref{eqn:main-equation} and
\eqref{eqn:hat}, 
for a fixed  choice of $i_1,\ldots, i_{\ell-1}$, we must
find a correspondence between trees and edge sets in $\widetilde G$ and $G$,
modulo equivalences.
\medskip

\noindent{\bf Case 1}.
Suppose first that $\ell'\geq 1$. 
Let $\widetilde T\in \Sp(\widetilde G)$, $\widetilde S\in \Escr(\widetilde
T;i_1,\ldots, i_{\ell-1}, j_1,\ldots, j_{\ell'})$, 
so that $\# \widetilde S=\ell+\ell'-2$.
Let $\widetilde e_1,\ldots, \widetilde e_{\ell'-1}\in \widetilde S$ be the edges
that separate $w_{j_1},\ldots, w_{j_{\ell'}}$. To be precise, there is a
unique geodesic $g_{12}$ in $\widetilde T$ from $w_{j_1}$ to $w_{j_2}$, and by
definition of the set $\Escr$  there is an edge in $\widetilde S$ that is a
segment of $g_{12}$.
 Choose $\widetilde e_1$ to be the nearest such edge to $w_{j_1}$ in
the simplicial metric. Now consider the geodesic $g_{23}$ from $w_{j_2}$ to
$w_{j_3}$. This may or may not be separated by $\widetilde e_1$. If it is,
then $g_{23}$ intersects the geodesic $g_{13}$ from $w_{j_1}$ to $w_{j_3}$. We then
choose $\widetilde e_2$ to be the nearest edge to $w_{j_1}$  along this
geodesic. If $g_{23}$ is not separated, choose $\widetilde e_2$ to be
the nearest edge to $w_{j_2}$. Continuing in this way, we determine the
collection
$\widetilde e_1,\ldots, \widetilde e_{\ell'-1}\in \widetilde S$. Now define
$T\subset G$ by
$$
T=\left(\widetilde T\setminus \widetilde e_1\cup\cdots\cup \widetilde e_{\ell'-1}
\right)\cup f_{j_1}\cup\cdots \cup f_{j_{\ell'}} \cup\{v_k\}\ .
$$
We claim that $T$ is a connected tree. Being a subset of $\widetilde T$, 
$T\cap\widetilde G$
is a tree. By construction, $w_{j_1},\ldots, w_{j_{\ell'-1}}$ are in
distinct components of $T\cap\widetilde G$. It follows that $T$ is a tree as well. 
That $T$ is connected follows from the fact that the connected components
of $\widetilde T\setminus \widetilde e_1\cup\cdots\cup \widetilde e_{\ell'-1}$
are in 1-1 correspondence with the $\{w_{j_i}\}$. 
Now 
the remaining $\ell-1$ edges in $\widetilde S$ -- let us denote them
$e_1,\ldots, e_{\ell-1}$ -- provide an element $S\in
\Escr(T; i_1,\ldots, i_{\ell-1}, v_k)$. 
Indeed, removing $e_1,\ldots, e_{\ell-1}$ separates the 
$v_{i_1},\ldots, v_{i_{\ell -1}}$ 
from all the $w_{j_1},\ldots,w_{j_{\ell'-1}}$, and further removing
$\widetilde e_1\cup\cdots\cup\widetilde e_{\ell'-1}$ separates the $w_{j_1},\ldots,w_{j_{\ell'-1}}$ from themselves in $\widetilde T$. 
It follows that in $T$, $v_k$ is separated from 
the $v_{i_1},\ldots, v_{i_{\ell -1}}$ 
as well. It is clear that equivalent pairs $(\widetilde
T,\widetilde S)$ give equivalent counterparts $(T,S)$. 
Indeed, 
$$
T\setminus S=(\widetilde T\setminus \widetilde S)\cup f_{j_1}\cup\cdots \cup f_{j_{\ell'}}
\cup\{v_k\}\ .
$$
This construction may be reversed.
Starting from the pair $(T,S)$, we construct  $(\widetilde T,\widetilde S)$ as
follows. Let $f_{j_1}, \ldots, f_{j_{\ell'}}$ be all the  edges in $T\setminus S$
from  $v_k$. 
The first step is to replace $(T,S)$ with an equivalent pair $(T',S')$
so that $f_{j_1}, \ldots, f_{j_{\ell'}}$ are the only edges from $v_k$ in
$T'$.  Let $f_p\in S$ be another such edge, to $w_p$.
Let $e\in E(\widetilde G)$ be the edge realizing the minimal distance from
the component of $T\cap \widetilde G$ containing $w_p$ to the other components. 
Then if we let $T'=(T\setminus f_p)\cup e$, $S'=(S\setminus \{f_p\})\cup \{e\}$,  
then clearly $T'$ is a tree. 
Hence, we may assume, up to equivalence, that the edges in $T$ from $v_k$ are not
in $S$. 
Now the components of $T\cap \widetilde G$ are in 1-1 correspondence with the
$w_{i_j}$.  Let $\widetilde e_1, \ldots , \widetilde e_{\ell'-1}$ be edges in
$\widetilde G$ minimizing the distances between these components. 
We set 
\begin{align*}
    \widetilde T&=(T\cap \widetilde G)\cup \widetilde
e_1\cup \cdots \cup \widetilde e_{\ell'-1}
\ , \\
    \widetilde S&= \left\{ \widetilde e_1, \ldots,  \widetilde e_{\ell'-1}, e_1,
\ldots, e_{\ell-1}\right\} \ .
\end{align*}
Then the pair $(\widetilde T,\widetilde S)$  is the desired inverse, modulo
equivalence of the previous construction. 
Finally, notice from \eqref{eqn:delta}  that in this construction,
$\widetilde\delta_{j_i}=\omega(f_{j_i})$.  Hence,
$$
\prod_{e\in T\setminus S}\omega(e)=\widetilde\delta_{j_1}\cdots
\widetilde\delta_{j_{\ell'}}\prod_{e\in \widetilde T\setminus\widetilde S}\omega(e)
\ .
$$
We  have therefore found a correspondence of terms in \eqref{eqn:main-equation} 
with some $f_j\in T\setminus S$, and terms in 
\eqref{eqn:hat} with $\ell'\geq 1$. As seen above, equivalent pairs $(\widetilde
T,\widetilde S)$ give equivalent counterparts $(T,S)$. 
\medskip

\noindent{\bf Case 2}. Now suppose $\ell'=0$, i.e.\  none of the
points $w_1,\ldots, w_m$ are marked. 
Note that by our rule  this forces $\ell\geq 2$.
Let $\widetilde T\in \Sp(\widetilde G)$, $\widetilde S\in \Escr(\widetilde
T;i_1,\ldots, i_{\ell-1})$, 
so that now $\# \widetilde S=\ell-2$.
If nonempty, 
enumerate the elements of $\widetilde S$ by $\widetilde e_1,\ldots, \widetilde
e_{\ell-2}$. Write a disjoint union
$$
\{w_1,\ldots, w_m\}=\Ccal_1\sqcup \cdots\sqcup \Ccal_q
\ ,
$$
so that each $\Ccal_i$ lies in a distinct  connected component of $\widetilde T\setminus \widetilde
e_1\cup\cdots\cup \widetilde e_{\ell-2}$. 
As in the previous case, we may choose a subset of $\widetilde S$, which
after renumbering we
assume to be $\widetilde e_1,\ldots, \widetilde e_{q-1}$,
so that each $\Ccal_i$ lies in a distinct  connected component of $\widetilde T\setminus \widetilde
e_1\cup\cdots\cup \widetilde e_{q-1}$. 
For each $\Ccal_i$, choose an edge $f_{j_i}$ from one of the elements of
$\Ccal_i$ to $v_k$. 
We obtain a tree $T\in\Sp(G)$ by adding the edges $f_{j_1},\ldots,
f_{j_q}$ to $\widetilde T$, and deleting $\widetilde e_1,\ldots ,\widetilde e_{q-1}$.
The new edge set is
$$
S=\{f_{j_1},\ldots, f_{j_q}, \widetilde e_q, \ldots, \widetilde e_{\ell-2}\}\in
\Escr(T; v_{i_1}, \ldots, v_{i_{\ell-1}}, v_k)
\ .
$$
Clearly, the choice of $f_{j_i}$'s give equivalent pairs $(T,S)$.
Similarly, equivalent pairs $(\widetilde T,\widetilde S)$ give equivalent pairs
$(T,S)$. 
In this case, $T\setminus S=(\widetilde T\setminus \widetilde S)\cup\{v_k\}$.
Going the other way,
suppose the edges from $v_k$ of a spanning tree $T$ are $f_{j_1},\ldots,
f_{j_q}$, and that these are all contained  in an edge set $S$. 
Let $\widetilde e_q,\ldots, \widetilde e_{\ell-2}$ denote the remaining
edges in $S$.
Then $T\cap \widetilde G$ has exactly $q$ connected components in 1-1
correspondence with the $w_{j_i}$. 
Find $\widetilde e_1, \ldots,
\widetilde e_{q-1}$ in $\widetilde G$ connecting these components of
$w_{i_j}$'s,  in a manner exactly the same as in Case 1. We then set
\begin{align*}
    \widetilde T&=(T\cap \widetilde G)\cup \widetilde e_1\cup \cdots\cup
    \widetilde e_{q-1}\ , \\
    \widetilde S&=\{\widetilde e_1,\ldots, \widetilde e_{\ell-2}\}\ .
\end{align*}
This is inverse to the previous construction. Thus, we have  a  correspondence between terms in \eqref{eqn:main-equation} 
with  $f_j\not\in T\setminus S$, $j=1,\ldots, m$,  and terms in 
\eqref{eqn:hat} with no $\widetilde \delta_j$'s. 

We now address assumptions (i) and (ii). Suppose that $G\setminus\{v_k\}$
is not connected. Notice that the right hand side of 
\eqref{eqn:hat} is multiplicative and that extending spanning trees of
each component of $G\setminus\{v_k\}$ to include $v_k$ uniquely determines
a spanning tree of $G$. Hence, since  the analysis above applies to each component assumption (i) may be dropped. For (ii), if
one of the points, e.g.\ $w_1$, is marked in $G$, then after relabelling we
may assume $w_1=v_{k-1}$. Then
$$
\widetilde\delta(w_1)=\delta(v_{k-1})+\omega(f_1)\ ,
$$
and $\widetilde G$ has $k+m-2$ marked points. In the expression
\eqref{eqn:hat}, terms involving $\widetilde \delta(w_1)$ split into terms
with  $\delta(v_{k-1})$ and those with $\omega(f_1)$.
The latter correspond  to terms in 
\eqref{eqn:main-equation} with $v_{k-1}$ unmarked, just as in the cases
considered above. The terms involving $\delta(v_{k-1})$
correspond  to terms in 
\eqref{eqn:main-equation} with $v_{k-1}$ marked. In this case, in the
definition of $T$, we include $f_1$ in the set $S$, but otherwise proceed
as above. 
This completes the proof.

\section*{Acknowledgements} The author is grateful to the editors for
putting together this special volume. He  also thanks Jean Lagac\'e
and David Sher for their comments. 

$\hbox{}$

\bibliographystyle{plain}

\bibliography{../papers}

\providecommand{\MR}[1]{}
\begin{thebibliography}{10}

\bibitem{Alvarez:83}
Orlando Alvarez.
\newblock Theory of strings with boundaries: fluctuations, topology and quantum
  geometry.
\newblock {\em Nuclear Phys. B}, 216(1):125--184, 1983.

\bibitem{Burger:90}
Marc Burger.
\newblock Small eigenvalues of {R}iemann surfaces and graphs.
\newblock {\em Math. Z.}, 205(3):395--420, 1990.

\bibitem{BFK:92}
D.~Burghelea, L.~Friedlander, and T.~Kappeler.
\newblock Meyer-{V}ietoris type formula for determinants of elliptic
  differential operators.
\newblock {\em J. Funct. Anal.}, 107(1):34--65, 1992.

\bibitem{SteklovSurvey}
Bruno Colbois, Alexandre Girouard, Carolyn Gordon, and David Sher.
\newblock Some recent developments on the {S}teklov eigenvalue problem.
\newblock 2022.
\newblock In preparation.

\bibitem{DPRS:87}
Jozef Dodziuk, Thea Pignataro, Burton Randol, and Dennis Sullivan.
\newblock Estimating small eigenvalues of {R}iemann surfaces.
\newblock In {\em The legacy of {S}onya {K}ovalevskaya ({C}ambridge, {M}ass.,
  and {A}mherst, {M}ass., 1985)}, volume~64 of {\em Contemp. Math.}, pages
  93--121. Amer. Math. Soc., Providence, RI, 1987.

\bibitem{Edward:93}
Julian Edward.
\newblock Pre-compactness of isospectral sets for the {N}eumann operator on
  planar domains.
\newblock {\em Comm. Partial Differential Equations}, 18(7-8):1249--1270, 1993.

\bibitem{EdwardWu:91}
Julian Edward and Siye Wu.
\newblock Determinant of the {N}eumann operator on smooth {J}ordan curves.
\newblock {\em Proc. Amer. Math. Soc.}, 111(2):357--363, 1991.

\bibitem{Forman:87}
Robin Forman.
\newblock Functional determinants and geometry.
\newblock {\em Invent. Math.}, 88(3):447--493, 1987.

\bibitem{FriedlanderGuillemin:08}
Leonid Friedlander and Victor Guillemin.
\newblock Determinants of zeroth order operators.
\newblock {\em J. Differential Geom.}, 78(1):1--12, 2008.

\bibitem{GPPS:14}
Alexandre Girouard, Leonid Parnovski, Iosif Polterovich, and David~A. Sher.
\newblock The {S}teklov spectrum of surfaces: asymptotics and invariants.
\newblock {\em Math. Proc. Cambridge Philos. Soc.}, 157(3):379--389, 2014.

\bibitem{GP:17}
Alexandre Girouard and Iosif Polterovich.
\newblock Spectral geometry of the {S}teklov problem (survey article).
\newblock {\em J. Spectr. Theory}, 7(2):321--359, 2017.

\bibitem{GuillarmouGuillope:07}
Colin Guillarmou and Laurent Guillop\'{e}.
\newblock The determinant of the {D}irichlet-to-{N}eumann map for surfaces with
  boundary.
\newblock {\em Int. Math. Res. Not. IMRN}, (22):Art. ID rnm099, 26, 2007.

\bibitem{HenkinMichel:07}
Gennadi Henkin and Vincent Michel.
\newblock On the explicit reconstruction of a {R}iemann surface from its
  {D}irichlet-{N}eumann operator.
\newblock {\em Geom. Funct. Anal.}, 17(1):116--155, 2007.

\bibitem{Jollivet:14}
Alexandre Jollivet and Vladimir Sharafutdinov.
\newblock On an inverse problem for the {S}teklov spectrum of a {R}iemannian
  surface.
\newblock In {\em Inverse problems and applications}, volume 615 of {\em
  Contemp. Math.}, pages 165--191. Amer. Math. Soc., Providence, RI, 2014.

\bibitem{Jollivet:18}
Alexandre Jollivet and Vladimir Sharafutdinov.
\newblock Steklov zeta-invariants and a compactness theorem for isospectral
  families of planar domains.
\newblock {\em J. Funct. Anal.}, 275(7):1712--1755, 2018.

\bibitem{Keen:74}
Linda Keen.
\newblock Collars on {R}iemann surfaces.
\newblock In {\em Discontinuous groups and {R}iemann surfaces ({P}roc. {C}onf.,
  {U}niv. {M}aryland, {C}ollege {P}ark, {M}d., 1973)}, pages 263--268. Ann. of
  Math. Studies, No. 79, 1974.

\bibitem{Khuri:91}
Hala~Halim Khuri.
\newblock Heights on the moduli space of {R}iemann surfaces with circle
  boundaries.
\newblock {\em Duke Math. J.}, 64(3):555--570, 1991.

\bibitem{Kim:08}
Young-Heon Kim.
\newblock Surfaces with boundary: their uniformizations, determinants of
  {L}aplacians, and isospectrality.
\newblock {\em Duke Math. J.}, 144(1):73--107, 2008.

\bibitem{Kirchhoff}
G.~Kirchhoff.
\newblock {\"U}ber die {A}ufl\"osung der {G}leichungen, auf welche man bei der
  untersuchung der linearen verteilung galvanischer {S}tr\"ome gef\"uhrt wird.
\newblock {\em Ann. Phys. Chem.}, 72:497--508, 1847.

\bibitem{KontsevichVishik:93}
Maxim Kontsevich and Simeon Vishik.
\newblock Geometry of determinants of elliptic operators.
\newblock In {\em Functional analysis on the eve of the 21st century, {V}ol.\ 1
  ({N}ew {B}runswick, {NJ}, 1993)}, volume 131 of {\em Progr. Math.}, pages
  173--197. Birkh\"auser, 1993.

\bibitem{LassasUhlmann:01}
Matti Lassas and Gunther Uhlmann.
\newblock On determining a {R}iemannian manifold from the
  {D}irichlet-to-{N}eumann map.
\newblock {\em Ann. Sci. \'{E}cole Norm. Sup. (4)}, 34(5):771--787, 2001.

\bibitem{OPS:89}
B.~Osgood, R.~Phillips, and P.~Sarnak.
\newblock Moduli space, heights and isospectral sets of plane domains.
\newblock {\em Ann. of Math. (2)}, 129(2):293--362, 1989.

\bibitem{RaySinger:73}
D.~B. Ray and I.~M. Singer.
\newblock Analytic torsion for complex manifolds.
\newblock {\em Ann. of Math. (2)}, 98:154--177, 1973.

\bibitem{SWY:80}
R.~Schoen, S.~Wolpert, and S.~T. Yau.
\newblock Geometric bounds on the low eigenvalues of a compact surface.
\newblock In {\em Geometry of the {L}aplace operator ({P}roc. {S}ympos. {P}ure
  {M}ath., {U}niv. {H}awaii, {H}onolulu, {H}awaii, 1979)}, Proc. Sympos. Pure
  Math., XXXVI, pages 279--285. Amer. Math. Soc., Providence, R.I., 1980.

\bibitem{Seeley:69}
R.~Seeley.
\newblock The resolvent of an elliptic boundary problem.
\newblock {\em Amer. J. Math.}, 91:889--920, 1969.

\bibitem{Seeley:67}
R.~T. Seeley.
\newblock Complex powers of an elliptic operator.
\newblock In {\em Singular {I}ntegrals ({P}roc. {S}ympos. {P}ure {M}ath.,
  {C}hicago, {I}ll., 1966)}, pages 288--307. Amer. Math. Soc., Providence,
  R.I., 1967.

\bibitem{Tutte:01}
W.~T. Tutte.
\newblock {\em Graph theory}, volume~21 of {\em Encyclopedia of Mathematics and
  its Applications}.
\newblock Cambridge University Press, Cambridge, 2001.
\newblock With a foreword by Crispin St. J. A. Nash-Williams, Reprint of the
  1984 original.

\bibitem{Weisberger:87}
William~I. Weisberger.
\newblock Conformal invariants for determinants of {L}aplacians on {R}iemann
  surfaces.
\newblock {\em Comm. Math. Phys.}, 112(4):633--638, 1987.

\bibitem{Wentworth:91b}
Richard~A. Wentworth.
\newblock Asymptotics of determinants from functional integration.
\newblock {\em J. Math. Phys.}, 32(7):1767--1773, 1991.

\bibitem{Wentworth:08}
Richard~A. Wentworth.
\newblock Precise constants in bosonization formulas on {R}iemann surfaces.
  {I}.
\newblock {\em Comm. Math. Phys.}, 282(2):339--355, 2008.

\bibitem{Wentworth:12}
Richard~A. Wentworth.
\newblock Gluing formulas for determinants of {D}olbeault {L}aplacians on
  {R}iemann surfaces.
\newblock {\em Comm. Anal. Geom.}, 20(3):455--499, 2012.

\bibitem{Wolpert:87}
Scott~A. Wolpert.
\newblock Asymptotics of the spectrum and the {S}elberg zeta function on the
  space of {R}iemann surfaces.
\newblock {\em Comm. Math. Phys.}, 112(2):283--315, 1987.

\bibitem{Wolpert:90}
Scott~A. Wolpert.
\newblock The hyperbolic metric and the geometry of the universal curve.
\newblock {\em J. Differential Geom.}, 31(2):417--472, 1990.

\end{thebibliography}

 \end{document}